 \newtheorem{lemma}{Lemma}[section]
\newtheorem{proposition}[lemma]{Proposition}
\newtheorem{corollary}[lemma]{Corollary}
\newtheorem{theorem}[lemma]{Theorem}
\newtheorem{remark}[lemma]{Remark}
\newtheorem{definition}[lemma]{Definition}
\newtheorem{example}[lemma]{Example}
\begin{document}

\thispagestyle{empty}
\title{On polynomial representation of finite groups}
\author{\bf
  Lizhong Wang and Jiping Zhang}

\address{School of Mathematics\\
Peking University\\ Beijing 100871\\ P.R.China}
 \address{lwang@math.pku.edu.cn}
\address{jzhang@pku.edu.cn}

\keywords{partition algebra, definition equations of characters, Frobenius polynomial, degree polynomial, McKay conjecture\\
2020 Mathematics Subject Classifications:20C15,20C20}
\begin{abstract}
By generalizing Frobenius' polynomial method to good partition algebra, we will develop new character theories for a finite group $G$. A uniform defining equations are derived for these kinds of character theories. The new character theories leads to various factorizations of the group determinant. We will show that these new character theories are equivalent to the Frobenius polynomials of the correspondent good partition algebras. In particular, the character table of a finite group $G$ can be replaced by the Frobenius polynomial of $G$ which is a kind of degenerate of the group determinant. As applications, we  find a new series of invariants $p_{ijl}$ for a finite group. In particular, a finite simple group is determined by these invariants. As further applications, the degrees of all irreducible characters can also be realized as the solutions of a polynomial of $G$ and we can
combine the refined McKay conjecture and part of Galois conjecture of Navarro on degrees of irreducible characters into a new general conjecture.

\end{abstract}

\maketitle

\begin{center}
  \tableofcontents
\end{center}

\section{Introduction} \label{sec:intro}
The representation theory of finite groups was created by Frobenius in order to factor group determinant $\Theta_G(x)$ into irreducible factors over complex field. The characters was invented to derive the irreducible factors by formulae(see \cite{F1} and \cite{F2}). After that, it was realized that characters are also powerful in investigating the structure of finite groups. In fact, a finite group is determined completely by its group determinant or its  $1-$,$2-$ and $3$-characters (see \cite{FS} and \cite{HJ}). The theory of factorization of a group determinant has appeared in diverse areas. There are natural connections among the original character theory of Frobenius, geometry and analysis. The main purpose of this paper is to generalize character theory of Frobenius and  study representations by polynomials.

Assume that $\Theta_G(x)$ has the following factorization over complex field: $$\Theta_G(x)=\prod_{i=1}^s \Phi_i(x)^{e_i},$$ where $\Phi_1(x),\cdots,\Phi_s(s)$ are all co-prime irreducible factors and $e_i$ is the multiplicity of $\Phi_i(x)$. Let $f_i$ be the degree of $\Phi_i(x)$. If $g$ is not the identity of $G$, we denote the coefficient of the term $x_e^{f_i-1}x_g$ in $\Phi_i(x)$ by $\chi_i(g)$. If we set $\chi_i(1)=f_i$, $\chi_i:G \rightarrow \mathbb{C}$ is a function of $G$. It is called a character of $G$ by Frobenius and it is in fact an irreducible character $G$ over the field $\mathbb{C}$. It was shown in \cite{F1} and \cite{F2} that $s$ is equal to the number of conjugate classes of $G$ and $\chi_1,\cdots,\chi_s$ are all irreducible characters of $G$ over $\mathbb{C}$.

 A partition $\mathcal{P}=\cup_{i=1}^n\mathcal{P}_i$ of Cl($G$) is called good partition if it satisfies conditions in Definition \ref{def:goodpart}. The $\mathcal{P}_i$ is called a partition class. Assume that $\hat{C_i}\hat{C_j}=\sum_{l=1}^n a_{lij}\hat{C_l}$, where $\hat{C}_i=\sum_{g\in \mathcal{P}_i}g$. Given a good partition $\mathcal{P}=\cup_{i=1}^n \mathcal{P}_i$.  Then we can define a function $\chi_i^\mathcal{P}$ on $G$ for each $\Phi_i(x)$ such that  $\chi_i^\mathcal{P}(g)=\sum_{y\in \mathcal{P}_j}\chi_i(y)$, where $\mathcal{P}_j$ is the partition class to which $g$ belongs. These functions are called $\mathcal{P}$-characters. The $\mathcal{P}$-characters are new ones not super characters defined in \cite{A} and \cite{IN} as we have shown in Section \ref{sec:realization}. We can prove that these characters are determined by the following series of equations.

{\bf Theorem A.}{\it The $\mathcal{P}$-characters of  a good partition $\mathcal{P}$ of Cl($G$) are common roots of the following equations:
\[ x_ix_j=\frac{1}{\lambda_\mathcal{P}}x_1\sum_l a_{lij}x_l, 1\leq i,j,l\leq n,\] where $\lambda_\mathcal{P}$ is the identity constant of
$k\mathcal{P}$.}

The above defining equations can afford almost all known characters.
If we choose $\mathcal{P}$ to be the trivial partition with each partition class containing only one conjugate class of $G$, then above equations are definition equations of  characters defined by Frobenius in \cite{F1}.
 
The variety defined by the equations in Theorem A is called character variety with respect to the good partition and it is denoted by $V_\mathcal{P}(G)$. In fact, the variety $V_\mathcal{P}(G)$ is decided by the irreducible representations of the partition algebra $k\mathcal{P}$.
Given a good partition $\mathcal{P}={\mathcal{P}_1\cup\cdots\cup\mathcal{P}_n}$.  Let $\mathcal{L}:k\mathcal{P}\rightarrow M_n(k)(\hat{C}_j\mapsto \mathcal{A}_j)$ be the regular representation of $k\mathcal{P}$.
Set $\mathfrak{A}=\sum_{i=1}^n x_i\mathcal{A}_i=(\alpha_{ij})$.
Let $\gamma_1,\cdots,\gamma_n$ be all irreducible representations of $k\mathcal{P}$ and each $\gamma_t$ is written as $(\lambda_{1t},\cdots,\lambda_{nt})$. The representation variety $V_k(k\mathcal{P})$ of $k\mathcal{P}$ consists of irreducible representations and zero representation of $k\mathcal{P}$.

{\bf Theorem B} {\it The character variety $V_\mathcal{P}(G)$ consists of all multiples of elements in  $V_k(k\mathcal{P})$ i.e.
$V_\mathcal{P}(G)=\{\alpha\gamma|\alpha\in k, \gamma\in V_k(k\mathcal{P}) \}$. In particular, $V_\mathcal{P}(G)$  consists of $n$ irreducible components and the dimension of $V_\mathcal{P}(G)$ is $1$. }

The $\mathcal{P}$-characters are closely related to the decomposition of the group determinant. A formula to construct a power factor of the group determinant by its power function is presented in section \ref{sec:powerfactors}. We also give transfer formulae among $\mathcal{P}$-character,super characters and power functions. So we can construct a factor of the group determinant by a  $\mathcal{P}$-character.

 With defining equations, we can prove the orthogonal relations  and invariants of the $\mathcal{P}$-characters. In particular, we  give a formula for the multiplicity of the factors in group determinant.
Set $\gamma_t(x)=\sum_{i=1}^n \lambda_{it}x_i$ for $1\leq t \leq n$. Assume that $x_A=x_B$ in $\Theta_G(x)$ whenever $A$ and $B$ belong to the same partition class of $\mathcal{P}$. Then  $\Theta_G(x)=\prod_{i=1}^n\gamma_i^{m_i}(x)$, where $m_i$ is a nonnegative integer. We call these $m_is$ squares of the degrees  of characters. In the case $\mathcal{P}$ is the trivial partition of Cl($G$), these $m_is$ are exactly  squares of degrees of irreducible character of $G$ indeed.
Let $\varsigma_1,\cdots,\varsigma_n$ be all solutions of the following equations:
\begin{equation*}\label{equ:orthogonalM} x_{i}\gamma_t(x)=\sum_ja_{ij}(x)x_{j}, \leqno(2)\end{equation*} where $a_{ij}(x)$ is the element on $(ij)$-entry of $\mathfrak{A}$ and $1\leq t \leq n$. Let $\mathcal{P}_{j'}$ consists of all inverse of elements in $\mathcal{P}_j$. Define $p_{ij}$ to be the number of solutions  of the equation \begin{equation*}\label{def:commutators} xy=g_i\mbox{ or $g_j$}\end{equation*} where
$x\in \mathcal{P}_l$,$y\in \mathcal{P}_{l'}$ for  $1\leq l \leq n$ and $g_i$ and $g_j$ are representatives of $\mathcal{P}_{i}$ and $\mathcal{P}_{i}$ respectively.

{\bf Theorem C.}{\it
The polynomial $D_\mathcal{P}(x)=|xI-Diag(\ell\ell_1,\cdots,\ell\ell_n)M\mathcal{SS}'|$ has the following factorization:
$D_\mathcal{P}(x)=(x-m_1)\cdots(x-m_n)$, where $M$ the permutation matrix such that $(p_{ij'})M=(p_{ij})$ and $\mathcal{S}=(\varsigma_1,\cdots,\varsigma_n)$.
 }

 The polynomial $D_\mathcal{P}(x)$ is called the degree polynomial associate with $\mathcal{P}$. Let $k$ be a subfield of $\mathbb{C}$. Let $\xi$ be $m$-th primitive root of unitary. Then $k(\xi)$ is a splitting field of $G$ and its subgroups whenever $m$ is the exponent of $G$.   For any $\sigma\in \mbox{Gal}(k(\xi)/k)$, there exists a unique $t \in \mathbb{Z}/m\mathbb{Z}$ such that $\sigma(\omega)=\omega^t$ for such $\omega$ with $\omega^m=1$. Two elements $x,y \in G$ are called $k$-conjugate if $x$ is conjugate to $y^t$, where $t$ is determined by some $\sigma\in \mbox{Gal}(k(\xi)/k)$.
Let $\mathcal{P}_1,\cdots,\mathcal{P}_n$ be all $k$-classes of $G$ and let $\hat{{C}}_i$ is the sum of elements in $\mathcal{P}_i$. Then $\mathcal{P}=\cup_{i=1}^n \mathcal{P}_i$ is a good partition.   Consider $D_\mathcal{P}(x)$ as polynomial over a field $F$ with characteristic $p$. Then $\overline{D}_\mathcal{P}(x)=x^eD_G^{k,p'}(x)$, where $(x^e,D_G^{k,p'}(x))=1$. The polynomial $D_G^{k,p'}(x)$ is called $p'$-degree polynomial of $G$ with respect to $k$-classes and the prime $p$.

{\bf Conjecture D.} {\it Let $k$ be a field with characteristic zero and let $N$ be the normalizer of some Sylow $p$-subgroup of $G$. Then $D_G^{k,p'}(x)=D_N^{k,p'}(x)$.}

If $k$ is a splitting field of $G$, then $k$-classes are just conjugate classes of $G$. In this case, above conjecture is equivalent to the refined McKay conjecture in \cite{IN}. If Gal($k(\xi)/k)$) is a cyclic $p$-groups, then above conjecture is equivalent to Conjecture A in \cite{N}.
  In \cite{SL}, $D_G(x)$,$D_G^{k,p'}(x)$ and $D_{N}^{k,p'}(x)$ of $G$ are calculated out for prime divisors of $G$ in case that $k$ is splitting field of $G$, where $G$ is one of the following groups: $M_{11},M_{12},M_{22},M_{23},M_{24},Sz(8),Sz(32)$. By comparing $D_G^{k,p'}(x)$ and $D_{N}^{k,p'}(x)$, the  Isaacs-Navarro-McKay conjecture holds true for these groups.

Let $\mathfrak{L}$ be the regular representation of an algebra $\mathcal{A}$ over $k$. Let $\alpha_1,\cdots,\alpha_s$ be a basis of $\mathcal{A}$. Then the matrix $\lambda I-\sum_i x_i\mathfrak{L}(\alpha_i)$ is similar to  \[\mbox{ Diag}(\epsilon_1(\lambda,x_1,\cdots,x_r),\cdots,\epsilon_r(\lambda,x_1,\cdots,x_r)),\] where $\epsilon_i(\lambda,x_1,\cdots,x_r)$ is polynomial and $\epsilon_i(\lambda,x_1,\cdots,x_r)\mid \epsilon_{i+1}(\lambda,x_1,\cdots,x_r)$. The polynomial $\epsilon_r(0,x_1,\cdots,x_r)$ is called the 
norm form of $\mathcal{A}$. Given an element $a=\sum_{i}l_i\alpha_i\in \mathcal{A}$. Then $|\lambda I-\sum_i l_i\mathfrak{L}(\alpha_i)|$ is called the characteristic polynomial of $a$.
 The norm form of the partition algebra $k\mathcal{P}$ is called Frobenius polynomial with the basis $\hat{C}_1,\cdots,\hat{C}_n$.
The Frobenius polynomial is equivalent to the character table. The advantage of the Frobenius polynomial is that the theory of forms can be applied on it.  For example, as a form, the Frobenius polynomial can be determined by its first three kinds of coefficients. We can prove that the first three kinds coefficients are  the generalized number $p_{ijl}$(see definition \ref{def:pijk}) of  commutators of $G$.

 {\bf Theorem E.} {\it  Let $\mathcal{P}$ be the trivial partition of Cl($G$). Then
\begin{enumerate}
\item[(i)]{The Frobenius polynomial $F_G(x)$ of $G$ and the ordinary characters table $T_G$   are determined by each other.}
\item[(ii)]{The Frobenius polynomial $F_G(x)$ is determined by the numbers $p_{ijl}$ where $1\leq i,j,l\leq n$.}
\end{enumerate} }

It is important to find which intrinsic constants of $G$ can determine the group $G$, especially in the case that $G$ is a finite simple group. There are also some conjectures on this direction such as Thompson conjecture and Huppert conjecture. As corollary of Theorem D, we find the generalized numbers of commutators can determine the normal series of $G$.

{\bf Corollary F.} {\it  Let $G$ and $H$ be finite groups. Let $p_{ijl}(G)$ be the generalized number of commutators  of $G$.
 \begin{itemize}
\item[(i).] {If  $p_{ijl}(G)=p_{ijl}(H)$ for all $1\leq i,j,l\leq n$ then $G$ and $H$ have the same chief factors with multiplicity;}
\item[(ii).] {A finite simple group $G$ is determined by $p_{ijl}(G)$, $1\leq i,j,l\leq n$, i.e. $G$ is isomorphic to $H$ if and only if $p_{ijl}(G)=p_{ijl}(H)$ for all $1\leq i,j,l\leq n$.}
 \end{itemize}}

Here is the outline of sections.
In section \ref{sec:Frobenius theory}, we will define good partitions of Cl($G$) and present some useful examples of good partitions. We will also realize all irreducible representations of the partition algebra $k\mathcal{P}$  into the common solutions of a series equations. In section \ref{sec:polyalg}, we introduce various polynomials of an algebra and different forms of these polynomials under different basis the algebra.  In section \ref{sec:defequ}, we will derive the uniform definition equations of various character theories. Many examples of simplifying the definition equations are also presented.  In section \ref{sec:realization}, the characters of the defining equations in section \ref{sec:defequ} are decided. We also give a comparing formula for super characters and $\mathcal{P}$-characters. In section \ref{sec:polyalg}, we give a formula to construct power factors by power function. In section \ref{sec:orthdegree}, we prove the orthogonal relations of $\mathcal{P}$-characters. We will also prove formulae of the degrees of $\mathcal{P}$-characters.
In section \ref{sec:FrobPoly}, we will define Frobenius polynomial for each partition algebra and show  the partition algebra is determined by its Frobenius polynomial.
We will also define the generalized number of commutators  and show how the numbers  determine the Frobenius polynomial. In section \ref{sec:degreepoly}, we focus on the ordinary characters corresponding to the trivial partition of Cl($G$).
We show that $F_G(x)$ is determined by the numbers of commutators. In particular, a finite simple group is determined by those numbers. A sufficient and necessary condition is proved for two groups having the same character table.  
In section \ref{sec:k-character}, we will use the degree polynomials for $k$-characters to combine various McKay-type conjectures on degrees of irreducible characters into a new conjecture. 


The notions of this paper are standard. We refer to \cite{I} and \cite{S} for character theory of finite groups and \cite{MA} for symmetric polynomials. For more on group determinants and representations, we refer to \cite{J}. The terminologies and notations for basic algebraic geometry are as the same as in \cite{G}.

{\bf Acknowledgement.} We are grateful to Jon F Carlson and Pham Huu Tiep for many corrections and helpful suggestions. We also thank Liu, Yanjun and Liao,pengcheng for reading through this paper.

\section{Partition algebras and its representations}   \label{sec:Frobenius theory}
In this section we will introduce partition algebras as subalgebra of the center of the group algebra $kG$ and give some useful examples of partition algebras.

Let $G$ be a finite group with $\mbox{Cl}(G)=\{\mathscr{C}_i\}_{i=1}^s$, where $\mathscr{C}_i$ is conjugate class of $G$ for $1\leq i\leq s$. Let $\hat{\mathscr{C}}_i=\sum_{g\in \mathscr{C}_i}g$. Let $\mathscr{C}_{i'}=\{g^{-1}|g\in {\mathscr{C}}_{i}\}$.
Given a partition $\mathcal{P}={\mathcal{P}_1\cup\cdots\cup\mathcal{P}_n}$ of  $\mbox{Cl}(G)$, where 
$\mathcal{P}_j=\{\mathscr{C}_{j_1},\cdots,\mathscr{C}_{j_l}\},1\leq j\leq r$. We call $\mathcal{P}_i$ a partition class of $\mathcal{P}$. Let $\mathcal{P}_{j'}=\{\mathscr{C}_{j'_1},\cdots,\mathscr{C}_{j'_l}\}$. We denote the sum $\sum_{\mathscr{C}_{j_s}\in \mathcal{P}_j}\hat{\mathscr{C}}_{j_s}$ of $\mathcal{P}_j$ by $\hat{C_j}$.

\begin{definition}\label{def:goodpart}
A partition $\mathcal{P}={\mathcal{P}_1\cup\cdots\cup\mathcal{P}_n}$ of a subset $\mathcal{C}$ of $\mbox{Cl}(G)$ with the identity $1$ belonging to $\mathcal{P}_1$ is called a good partition if
\begin{enumerate}
\item{$\mathcal{P}_{j'}$ is a partition class of $\mathcal{P}$ for $1\leq j\leq n$,}
\item{There exist elements $b_{ij\ell}$ in $\mathbb{Z}$ such that $\hat{{C}_j}\hat{{C}_l}=\sum_{i=1}^n b_{ijl}\hat{{C}_i}$ for $1\leq j,l\leq n$,}
\item{there is an identity in the algebra $k\mathcal{P}=\oplus_{i=1}^nk\hat{{C}_i}$.}
\end{enumerate}
The algebra  $k\mathcal{P}=\oplus_{i=1}^nk\hat{{C}_i}$ is called a partition algebra with respect to $\mathcal{P}$.
If there exists $\lambda_\mathcal{P}\in k$ such that $\lambda_{\mathcal{P}}\hat{C}_1$ is the identity of $k\mathcal{P}$, then we say that $\lambda_{P}$ is the identity constant of $k\mathcal{P}$.
\end{definition}

\begin{example}\label{exa:goodpart2}

 The trivial partition $\mathcal{P}=\{\{\mathscr{C}_1\},\cdots,\{\mathscr{C}_s\}\}$ is a good partition. The partition algebra with respect to this partition is the center of $kG$. The identity is the identity of the group algebra $kG$. This partition is called the trivial partition.
\end{example}

\begin{example}
 Let $N$ be a normal subgroup $G$. Let $\mbox{Cl($G/N$)}=\{\mathcal{\overline{C}}_i\}_{i=1}^t$. Then $\mathcal{P}=\{\mathcal{C}_iN\}_{i=1}^t$ is a good partition whose identity of the partition algebra is $\frac{1}{|N|}\hat{N}$ and it is isomorphic to the center of $k(G/N)$.
\end{example}

\begin{example}\label{exa:goodpart1}
Let $N$ be a normal subgroup of $G$. Then there exist a subset $\mathcal{C}=\{\mathscr{C}_1,\cdots,\mathscr{C}_r\}$ of Cl($G$) such that $N=\cup_{i=1}^r \mathscr{C}_i$. Then $\mathcal{C}$ is a good partition of Cl($N$) and $k\hat{C}_1+\cdots+k\hat{C}_r$ is a partition algebra.
\end{example}

\begin{example}\cite{A}\cite{DI}
  Let $G=U_n(F)$ be the group of $n \times n$ unimodular upper triangular matrices
over a finite field $F$ of characteristic $p$. Let $J$ be the algebra of strictly upper triangular $n\times n$ matrices over $F$.
Then all subsets of the form $1 + GxG$ consist of a good partition of Cl($G$).
\end{example}

Next we will give a good partition of Cl($G$) with respect to a subfield $k$ of $\mathbb{C}$.

Let $k$ be a subfield of $\mathbb{C}$. Let $K=k(\xi)$ be an extension field of $k$ by adjoining primitive $m$-th root $\xi$ of unity to $F$, where $m=\mbox{exp}(G)$. Then $K$ is a splitting field of $G$ and its subgroups. For any $\sigma\in \mbox{Gal}(K/k)$, there exists a unique $t \in \mathbb{Z}/m\mathbb{Z}$ such that $\sigma(\omega)=\omega^t$ for such $\omega$ with $\omega^m=1$(see \cite{BO}). Hence each element  $\sigma\in \mbox{Gal}(K/k)$ can define a map on $G$  by $g^\sigma=g^t$. In this way $\mbox{Gal}(K/k)$ acts on $G$ as permutations. Two elements $x,y \in G$ are called $\mbox{Gal}(K/k)$-conjugate if $x$ is conjugate to $y^\sigma$ for some $\sigma\in \mbox{Gal}(K/k)$, i.e. $gxg^{-1}=y^\sigma$ for some $g\in G$. A $\mbox{Gal}(K/k)$-conjugate class is called a $F$-class of $G$. Let $\mathcal{P}_1,\cdots,\mathcal{P}_n$ be all $F$-classes of $G$ and let $\hat{{C}}_i$ is the sum of elements in $\mathcal{P}_i$.

Let $Irr_K(G)=\{\chi_1,\cdots,\chi_s\}$ be the set of irreducible characters of $G$ over $K$. Let $e_i=\frac{\chi_i(1)}{|G|}\sum_{j=1}^s\chi_i(g_j^{-1})\hat{C}_j$ be the central primitive idempotent in $Z(KG)$ corresponding to $\chi_i$. Then $\mbox{Gal}(K/k)$ acts on $\{\chi_1,\cdots,\chi_s\}$ and $\{e_1,\cdots,e_s\}$. For any $\sigma\in \mbox{Gal}(K/k)$, $\chi_i^\sigma=\chi_i$ if and only if $e_i^\sigma=e_i$. Let $\mathcal{O}_1,\cdots,\mathcal{O}_\nu$ be the orbits of $\mbox{Gal}(K/k)$ on $\{e_1,\cdots,e_s\}$. Then $\{\varepsilon_j=\sum_{e_\ell\in \mathcal{O}_j}e_\ell\}_{j=1}^\nu$ are all central primitive idempotents of $Z(kG)$.

Since $\mbox{Gal}(K/k)$ is abelian, the inverse map  is an automorphism of $\mbox{Gal}(K/k)$. Define the action of $\mbox{Gal}(K/k)$ on $G$ by $g^\sigma= g^{t'}$, where $g\in G$, $\sigma\in \mbox{Gal}(K/k)$ and $t'$ is the unique element in $\mathbb{Z}/m\mathbb{Z}$ corresponding to $\sigma^{-1}$. With this action on Cl$(G)$, we have  $\chi_i^\sigma({g_j}^\sigma)=\chi_i(g_j)$. By Brauer's permutation lemma, the numbers of orbits of the actions of $\mbox{Gal}(K/k)$ on  $Irr_K(G)$ and Cl$(G)$ are equal,  i.e. $n=\nu$.

\begin{proposition}
Let $\varepsilon_k(G)=k\varepsilon_1\oplus \cdots \oplus k\varepsilon_n$ be the subalgebra of $Z(KG)$. Then $\hat{{C}}_1,\cdots, \hat{{C}}_n$ is a basis of $\varepsilon_k(G)$.

\end{proposition}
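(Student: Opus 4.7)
The plan is to exhibit $\hat{C}_1,\ldots,\hat{C}_n$ as $n$ linearly independent elements of the $n$-dimensional $k$-space $\varepsilon_k(G) = k\varepsilon_1 \oplus \cdots \oplus k\varepsilon_n$. That $\dim_k \varepsilon_k(G) = n$ is clear because the $\varepsilon_j$ are nonzero mutually orthogonal idempotents in $Z(KG)$, and the equality $n = \nu$ is already furnished by the Brauer permutation lemma argument immediately preceding the statement. Linear independence of the $\hat{C}_i$ is immediate: distinct $F$-classes $\mathcal{P}_i$ are pairwise disjoint subsets of $G$, so the $\hat{C}_i$ have pairwise disjoint support in the group basis of $kG$.

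The substance of the argument is the containment $\hat{C}_i \in \varepsilon_k(G)$. I would work inside $Z(KG) = \bigoplus_{\ell=1}^s K e_\ell$ and expand
\[
    \hat{C}_i = \sum_{\ell=1}^s \omega_\ell(\hat{C}_i)\, e_\ell,
\]
where $\omega_\ell(z) = \chi_\ell(z)/\chi_\ell(1)$ is the central character attached to $\chi_\ell$. Showing that the scalar $\omega_\ell(\hat{C}_i)$ both lies in $k$ and is constant as $e_\ell$ ranges over each orbit $\mathcal{O}_j$ will suffice: setting $\beta_{ij}$ equal to this common value yields $\hat{C}_i = \sum_j \beta_{ij} \varepsilon_j$ with $\beta_{ij} \in k$, hence $\hat{C}_i \in \varepsilon_k(G)$.

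Both properties will follow from the standard Galois identity $\chi_\ell(g^\sigma) = \sigma(\chi_\ell(g))$ (where $\sigma(\zeta) = \zeta^t$ and $g^\sigma = g^t$) together with the defining property of an $F$-class: $\mathcal{P}_i$ is Gal-stable as a subset of $G$. Reindexing the sum $\chi_\ell(\hat{C}_i) = \sum_{g \in \mathcal{P}_i} \chi_\ell(g)$ via $g \mapsto g^\sigma$ and applying the identity gives $\sigma(\chi_\ell(\hat{C}_i)) = \chi_\ell(\hat{C}_i)$, so $\omega_\ell(\hat{C}_i) \in k$; applying the same manipulation with $\chi_\ell$ replaced by $\chi_\ell^\sigma = \chi_{\sigma(\ell)}$ yields $\omega_{\sigma(\ell)}(\hat{C}_i) = \sigma(\omega_\ell(\hat{C}_i))$, which by the first fact equals $\omega_\ell(\hat{C}_i)$. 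The only mild nuisance is keeping straight the two related $\mbox{Gal}(K/k)$-actions on $G$ introduced in the excerpt ($g \mapsto g^t$ and the twist $g \mapsto g^{t'}$ coming from $\sigma^{-1}$); they induce the same orbits on $\mbox{Cl}(G)$, so the notion of $F$-class and the Gal-stability of each $\mathcal{P}_i$ are unambiguous, and this is more bookkeeping than obstacle.
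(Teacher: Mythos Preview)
Your argument is correct, and it proves the same equality of $k$-spaces as the paper does, but from the opposite direction. The paper shows the containment $\varepsilon_i \in \bigoplus_j k\hat{C}_j$ by appealing to the Schur-index machinery in Nagao--Tsushima and to a rationality corollary in Serre to conclude that the orbit sum $\chi_{i_1}+\cdots+\chi_{i_\tau}$ is constant on $F$-classes; then it finishes by the same dimension count. You instead show $\hat{C}_i \in \bigoplus_j k\varepsilon_j$ by expanding $\hat{C}_i$ in the idempotent basis of $Z(KG)$ and checking directly, via the elementary identity $\chi_\ell(g^\sigma)=\sigma(\chi_\ell(g))$ and the Galois-stability of $\mathcal{P}_i$, that each coefficient $\omega_\ell(\hat{C}_i)$ lies in $k$ and is constant along the orbit $\mathcal{O}_j$. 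Your route is more self-contained---it avoids the external references to Schur indices and to Serre entirely---at the cost of a small amount of extra bookkeeping with the Galois action; the paper's route is shorter on the page but leans on heavier imported results.
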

\begin{proof}
Let $\mathcal{O}_i=\{\chi_{i_1},\cdots,\chi_{i_\tau}\}$. Then $\chi_{i_1}(1)=\cdots=\chi_{i_\tau}(1)$ and $m_{\chi_{i_1}}=\cdots=m_{\chi_{i_\tau}}=m_i$, where $m_{\chi_{i_j}}$ is the Schur index of $\chi_{i_j}$. Furthermore, we have
\begin{eqnarray*}
\varepsilon_i&=&e_{i_1}+\cdots+e_{i_\tau}\\
&=&\frac{\chi_{i_1}(1)}{|G|}\sum_{j=1}^n (\chi_{i_1}+\cdots+\chi_{i_\tau})(g_j^{-1})\hat{C}_j.
\end{eqnarray*}
By Theorem 6.2 of Chapter 2 in \cite{NT},  $m_i(\chi_{i_1}+\cdots+\chi_{i_\tau})$ is afforded by an irreducible module $\tilde{V}$ of $G$ over $F$. By Corollary 2 of Chapter 12.4 in \cite{S}, $m_i(\chi_{i_1}+\cdots+\chi_{i_\tau})$ is constant on $F$-classes of $G$, so does $(\chi_{i_1}+\cdots+\chi_{i_\tau})$. This implies that \[\varepsilon_i=\sum_{i=1}^n \ell_i\hat{{C}}_i,\] where $\ell_i\in k$.
So $\varepsilon_i(i=1,\cdots,\nu)$ is in the vector space $\oplus_{i=1}^n k\hat{{C}}_i$. By comparing the dimension, we have $\varepsilon_k(G)=\oplus_{i=1}^n k\hat{{C}}_i$.

\end{proof}

\begin{corollary}\label{lem:kclass}
For $1\leq i,j \leq n$, we have
\[\hat{{C}}_i\hat{{C}}_j=\sum_{l=1}^s{a}_{l i j}\hat{{C}}_l, \] where ${a}_{l i j}$ are rational integers.

\end{corollary}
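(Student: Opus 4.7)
The plan is to combine the preceding proposition, which gives $\varepsilon_k(G) = \bigoplus_{i=1}^n k\hat{C}_i$, with the classical integrality of the structure constants of $Z(KG)$ in the conjugacy class basis. Since $\hat{C}_i\hat{C}_j$ lies in the algebra $\varepsilon_k(G)$, it can certainly be written $\sum_\ell a_{\ell i j}\hat{C}_\ell$ for \emph{some} $a_{\ell i j}\in k$; the only content of the corollary is the integrality of these coefficients.

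To obtain this, I would first recall that each $F$-class sum $\hat{C}_i$ is by definition $\sum_{\mathscr{C}_p\subseteq \mathcal{P}_i}\hat{\mathscr{C}}_p$, a $\mathbb{Z}$-linear combination of ordinary conjugacy class sums. Since the ordinary class sums satisfy $\hat{\mathscr{C}}_p\hat{\mathscr{C}}_q = \sum_k c_{kpq}\hat{\mathscr{C}}_k$ with $c_{kpq}\in \mathbb{Z}_{\ge 0}$ (the usual class multiplication coefficients, which count pairs in $G$), expanding bilinearly gives
\[
\hat{C}_i\hat{C}_j \;=\; \sum_{k=1}^{s} n_{k}\,\hat{\mathscr{C}}_k \qquad\text{with } n_k\in\mathbb{Z}.
\]

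Now I would invoke the proposition: the left-hand side lies in $\varepsilon_k(G)$, hence also admits the unique expansion $\sum_{\ell=1}^n a_{\ell i j}\hat{C}_\ell = \sum_{\ell=1}^{n}\sum_{\mathscr{C}_k\subseteq \mathcal{P}_\ell} a_{\ell i j}\hat{\mathscr{C}}_k$. Comparing the two expressions in the basis $\{\hat{\mathscr{C}}_1,\dots,\hat{\mathscr{C}}_s\}$ of $Z(KG)$ and using the linear independence of the conjugacy class sums, I get $a_{\ell i j} = n_k$ for every $k$ with $\mathscr{C}_k\subseteq\mathcal{P}_\ell$. In particular $a_{\ell i j}\in\mathbb{Z}$, which is the desired conclusion. (As a byproduct, all conjugacy classes lying in the same $F$-class $\mathcal{P}_\ell$ appear with equal multiplicity in the product, which is the consistency check needed for the expansion to even live in $\varepsilon_k(G)$.)

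There is no real obstacle here: the only nontrivial ingredient is the preceding proposition, which already does the work of identifying $\bigoplus k\hat{C}_i$ as a subalgebra of $Z(KG)$. If anything subtle arises, it is the double role of the $\hat{C}_i$: they are a basis of $\varepsilon_k(G)$ (giving coefficients $a_{\ell i j}\in k$) and simultaneously $\mathbb{Z}$-combinations of the $\hat{\mathscr{C}}_p$ (forcing those same coefficients into $\mathbb{Z}$). The argument is just the comparison of these two viewpoints, and the bound $s$ in the statement should read $n$.
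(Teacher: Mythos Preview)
Your proposal is correct and follows essentially the same approach as the paper: use the preceding proposition to get an expansion $\hat C_i\hat C_j=\sum_\ell a_{\ell ij}\hat C_\ell$ with $a_{\ell ij}\in k$, then observe that expanding the product in the group algebra (the paper phrases it as ``the times of an element of $\hat C_\ell$ appearing in the product $\hat C_i\hat C_j$'') forces these coefficients to be nonnegative integers. Your version is slightly more explicit in routing through the ordinary class-sum structure constants, and your remark that the upper index $s$ should be $n$ is correct.
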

\begin{proof}
Since $\hat{{C}}_i,\hat{{C}}_j$ are in the subalgebra $\varepsilon_k(G)$ and $\hat{{C}}_1,\cdots, \hat{{C}}_s$ is a basis of $\varepsilon_k(G)$, it follows that
\[\hat{{C}}_i\hat{{C}}_j=\sum_{l=1}^s{l}_{l i j}\hat{{C}}_l, \] where ${l}_{l i j}$ is in $F$. It follows from that the number of time that an element of $\hat{{C}}_l$ appears in the product $\hat{{C}}_i\hat{{C}}_j$ must be rational integer.
\end{proof}

\begin{example}
The $k$-classes $\mathcal{P}_1,\cdots,\mathcal{P}_n$  consist of a good partition of Cl($G$) by Corollary \ref{lem:kclass}.
\end{example}


In the next, we will
realize the irreducible representations of $k\mathcal{P}$ as the points of an affine variety. This variety is used to determine character variety in  section \ref{sec:realization}.

Let $\mathcal{A}$ be a commutative algebra over $k$. Assume that $\mathfrak{a}_1,\cdots,\mathfrak{a}_n$ is a basis of $\mathcal{A}$ such that $\mathfrak{a}_i\mathfrak{a}_j=\sum_{l=1}^na_{lij}\mathfrak{a}_l$. Define matrices $\mathcal{A}_j=(a_{lij})_{1\leq l,i\leq n}$ for $j=1,\cdots,n$. Then $\mathfrak{L}:\mathcal{A}\rightarrow M_n(k)$ is the regular representation of $\mathcal{A}$, where $\mathfrak{L}(\mathfrak{a}_j)=\mathcal{A}_j$.
If $k$ is large enough then  for $1\leq i \leq n$ there exists a common invertible matrix $\mathcal{U}$   such that
\[\mathcal{U}\mathcal{A}_i\mathcal{U}^{-1}=\begin{pmatrix}
\lambda_{i1}& *& *&\cdots &*\\
0& \lambda_{i2}& *&\cdots &*\\
\cdot& \cdot& \cdot&\cdots &\cdot\\
0& 0& 0&\cdots &\lambda_{in}\\
\end{pmatrix}.\leqno{(*)}\]
Then the map $\mathfrak{r}_j:\mathcal{A}\longrightarrow k$  sending $\mathfrak{a}_i$ to $\lambda_{ij}$ is an irreducible representation of $\mathcal{A}$ for $1\leq j\leq n$.
Let $\mathfrak{P}$ be a matrix such that $\mathfrak{P}=(\mathfrak{p}_{ij})$ with $\mathfrak{p}_{ij}=\mbox{Tr}(\mathcal{A}_i\mathcal{A}_j)$. Define $\mathcal{R}$ to be the matrix with $\mathfrak{r}_i(\mathfrak{a}_j)$ in the $(ij)$-entry of $\mathcal{R}$. Then $\mathcal{R}'\mathcal{R}=\mathfrak{P}$.

Let $\gamma_t=(\lambda_{1t},\cdots,\lambda_{nt})$, where $1\leq t\leq n$. Since $\tau_t$ is a representation,
$\gamma_t$ satisfies the following series of equations:
\begin{equation}\label{equ:RegRep}x_jx_l=\sum_{i=1}^n a_{ijl} x_i,1\leq j,l\leq n.\end{equation}
The common solutions of these equations consist of an affine variety. It is called representation variety of $\mathcal{A}$ and we denote it by $V_k(\mathcal{A})$.

Let $\gamma_t(x)=\sum_{i=1}^n \lambda_{it}x_i$ for $1\leq t \leq n$.

\begin{theorem}\label{Property:A}
Let $\mathfrak{A}=\sum_{i=1}^n x_i\mathcal{A}_i=(\alpha_{ij})$, where $\alpha_{ij}=\sum_{l=1}^n a_{ijl}x_l$. Then
\begin{enumerate}
\item[(i)]{$\mid \mathfrak{A}\mid=\prod_{t=1}^n\gamma_t(x)=\prod_{t=1}^n(\sum_{i=1}^n\lambda_{it}x_i)$.}
\item[(ii)]{$\gamma_t(x)$ is the vector of eigenvalues of $\mathfrak{A}$ and $\gamma_t(x)\gamma_t=\gamma_t\mathfrak{A}$ for $1\leq t\leq n$.}
\item[(iii)]{$\gamma_1,\cdots,\gamma_n$ are determined by the following equations:
\begin{equation}\label{equ:charII} x_{it}\gamma_t(x)=\sum_j\alpha_{ji}x_{jt}.\end{equation} }
\end{enumerate}


\end{theorem}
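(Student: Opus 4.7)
The unifying observation is that the simultaneous triangularization $(*)$ says exactly that, for each $t$, the map $\tau_t\colon \mathfrak{a}_i \mapsto \lambda_{it}$ is a one-dimensional representation of $\mathcal{A}$. Applying $\tau_t$ to the structure identity $\mathfrak{a}_i\mathfrak{a}_j=\sum_l a_{lij}\mathfrak{a}_l$ yields the key relation
\[
\lambda_{it}\lambda_{jt}=\sum_{l=1}^n a_{lij}\lambda_{lt},\qquad 1\le i,j,t\le n, \tag{$\sharp$}
\]
and all four assertions fall out of $(\sharp)$ together with $(*)$ by a short calculation.

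\textbf{Part (i).} Since conjugation is linear in $\mathcal{A}_i$, the matrix $\mathcal{U}\mathfrak{A}\mathcal{U}^{-1}=\sum_i x_i\,\mathcal{U}\mathcal{A}_i\mathcal{U}^{-1}$ is upper triangular with $t$-th diagonal entry $\sum_i x_i\lambda_{it}=\gamma_t(x)$; as the determinant is a similarity invariant, $|\mathfrak{A}|=\prod_t \gamma_t(x)$. \textbf{Part (iv).} Substituting $x_i=\lambda_{it}$ into $x_jx_l=\sum_i a_{ijl}x_i$ produces precisely $(\sharp)$ (after relabelling indices), so $\gamma_t$ is a root of (\ref{equ:RegRep}). \textbf{Part (ii).} From Part (i), $\gamma_t(x)$ appears as a diagonal entry of $\mathcal{U}\mathfrak{A}\mathcal{U}^{-1}$ and so is an eigenvalue of $\mathfrak{A}$. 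The claim that $\gamma_t$ is a corresponding left eigenvector follows from a direct calculation using $(\sharp)$:
\[
(\gamma_t\mathfrak{A})_i=\sum_j \lambda_{jt}\alpha_{ji}=\sum_{j,l} a_{jil}\lambda_{jt}\,x_l \;\stackrel{(\sharp)}{=}\; \sum_l \lambda_{it}\lambda_{lt}\,x_l=\lambda_{it}\,\gamma_t(x),
\]
so $\gamma_t\mathfrak{A}=\gamma_t(x)\gamma_t$. \textbf{Part (iii).} Rewriting (\ref{equ:charII}) in vector form gives $(x_{1t},\ldots,x_{nt})\mathfrak{A}=\gamma_t(x)(x_{1t},\ldots,x_{nt})$, so any solution is a left eigenvector of $\mathfrak{A}$ with eigenvalue $\gamma_t(x)$; by Part (ii), $\gamma_t$ is one such. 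In the split semisimple setting of the preceding section, the characters $\tau_1,\ldots,\tau_n$ are pairwise distinct, so the polynomials $\gamma_t(x)$ are pairwise distinct and the associated left eigenspaces are one-dimensional, which determines $\gamma_t$ up to scalar.

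\textbf{Main obstacle.} Parts (i), (ii), and (iv) are essentially bookkeeping once $(\sharp)$ is on the table. The only genuinely non-formal step is the uniqueness in (iii), where one needs that the one-dimensional representations $\tau_t$ are pairwise distinct so that the common left eigenspaces of $\mathcal{A}_1,\ldots,\mathcal{A}_n$ are one-dimensional. The simultaneous triangularization $(*)$ by itself does not supply this; the argument is clean in the intended applications because the partition algebras $k\mathcal{P}$ are shown to be split semisimple in Section~\ref{sec:reppartalg}, at which point $\mathcal{U}$ can in fact be chosen to diagonalize all the $\mathcal{A}_i$ simultaneously and the distinctness of the characters is automatic.
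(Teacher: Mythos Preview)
Your treatment of (i) and (ii) matches the paper's, and your concern about (iii) is legitimate (the paper itself is light on the uniqueness there). The real gap is in (iv): the statement asserts that $\gamma_1,\ldots,\gamma_n$ are \emph{all} the roots of the system $x_jx_l=\sum_i a_{ijl}x_i$, i.e.\ not only that each $\gamma_t$ solves it (which is what your substitution argument shows) but that \emph{every} solution is one of the $\gamma_t$. You have not addressed this direction at all, and it is the nontrivial half.

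The paper's argument for this half is short and, notably, does \emph{not} require semisimplicity or distinctness of the $\tau_t$. Given any solution $\epsilon=(\epsilon_1,\ldots,\epsilon_n)$, set $\epsilon(x)=\sum_i\epsilon_i x_i$. The same computation you used for (ii), applied to $\epsilon$ in place of $\gamma_t$, gives $\epsilon\,\mathfrak{A}=\epsilon(x)\,\epsilon$, so $\epsilon$ is a left eigenvector of $\mathfrak{A}$ with eigenvalue $\epsilon(x)$. Hence $|\epsilon(x)I-\mathfrak{A}|=0$ as an element of $k[x_1,\ldots,x_n]$. But from (i) the characteristic polynomial factors as $|XI-\mathfrak{A}|=\prod_t(X-\gamma_t(x))$, so $\prod_t(\epsilon(x)-\gamma_t(x))=0$ in the integral domain $k[x_1,\ldots,x_n]$, forcing $\epsilon(x)=\gamma_t(x)$ for some $t$. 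Comparing coefficients of the $x_i$ yields $\epsilon_i=\lambda_{it}$ for all $i$, i.e.\ $\epsilon=\gamma_t$. This is the missing idea: pass from ``$\epsilon$ is a root of the algebra relations'' to ``$\epsilon(x)$ is a root of the characteristic polynomial over $k[x_1,\ldots,x_n]$'', and then compare linear forms. Your appeal to split semisimplicity would also close the gap in the intended applications, but the paper's route is more elementary and works at the stated level of generality.
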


\begin{proof}
 It follows from (*) that $\gamma_1(x),\cdots,\gamma_n(x)$ are eigenvalues of the matrix $\mathfrak{A}$. Then $\mid \mathfrak{A}\mid=\prod_{t=1}^n\gamma_t(x)=\prod_{t=1}^n(\sum_{i=1}^n\lambda_{it}x_i)$.

By equations (\ref{equ:RegRep}), we have
\begin{equation*}
\begin{split}
\lambda_{jt}\gamma_t(x)=\sum_{l=1}^n\lambda_{jt}\lambda_{lt}x_l=\sum_{l=1}^n\sum_{i=1}^n a_{ijl}\lambda_{it}x_l=\sum_{i=1}^n(\sum_{l=1}^n a_{ijl}x_l)\lambda_{it}=\sum_{i=1}^n\alpha_{ij}\lambda_{it}.
\end{split}
\end{equation*}
So $\gamma_t(x)\gamma_t=\gamma_t\mathfrak{A}$ for $1\leq t\leq n$.


\end{proof}

\begin{corollary}\label{cor:rvariety}
The variety $V_k(\mathcal{A})$ consists of $\gamma_1,\cdots,\gamma_n$ and $\gamma_0=(0\cdots 0)$.
\end{corollary}

\begin{proof}
We have shown that
$\gamma_1,\cdots,\gamma_n$ are solutions of equations (\ref{equ:RegRep}).  Assume $0\not=\epsilon=(\epsilon_1,\cdots,\epsilon_n)$ is a solution of equations (\ref{equ:RegRep}). Let $\epsilon(x)=\sum_{i=1}^n\epsilon_i x_i$. Then we have $(\epsilon_1\epsilon(x),\cdots,\epsilon_n\epsilon(x))=(\epsilon_1,\cdots,\epsilon_n)\mathfrak{A}$ i.e. $(\epsilon_1,\cdots,\epsilon_n)(\epsilon(x) I-\mathfrak{A})=0$. So $|\epsilon(x) I-\mathfrak{A}|=0$. This implies that $\epsilon(x)$ is a root of the equation:
\begin{equation}\label{eleDeterm} |xI-\mathfrak{A}|=\prod_{i=t}^n (x-\sum_{i=1}^n\lambda_{it}x_i) .\end{equation} So $\epsilon(x)$ is equal to some $\sum_{i=1}^n\lambda_{it}x_i$ by Theorem \ref{Property:A}. And so $\gamma_t=(\epsilon_1,\cdots,\epsilon_n)$. This means that equations (\ref{equ:RegRep}) has exactly $n$ nonzero solutions: $\gamma_1,\cdots,\gamma_n$.
\end{proof}

\begin{theorem}\label{thm:crsemi}
The following conditions are equivalent:
\begin{enumerate}
\item[(i)]{ The commutative algebra $\mathcal{A}$ is semisimple.}
\item[(ii)]{There is no non-zero nilpotent element in $\mathcal{A}$ .}
\item[(iii)]{The matrix $\mathfrak{P}$ is invertible.}
\end{enumerate}
\end{theorem}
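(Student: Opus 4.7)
The plan is to use the simultaneous triangularization (*) and the identity $\mathfrak{P}=\mathcal{R}'\mathcal{R}$ recorded in the excerpt (which yields $\det\mathfrak{P}=(\det\mathcal{R})^{2}$) to translate (iii) into the separation-of-points condition $\bigcap_{t=1}^{n}\ker\mathfrak{r}_t=0$. Granted that reformulation, (ii) $\Leftrightarrow$ (iii) becomes a short eigenvalue argument, and (i) $\Leftrightarrow$ (ii) drops out of the structure of commutative Artinian algebras.

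For (iii) $\Rightarrow$ (ii), I would observe that $a=\sum_i c_i\mathfrak{a}_i$ is nilpotent iff $\mathfrak{L}(a)=\sum_i c_i\mathcal{A}_i$ is a nilpotent matrix (the regular representation of the unital algebra $\mathcal{A}$ is faithful, since $\mathfrak{L}(a^k)=\mathfrak{L}(a)^k$ applied to the identity recovers $a^k$), iff every eigenvalue of $\mathfrak{L}(a)$ vanishes. By (*) those eigenvalues are precisely $\mathfrak{r}_1(a),\ldots,\mathfrak{r}_n(a)$, so nilpotence of $a$ amounts to $a\in\bigcap_t\ker\mathfrak{r}_t$; under (iii) the $\mathfrak{r}_t$ form a basis of $\mathcal{A}^{*}$, so this intersection is zero. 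Conversely, for (ii) $\Rightarrow$ (iii): if $\mathcal{R}$ is singular there is a nonzero $a$ in $\bigcap_t\ker\mathfrak{r}_t$, and by (*) the matrix $\mathfrak{L}(a)$ is similar to a strictly upper triangular matrix, hence nilpotent, producing the desired nonzero nilpotent element of $\mathcal{A}$ and contradicting (ii). The equivalence (i) $\Leftrightarrow$ (ii) then follows because $\mathcal{A}$ is a finite-dimensional (hence Artinian) commutative algebra, in which the Jacobson radical coincides with the nilradical, and semisimplicity is exactly the vanishing of that radical.

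The main obstacle I expect is the standing hypothesis that $F$ is large enough, which is what makes (*) available in the first place. Condition (iii) is the (non)vanishing of a determinant of a matrix with entries in $F$ and so is unchanged by scalar extension, and in characteristic zero condition (ii) is also preserved under passage to $\overline{F}$, so the cleanest route is to run the eigenvalue argument over $\overline{F}$ (where (*) holds unconditionally) and then descend; this is the setting in which Theorem \ref{thm:crsemi} will subsequently be applied to the partition algebras $k\mathcal{P}$. In positive characteristic one would separately have to worry about inseparability (an inseparable field extension is semisimple and has no nilpotents, yet its trace form is identically zero), but this lies outside the paper's intended scope.
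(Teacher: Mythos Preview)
Your proof is correct and follows essentially the same route as the paper: both use the triangularization $(*)$ to identify nilpotence of $a$ with the vanishing of all $\mathfrak{r}_t(a)$, and both invoke the Jacobson radical/nilradical coincidence for (i)$\Leftrightarrow$(ii). The only cosmetic difference is in the direction (iii)$\Rightarrow$(ii): the paper argues via nondegeneracy of the trace form $(\mathfrak{a},\mathfrak{b})=\operatorname{Tr}(\mathfrak{L}(\mathfrak{a}\mathfrak{b}))$ (a nilpotent $\mathfrak{a}$ would lie in its radical), whereas you phrase the same fact as $\bigcap_t\ker\mathfrak{r}_t=0$; since $\mathfrak{P}=\mathcal{R}'\mathcal{R}$ these are two readings of the same linear-algebra statement. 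Your treatment of the ``$F$ large enough'' hypothesis by descent is more explicit than the paper's (which simply remarks that finite or characteristic-zero $F$ forces separability), and your caveat about inseparable extensions in positive characteristic is a correct and useful caution that the paper omits.
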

\begin{proof} If $\mathcal{A}$ is semisimple, then the radical of $\mathcal{A}$ is zero. If there exists a non-zero element $a$ in $\mathcal{A}$ is nilpotent, then the ideal $I$ generated by $a$ which is  non-zero nilpotent. It is contradicting to the semisimplicity of $\mathcal{A}$. If there is no non-zero nilpotent element in $\mathcal{A}$, then the radical of $\mathcal{A}$ is zero, since the radical $J(\mathcal{A})$ is nilpotent. So (i) and (ii) are equivalent.

Now we show that (i) is equivalent to (iii). If $\mathcal{A}$ is semisimple and the field $k$ is finite or it is of characteristic zero, then $\mathcal{A}$ is separable. So we can assume that $k$ is large enough for $\mathcal{A}$. Then $\mathfrak{P}=\mathcal{R}'\mathcal{R}$ as above. If $\mathfrak{P}$ is not invertible, then there exists a nonzero vector $(\ell_1,\cdots,\ell_n)$ such that $\mathcal{R}(\ell_1,\cdots,\ell_n)'=0$. This implies that $\mathcal{U}\sum_{i=1}^n\ell_i\mathcal{A}_i\mathcal{U}^{-1}$ is of the form:
\[\begin{pmatrix}
0& *& *&\cdots &*\\
0& 0& *&\cdots &*\\
\cdot& \cdot& \cdot&\cdots &\cdot\\
0& 0& 0&\cdots &0\\
\end{pmatrix}.\]

So the element $\sum_{i=1}^n\ell_i\mathcal{A}_i=\mathfrak{L}(\sum_{i=1}^n \ell_i\mathfrak{a}_i)$ is nilpotent. This implies that $$\sum_{i=1}^n\ell_i\mathfrak{a}_i\not=0$$ is nilpotent. This contradicts the semisimplicity of $\mathcal{A}$.

If $\mathfrak{P}$ is invertible, then the bilinear form $(\mathfrak{a}_i,\mathfrak{a}_j)=\mathfrak{p}_{ij}$ on $\mathcal{A}$ is nonsingular. If there is a non-zero nilpotent element $\mathfrak{a}\in \mathcal{A}$, then $(\mathfrak{a},\mathfrak{b})$=Tr($\mathfrak{L}(\mathfrak{b}\mathfrak{a}))=0$ for any $\mathfrak{b}\in \mathcal{A}$. This contradicts  the non-singularity of the bilinear form. So (i) and (iii) are equivalent.
\end{proof}

\begin{corollary} If $k$ is a splitting field of $\mathcal{A}$ and $\mathcal{A}$ is semisimple then $\mathfrak{r}_1,\cdots,\mathfrak{r}_n$ are all different irreducible representations of $\mathcal{A}$.
\end{corollary}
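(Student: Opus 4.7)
The plan is to combine Theorem~\ref{Property:A}, which already displays $\mathfrak{r}_1,\ldots,\mathfrak{r}_n$ as one-dimensional representations of $\mathcal{A}$ arising from the simultaneous upper-triangularization $(*)$, with Theorem~\ref{thm:crsemi}, which under semisimplicity forces the invertibility of $\mathfrak{P}=\mathcal{R}'\mathcal{R}$.

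First I would note that because $\mathcal{A}$ is commutative, semisimple, and $F$ is a splitting field for $\mathcal{A}$, Wedderburn's structure theorem gives an algebra isomorphism $\mathcal{A}\cong F\oplus F\oplus\cdots\oplus F$ with exactly $n=\dim_F\mathcal{A}$ one-dimensional components. Hence $\mathcal{A}$ admits exactly $n$ pairwise inequivalent irreducible representations, all of dimension one. Next, each $\mathfrak{r}_j:\mathcal{A}\to F$, $\mathfrak{a}_i\mapsto\lambda_{ij}$, is a one-dimensional homomorphism (this was verified in the discussion leading to Theorem~\ref{Property:A}, and it also follows from the fact that the $\gamma_t$ satisfy the structure equations (\ref{equ:RegRep})), and therefore each $\mathfrak{r}_j$ is irreducible.

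It remains to show that $\mathfrak{r}_1,\ldots,\mathfrak{r}_n$ are pairwise inequivalent; since they are one-dimensional, this just means they are pairwise distinct as linear functionals on $\mathcal{A}$. Here is where Theorem~\ref{thm:crsemi} does the work: semisimplicity of $\mathcal{A}$ yields that $\mathfrak{P}$ is invertible, and since $\mathfrak{P}=\mathcal{R}'\mathcal{R}$, the matrix $\mathcal{R}$ has rank $n$. Thus its rows $\gamma_1,\ldots,\gamma_n$ are linearly independent in $F^n$, and in particular pairwise distinct, so the representations $\mathfrak{r}_1,\ldots,\mathfrak{r}_n$ are pairwise inequivalent. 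Having produced $n$ pairwise inequivalent irreducible representations of an algebra that admits exactly $n$ such, we conclude that $\mathfrak{r}_1,\ldots,\mathfrak{r}_n$ is the complete list.

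The main obstacle is really just the distinctness step; once the invertibility of $\mathfrak{P}$ is invoked, everything else is bookkeeping. A minor subtlety worth checking is that the simultaneous triangularization $(*)$ and the Wedderburn decomposition genuinely agree on the number $n$ of blocks (both being equal to $\dim_F\mathcal{A}$), so that counting arguments close the proof cleanly.
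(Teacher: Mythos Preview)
Your proof is correct and follows essentially the same approach as the paper: invoke Theorem~\ref{thm:crsemi} to get $\mathfrak{P}=\mathcal{R}'\mathcal{R}$ invertible, conclude that the rows $\gamma_1,\ldots,\gamma_n$ of $\mathcal{R}$ are linearly independent, hence the $\mathfrak{r}_i$ are pairwise distinct. The only difference is that you make the counting argument explicit via Wedderburn (exactly $n$ simple components, so exactly $n$ irreducibles), whereas the paper leaves this implicit; your version is slightly more complete in that respect.
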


\begin{proof}
By Theorem \ref{thm:crsemi}, $\mathfrak{P}=\mathcal{R'R}$ is invertible. So all rows of $R$ are linearly independent. Since the $i$-row determines the representation $\mathfrak{r}_i$, it follows that $\mathfrak{r}_1,\cdots,\mathfrak{r}_n$ are all non-isomorphic irreducible representations of $\mathcal{A}$.
\end{proof}

In the rest of this section, we will show that all partition algebras are semisimple.
Let $\mathcal{P}=\{\mathcal{P}_i\}_{i=1}^n$ be a good partition of Cl$(G)$ i.e.
Let $\hat{C}_1,\cdots,\hat{C}_n$ be  class sums of the partition $\mathcal{P}$. Denotes by $\ell_i$ the number of elements in $\mathcal{P}_i$. Let $\mathcal{P}_{i'}$ be the set consisting of the inverse of elements in $\mathcal{P}_i$. Then
$\ell_{i'}=\ell_i$.  Denotes by $\ell_{i_1i_2\cdots i_r}$ the number of solutions of the equation $g_{i_1}g_{i_2}\cdots g_{i_r}=1$ with $g_{i_j}\in \mathcal{P}_{i_j}$ for $1\leq j \leq r$.

\begin{lemma}\label{basicEqs}
The following equations hold:

\begin{enumerate}
\item[(i)]{$\ell_{i_1i_2\cdots i_r}=\ell_{i_1' i_2'\cdots i_r'}$.}
\item[(ii)]{$\ell_{i_1i_2\cdots i_r}=\ell_{i_1^\tau i_2^\tau\cdots i_r^\tau}$,
where $\tau$ is a permutation on the set $\{i_1,i_2,\cdots ,i_r\}$.}
\item[(iii)]{$\frac{1}{\ell_{j'}}\ell_{j'k_1\cdots k_t}=a_{jk_1\cdots k_t}$ where $a_{jk_1\cdots k_t}$ is the coefficient of the equation $$\hat{C_{k_1}}\hat{C_{k_2}}\cdots \hat{C_{k_t}}=\sum_{j=1}^n a_{jk_1\cdots k_t}\hat{C_j}.$$}
\item[(iv)]{$\sum_{i_{r+1}=1}^n\ell_{i_1i_2\cdots i_ri_{r+1}}=\ell_{i_1}\cdots \ell_{i_r}$.}
\item[(v)]{$\ell_{i_1i_2\cdots i_ai_{a+1}\cdots i_r}=\sum_{j=1}^n \frac{1}{\ell_j} \ell_{i_1i_2\cdots i_aj}\ell_{j'i_{a+1}\cdots i_r}.$}
\end{enumerate}

\end{lemma}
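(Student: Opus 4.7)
The plan is to interpret $\ell_{i_1 i_2 \cdots i_r}$ as the coefficient of $1 \in G$ in the product $\hat{C}_{i_1}\hat{C}_{i_2}\cdots \hat{C}_{i_r}$ computed in the group algebra $kG$ (equivalently, inside the commutative partition algebra $k\mathcal{P}$). Once this dictionary is in place, each of (i)--(v) follows from an elementary algebraic manipulation of class sums, and I would not prove them in the order stated.

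I would first dispatch (ii) and (iv), since they are essentially free. For (ii), commutativity of $k\mathcal{P}$ makes the product $\hat{C}_{i_1}\cdots\hat{C}_{i_r}$ invariant under any permutation of the factors, so its coefficient at $1$ inherits that invariance. For (iv), each tuple $(g_{i_1},\ldots,g_{i_r})\in\mathcal{P}_{i_1}\times\cdots\times\mathcal{P}_{i_r}$ forces $g_{i_{r+1}}:=(g_{i_1}\cdots g_{i_r})^{-1}$ to lie in a unique partition class, so summing over $i_{r+1}$ merely repartitions all $\ell_{i_1}\cdots\ell_{i_r}$ tuples.

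Next, (i) would come from the $k$-linear anti-involution $g\mapsto g^{-1}$ of $kG$: it sends $\hat{C}_i$ to $\hat{C}_{i'}$ and reverses the factor order while fixing $1$, so combined with (ii) to unreverse the order, the identity follows. For (iii), I would multiply the structure equation $\hat{C}_{k_1}\cdots\hat{C}_{k_t}=\sum_j a_{j k_1\cdots k_t}\hat{C}_j$ by $\hat{C}_{j'}$ and extract the coefficient of $1$: on the left it is $\ell_{j' k_1\cdots k_t}$, while among the products $\hat{C}_{j'}\hat{C}_m$ on the right only $m=j$ contributes to the identity (because $gh=1$ with $g\in\mathcal{P}_{j'}$ and $h\in\mathcal{P}_m$ forces $\mathcal{P}_m=\mathcal{P}_j$), with contribution exactly $\ell_{j'}$; the stated relation drops out.

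The main obstacle, and the only identity that needs real care, is (v). The plan is to split the product as $\hat{C}_{i_1}\cdots\hat{C}_{i_r}=(\hat{C}_{i_1}\cdots\hat{C}_{i_a})(\hat{C}_{i_{a+1}}\cdots\hat{C}_{i_r})$, expand each factor in the basis $\{\hat{C}_m\}$ using its structure constants, and then pick off the coefficient of $1$; only pairs $(m,n)$ of summation indices with $n=m'$ survive, each with weight $\ell_m$. Substituting (iii) to rewrite both structure constants as $\ell$-ratios, cancelling the surviving $\ell_m$, reparametrising $j=m'$, and applying (ii) to reorder $\ell_{j\,i_1\cdots i_a}$ to $\ell_{i_1\cdots i_a\,j}$ then produces the stated right-hand side. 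The delicacy lies in consistently tracking the involution $m\leftrightarrow m'$ through the substitution so that $\ell_m/\ell_{m'}$ and the structural identity $\ell_m=\ell_{m'}$ combine correctly; no deeper input than (ii) and (iii) is needed.
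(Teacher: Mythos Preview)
Your proposal is correct and takes essentially the same approach as the paper: both arguments rest on the observation that $\ell_{i_1\cdots i_r}$ is the coefficient of $1$ in $\hat{C}_{i_1}\cdots\hat{C}_{i_r}$, and then use commutativity of class sums, the inversion anti-involution, and the pairing $\ell_{mn}=\ell_m\delta_{n,m'}$. The paper carries out the $r=3$ (and $r=4$) cases by explicit counting and defers the general case to induction, whereas your algebraic framing handles general $r$ uniformly, but the underlying content is the same.
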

\begin{proof}
We will prove above equations in case $r=3$ or $r=4$. The general cases follows from induction on $r$.
Since
$abc=1$ is equivalent to $cab=1$ or $bca=1$,
we have $\ell_{ijk}=\ell_{kij}=\ell_{jki}$.
There is a bijection
$P_iP_j \to P_jP_i$ given by $(a,b) \mapsto (b, b^{-1}ab)$ such that the
products coincide. So we have
$\ell_{ijk}=\ell_{jik}$. There is also an obvious bijection $P_{i_1} \dots P_{i_t}$
to $P_{i^\prime_t} \dots P_{i^\prime_1}$ by taking inverses.
Let $S_{\{i,j,k\}}$ be the symmetric group on $\{i,j,k\}$ which is generated by $(i,j)$ and $(i,j,k)$. By applying above discussions repeatedly, we have
$\ell_{ijk}=\ell_{i^\sigma j^\sigma k^\sigma}=\ell_{k'j'i'}=\ell_{i'j'k'}$, for $\sigma\in S_{\{i,j,k\}}$.

It follows from the definition that $\ell_{ijk}=|\{(a,b,c)\mid a\in \mathcal{P}_i,b\in \mathcal{P}_j,c\in \mathcal{P}_k,abc=1\}|$.
Since $abc=1$ is equivalent to $bc=a^{-1}$, we have
$\ell_{ijk}=|\{(b,c)\mid b\in \mathcal{P}_j,c\in \mathcal{P}_k,bc\in \mathcal{P}_{i'}\}|$.
Given $a\in \mathcal{P}_i$. Then $a_{i'jk}=|\{b\in \mathcal{P}_j,c\in \mathcal{P}_k|abc=1\}|$.
So $\ell_{ijk}=\ell_{i'} a_{i'jk}=\ell_{i} a_{i'jk}$.

Since $\{(a,b)|a\in \mathcal{P}_i,b\in \mathcal{P}_j\}=\bigsqcup_{k=1}^n \{(a,b)|a\in \mathcal{P}_i,b\in \mathcal{P}_j,ab\in \mathcal{P}_{k'}\}$,
we have $\ell_i\ell_j=\sum_{k=1}^{n} \ell_{ijk}$.

Given \ $x\in \mathcal{P}_\lambda$. Then the number of solutions of the equation $abcd=1,a\in \mathcal{P}_i,b\in \mathcal{P}_j,c\in \mathcal{P}_k,d\in \mathcal{P}_l$  afforded by equations $ab=x^{-1},cd=x,a\in \mathcal{P}_i,b\in \mathcal{P}_j,c\in \mathcal{P}_k,d\in \mathcal{P}_l$
is $\frac{\ell_{ij\lambda}}{\ell_\lambda}\frac{\ell_{\lambda'kl}}{\ell_\lambda}=\frac{\ell_{ij\lambda}\ell_{\lambda'kl}}{\ell_\lambda^2}$.
When $x$ runs over  $G$, it follows
$$\ell_{ijkl}=\ell_\lambda a_{ijkl}=\sum_{\lambda=1}^{n}\frac{\ell_{ij\lambda}\ell_{\lambda'kl}}{\ell_\lambda} .$$

\end{proof}

Since $\hat{{C}_j}\hat{{C}_l}=\hat{{C}_l}\hat{{C}_j}$, we have $a_{jk_1\cdots k_t}=a_{jk_1^\tau\cdots k_t^\tau}$ for each permutation $\tau$ on $k_1,\cdots,k_t$. Define a matrix
$\mathcal{M}_j=(a_{klj})$ of degree $n$ for each $1\leq j \leq n$. Then $\mathcal{L}:k\mathcal{P}\rightarrow M_n(k)(\hat{C}_j\mapsto \mathcal{M}_j)$ is a regular representation of the partition algebra $k\mathcal{P}$. So $\mathcal{M}_i\mathcal{M}_j=\mathcal{M}_j\mathcal{M}_i$ for $1\leq i,j\leq n$. Assume that $\mathcal{L}(\hat{C_1})$ is the identity matrix.

\begin{lemma}\label{property:p} Let $p_{ij}=\mbox{\rm Tr}(\mathcal{M}_i\mathcal{M}_j)$ and $p_i=p_{i1}=p_{1i}=\sum_{t=1}^n\frac{\ell_{itt'}}{\ell_t}$ for $1\leq i,j\leq n$. Then
\begin{equation}\label{equ:pij}
p_{ij}=\sum_{t=1}^n \frac{1}{\ell_t}\ell_{ijtt'}=\sum_{l,t=1}^na_{ttl}a_{lij}=\sum_{l=1}^n\frac{1}{\ell_l}\ell_{ijl}p_l.
\end{equation}
\begin{equation}\label{equ:pij'}
p_{ij'}=\sum_{s,t=1}^n \frac{\ell_{sti}\ell_{stj}}{\ell_s\ell_t}=p_{ji'}=p_{i'j}=p_{j'i}.
\end{equation}

\end{lemma}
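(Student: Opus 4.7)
The two key ingredients are that $\mathcal{L}:k\mathcal{P}\to M_n(k)$ is a ring homomorphism, so $\mathcal{M}_i\mathcal{M}_j=\sum_l a_{lij}\mathcal{M}_l$, and Lemma \ref{basicEqs}, which lets me convert freely between structure constants $a_{\bullet\bullet\bullet}$ and root-counts $\ell_{\bullet\bullet\bullet}$, permute indices, and pass to inverses.

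For the chain (\ref{equ:pij}) I proceed right-to-left. Taking traces in $\mathcal{M}_i\mathcal{M}_j=\sum_l a_{lij}\mathcal{M}_l$ gives $p_{ij}=\sum_l a_{lij}\,\mathrm{Tr}(\mathcal{M}_l)$. Reading the diagonal of $\mathcal{M}_l=(a_{klj})$ yields $\mathrm{Tr}(\mathcal{M}_l)=\sum_t a_{ttl}$, which by Lemma \ref{basicEqs}(iii) equals $\sum_t \ell_{t'tl}/\ell_t$, and by (ii) this coincides with the stated $p_l=\sum_t\ell_{ltt'}/\ell_t$. That establishes the middle expression $p_{ij}=\sum_{l,t}a_{ttl}a_{lij}$. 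To rewrite this as $\sum_l \ell_{ijl}p_l/\ell_l$, I combine $a_{lij}=\ell_{l'ij}/\ell_l$ (Lemma \ref{basicEqs}(iii)) with the identity $p_{l'}=p_l$: the latter follows by applying (i)--(ii) to $\sum_k\ell_{k'kl}/\ell_k$. After reindexing $l\leftrightarrow l'$ (harmless since $\ell_{l'}=\ell_l$) and using (ii) to reorder indices inside $\ell_{l'ij}$, the sum becomes $\sum_l \ell_{ijl}p_l/\ell_l$. Finally, Lemma \ref{basicEqs}(v) with $r=4$ at split $a=2$ gives $\ell_{ijtt'}=\sum_\lambda \ell_{ij\lambda}\ell_{\lambda'tt'}/\ell_\lambda$; dividing by $\ell_t$ and summing over $t$ yields $\sum_t \ell_{ijtt'}/\ell_t=\sum_\lambda \ell_{ij\lambda}p_{\lambda'}/\ell_\lambda=\sum_\lambda \ell_{ij\lambda}p_\lambda/\ell_\lambda$, closing the loop.

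For (\ref{equ:pij'}), I expand $p_{ij'}=\sum_{k,m}a_{kmi}a_{mkj'}$, substitute $a_{kmi}=\ell_{k'mi}/\ell_k$ and, using Lemma \ref{basicEqs}(i), $a_{mkj'}=\ell_{m'kj'}/\ell_m=\ell_{mk'j}/\ell_m$, and then reindex $s=k'$, $t=m$. A permutation of indices via (ii) rewrites the result in the manifestly symmetric form $\sum_{s,t}\ell_{sti}\ell_{stj}/(\ell_s\ell_t)$. Symmetry in $i,j$ gives $p_{ij'}=p_{ji'}$, while commutativity of the $\mathcal{M}$'s (so $p_{ab}=p_{ba}$) supplies $p_{ij'}=p_{j'i}$ and $p_{ji'}=p_{i'j}$, yielding the full chain $p_{ij'}=p_{ji'}=p_{i'j}=p_{j'i}$.

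The only real obstacle is bookkeeping: both identities rely on repeated passage between the structure-constant and root-count viewpoints, and on the derived symmetry $p_l=p_{l'}$. Once the permutation/inversion symmetries of Lemma \ref{basicEqs} are used consistently, each equality reduces to routine manipulation of finite sums.
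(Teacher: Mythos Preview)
Your proof is correct and follows essentially the same route as the paper: both arguments convert between the structure constants $a_{\bullet\bullet\bullet}$ and the root-counts $\ell_{\bullet\bullet\bullet}$ via Lemma~\ref{basicEqs}(iii), use the splitting identity (v) to pass to $\ell_{ijtt'}$, and invoke the symmetry $p_l=p_{l'}$ (from (i)--(ii)) to finish. The only cosmetic difference is your starting point---you first use the algebra homomorphism $\mathcal{M}_i\mathcal{M}_j=\sum_l a_{lij}\mathcal{M}_l$ and then take traces, whereas the paper begins from the raw double sum $p_{ij}=\sum_{s,t}a_{sti}a_{tsj}$---but the subsequent index manipulations are the same.
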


\begin{proof}
By definition, $p_{ij}=\sum_{s,t=1}^na_{sti}a_{tsj},p_i=p_{i'}$. By (3) of Lemma \ref{basicEqs}, we can replace $a_{sti},a_{tsj}$ by $\frac{1}{\ell_{s}}\ell_{s'ti}$ and $\frac{1}{\ell_{t}}\ell_{t'sj}$  respectively in the definition of $p_{ij}$ and we get
\begin{equation*}
\begin{split}
p_{ij}=&\sum_{s,t=1}^n\frac{\ell_{s'ti}\ell_{t'sj}}{\ell_{s}\ell_{t}}
=\sum_{t=1}^n\frac{1}{\ell_{t}}\sum_{s=1}^n\frac{\ell_{s'ti}\ell_{t'sj}}{\ell_{s}}
=\sum_{t=1}^n\frac{1}{\ell_{t}}\ell_{ijtt'}\\
=&\sum_{t=1}^n\sum_{l=1}^n\frac{1}{\ell_{l}}\ell_{ijl'}\frac{1}{\ell_{t}}\ell_{ltt'}
=\sum_{t=1}^n\sum_{l=1}^n a_{ttl}a_{lij}\\
=&\sum_{l=1}^n\frac{1}{\ell_{l}}\ell_{ijl'}\sum_{t=1}^n\frac{1}{\ell_{t}}\ell_{ltt'}=\sum_{l=1}^n\frac{1}{\ell_{l}}\ell_{ijl'}p_l.
\end{split}
\end{equation*}
Furthermore,
\begin{equation*}
\begin{split}
p_{ij}=&\sum_{t=1}^n\frac{1}{\ell_{t}}\ell_{ijtt'}=\sum_{l=1}^n\frac{1}{\ell_{l}}\ell_{ijl}\sum_{t=1}^n\frac{1}{\ell_{t}}\ell_{l'tt'}\\
=&\sum_{l=1}^n\frac{1}{\ell_{l}}\ell_{ijl}p_{l'}=\sum_{l=1}^n\frac{1}{\ell_{l}}\ell_{ijl}p_{l}.
\end{split}
\end{equation*}
So equation (\ref{equ:pij}) holds.

When $s'$ runs over $n$ classes, so does $s$. So we can exchange $s$ and $s'$ in the following equation $p_{ij'}=\sum_{s,t=1}^n\frac{\ell_{s'ti}\ell_{t's j'}}{\ell_{s}\ell_{t}}$ and we get  $p_{ij'}=\sum_{s,t=1}^n\frac{\ell_{s ti}\ell_{t's'j'}}{\ell_{s}\ell_{t}}$. It follows from $l_{ts j}=l_{t's'j'}$ that $p_{ij'}=\sum_{s,t=1}^n\frac{\ell_{s ti}\ell_{ts j}}{\ell_{s}\ell_{t}}$, and the rest equations in (\ref{equ:pij'}) follows from it.
\end{proof}
\begin{lemma}\label{determNot0}

The determinant $\mid p_{ij}\mid_{1\leq i,j\leq n}$ is not zero.
\end{lemma}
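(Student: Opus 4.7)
The plan is to derive this from Theorem~\ref{thm:crsemi} applied to the commutative algebra $k\mathcal{P}$. By that theorem, the invertibility of the matrix $(p_{ij})$ (equivalence (i)$\Leftrightarrow$(iii)) is equivalent to the semisimplicity of $k\mathcal{P}$, which in turn (equivalence (i)$\Leftrightarrow$(ii)) is equivalent to the absence of a nonzero nilpotent element in $k\mathcal{P}$. So the whole statement reduces to exhibiting no nonzero nilpotent in the partition algebra.

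To show the absence of nilpotents, I first note that each $p_{ij}$ is a rational integer, since by Corollary~\ref{lem:kclass} (or its evident analogue for a general good partition) the structure constants $a_{lij}$ are integers, and $p_{ij}=\sum_{s,t}a_{sti}a_{tsj}$. Hence $|p_{ij}|$ is an integer, and it suffices to prove nonvanishing after base change to $\mathbb{C}$. By Definition~\ref{def:goodpart}(2), the complex partition algebra $\mathbb{C}\mathcal{P}=\bigoplus_{i=1}^n\mathbb{C}\hat{C}_i$ is a commutative subalgebra of the center $Z(\mathbb{C}G)$. By Maschke's theorem $\mathbb{C}G$ is semisimple, hence so is $Z(\mathbb{C}G)$; a commutative semisimple algebra over a field is reduced (over an algebraic closure it decomposes as a product of copies of the base field, so it has no nonzero nilpotent). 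Reducedness passes to subalgebras, so $\mathbb{C}\mathcal{P}$ contains no nonzero nilpotent element, which is exactly what was required.

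The one mild subtlety is that the identity $e$ of $\mathbb{C}\mathcal{P}$ guaranteed by Definition~\ref{def:goodpart}(3) need not coincide with $1_{\mathbb{C}G}$; it is merely an idempotent of $Z(\mathbb{C}G)$. This does not affect the argument because nilpotency of an element is intrinsic to the ambient ring $Z(\mathbb{C}G)$ and is independent of the choice of unit. I therefore anticipate no genuine difficulty: the lemma is essentially a bookkeeping corollary of Theorem~\ref{thm:crsemi} together with the classical semisimplicity of $\mathbb{C}G$.
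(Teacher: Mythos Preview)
Your argument is correct, but it follows a genuinely different route from the paper's. The paper proves Lemma~\ref{determNot0} \emph{directly} by a positivity argument: it rewrites $(p_{ij'})$ as $M'M$ for an explicit $n^2\times n$ matrix $M$ with real entries built from the counting numbers $\ell_{ijk}$, applies the Binet--Cauchy formula to get $|p_{ij'}|=\sum_I |M_I|^2\ge 0$, and then exhibits one particular $n\times n$ minor $M_I$ that equals $\mathrm{diag}(\sqrt{\ell_1},\dots,\sqrt{\ell_n})$, forcing the sum to be strictly positive. Only afterwards (Theorem~\ref{thm:semisimPart}) does the paper combine this with Theorem~\ref{thm:crsemi} to conclude that $k\mathcal{P}$ is semisimple.

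You reverse this logical flow: you first observe that $\mathbb{C}\mathcal{P}$ sits inside $Z(\mathbb{C}G)$, which is reduced by Maschke's theorem, hence $\mathbb{C}\mathcal{P}$ has no nonzero nilpotents; then Theorem~\ref{thm:crsemi} gives $|p_{ij}|\ne 0$. This is shorter and more conceptual, and it shows that Lemma~\ref{determNot0} and Theorem~\ref{thm:semisimPart} are really a single statement once Maschke is available. The trade-off is that the paper's approach is self-contained in the spirit of Frobenius's original development (not invoking the semisimplicity of $\mathbb{C}G$ as a black box) and yields the extra quantitative information $|p_{ij'}|\ge \prod_i \ell_i>0$, whereas your argument imports Maschke's theorem from outside the framework being built. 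Both are valid; they simply reflect different priorities.
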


\begin{proof}
By a series of swapping columns of $\mid p_{ij}\mid$, we have $|p_{ij'}|$. It suffices to show $|p_{ij'}|\not=0$.
Define \[M=\begin{pmatrix}\cdot& \cdot& \cdot&\cdots &\cdot\\
\frac{\ell_{ij-11}}{\sqrt{\ell_i\ell_{j-1}}}&\frac{\ell_{ij-12}}{\sqrt{\ell_i\ell_{j-1}}}&\frac{\ell_{ij-13}}{\sqrt{\ell_i\ell_{j-1}}}&
\cdots&\frac{\ell_{ij-1n}}{\sqrt{\ell_i\ell_{j-1}}}\\
\frac{\ell_{ij1}}{\sqrt{\ell_i\ell_j}}&\frac{\ell_{ij2}}{\sqrt{\ell_i\ell_j}}&\frac{\ell_{ij3}}{\sqrt{\ell_i\ell_j}}&
\cdots&\frac{\ell_{ijn}}{\sqrt{\ell_i\ell_j}} \\
\frac{\ell_{ij+11}}{\sqrt{\ell_i\ell_{j+1}}}&\frac{\ell_{ij+12}}{\sqrt{\ell_i\ell_{j+1}}}&\frac{\ell_{ij+13}}{\sqrt{\ell_i\ell_{j+1}}}&
\cdots&\frac{\ell_{ij+1n}}{\sqrt{\ell_i\ell_{j+1}}}\\
\cdot& \cdot& \cdot&\cdots &\cdot\end{pmatrix}\] to be a matrix with $n^2$ rows and $n$ columns, where $1\leq i,j\leq n$. By equation (5), it follows that
\[\mid p_{ij'}\mid= \mid M'M\mid.\] By Binet-Cauchy formula for determinant, we have
\[ \mid p_{ij'}\mid=\sum_{1\leq t_1<\cdots<t_n\leq n^2}\mid M\begin{pmatrix}t_1 & t_2&t_3&\cdots &t_n\\
1& 2& 3&\cdots &n\end{pmatrix}\mid^2.\] In particular, we have some \[M\begin{pmatrix}t_1 & t_2&t_3&\cdots &t_n\\
1& 2& 3&\cdots &n\end{pmatrix}=\begin{pmatrix}\frac{\ell_{11'1}}{\sqrt{\ell_1\ell_{1'}}}&\frac{\ell_{11'2}}{\sqrt{\ell_1\ell_{1'}}}&\frac{\ell_{11'3}}{\sqrt{\ell_1\ell_{1'}}}&
\cdots&\frac{\ell_{11'n}}{\sqrt{\ell_1\ell_{1'}}}\\
\frac{\ell_{12'1}}{\sqrt{\ell_1\ell_{2'}}}&\frac{\ell_{12'2}}{\sqrt{\ell_1\ell_{2'}}}&\frac{\ell_{12'3}}{\sqrt{\ell_1\ell_{2'}}}&
\cdots&\frac{\ell_{12'n}}{\sqrt{\ell_1\ell_{2'}}}\\
\cdot& \cdot& \cdot&\cdots &\cdot\\
\frac{\ell_{1n'1}}{\sqrt{\ell_1\ell_{n'}}}&\frac{\ell_{1n'2}}{\sqrt{\ell_1\ell_{n'}}}&\frac{\ell_{1n'3}}{\sqrt{\ell_1\ell_{n'}}}&
\cdots&\frac{\ell_{1n'n}}{\sqrt{\ell_1\ell_{n'}}}\\\end{pmatrix}.\] It is the diagonal matrix $\mbox{diag}(\sqrt{\ell_1},\cdots,\sqrt{\ell_n})$. So $\mid p_{ij'}\mid\not=0$.
\end{proof}


It is clear that the next Theorem follows from  Maschke's Theorem easily. But the constants introduced above are enough for the proof.
\begin{theorem}\label{thm:semisimPart}
All partition algebras are semisimple over a field $k$ with characteristic zero.
\end{theorem}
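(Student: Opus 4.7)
The plan is to combine the criterion of Theorem~\ref{thm:crsemi} with the non-vanishing determinant of Lemma~\ref{determNot0}. Specifically, the partition algebra $k\mathcal{P}$ is commutative (its basis elements $\hat{C}_1,\dots,\hat{C}_n$ lie in the commutative algebra $Z(kG)$), so Theorem~\ref{thm:crsemi} applies: $k\mathcal{P}$ is semisimple if and only if the trace matrix $\mathfrak{P}=(p_{ij})$ with $p_{ij}=\mathrm{Tr}(\mathcal{M}_i\mathcal{M}_j)$ is invertible. Hence it suffices to verify that $\mathfrak{P}$ is invertible over the field $F$.

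First I would observe that all the structure constants $a_{lij}$ appearing in $\hat{C}_j\hat{C}_l=\sum_i a_{lij}\hat{C}_i$ are rational integers (they count group elements), and therefore the numbers $p_{ij}=\sum_{s,t}a_{sti}a_{tsj}$ are rational integers as well. In particular, the matrix $\mathfrak{P}$ has rational entries, so its determinant $\det\mathfrak{P}$ is a well-defined rational number that does not depend on the ambient field $F$, as long as $F$ has characteristic zero.

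Next I would invoke Lemma~\ref{determNot0}, which shows $\det\mathfrak{P}\neq 0$ by the Binet--Cauchy expansion: the determinant is expressed as a sum of squares of minors of the real matrix $M$, one of which is shown to be the nonzero diagonal $\mathrm{diag}(\sqrt{\ell_1},\dots,\sqrt{\ell_n})$. Since this value is a strictly positive rational number, its image in any characteristic zero field $F$ remains nonzero, so $\mathfrak{P}$ is invertible as a matrix over $F$.

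Finally, applying the implication (iii)$\Rightarrow$(i) in Theorem~\ref{thm:crsemi} to the commutative algebra $k\mathcal{P}$ yields that $k\mathcal{P}$ is semisimple, completing the proof. The only subtle point I would be careful about is ensuring the argument in Lemma~\ref{determNot0}, which is phrased in terms of real square roots, actually gives nonvanishing of a rational integer determinant; this is fine because the Binet--Cauchy identity expresses $\det\mathfrak{P}$ as a rational quantity bounded below by the squared determinant of an explicit diagonal submatrix with positive diagonal entries $\ell_i$, so the resulting rational number is strictly positive and hence nonzero in any field of characteristic zero.
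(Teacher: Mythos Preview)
Your proposal is correct and follows essentially the same approach as the paper: the paper's proof is simply ``It follows from Theorem~\ref{thm:crsemi} and Lemma~\ref{determNot0},'' and you have spelled out precisely this argument, including the verification that $k\mathcal{P}$ is commutative and the observation that the rational determinant computed in Lemma~\ref{determNot0} remains nonzero in any characteristic zero field.
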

\begin{proof}
It follows from Theorem \ref{thm:crsemi} and Lemma \ref{determNot0}.
\end{proof}

\section{Polynomials of algebras}\label{sec:polyalg}

 Let $\mathcal{A}$ be an algebra over $k$ with identity. Let $\alpha_1,\cdots, \alpha_n$ be a basis of $\mathcal{A}$. Given $\alpha \in \mathcal{A}$, then $\alpha \alpha_i=\sum_{j=1}^n \ell_{ji}^\alpha \alpha_j$. The left regular representation $\mathfrak{L}$ of $\mathcal{A}$ is defined by $\mathfrak{L}(\alpha)=(\ell_{ji}^\alpha)_{1\leq j,i\leq n}$. A generic element $\alpha \in \mathcal{A}$ can be written in the form $\alpha=x_1\alpha_1+\cdots +\alpha_n$ and
$\mathfrak{L}(\alpha)=x_1\mathfrak{L}(\alpha_1)+\cdots +x_n\mathfrak{L}(\alpha_n)$, where $x_1,\cdots, x_n$ are indeterminates. The matrix $\lambda I - \mathfrak{L}(\alpha) $
is similar to \[\mbox{ Diag}(\epsilon_1(\lambda,x_1,\cdots,x_n),\cdots,\epsilon_n(\lambda,x_1,\cdots,x_n)),\] where the polynomials $\epsilon_i(\lambda,x_1,\cdots,x_n)$ is a factor of $\epsilon_{i+1}(\lambda,x_1,\cdots,x_n)$. The polynomial $\epsilon_n(\lambda,x_1,\cdots,x_n)$ is called the minimal polynomial of $\mathcal{A}$ with respect to $\alpha_1,\cdots,\alpha_n$ and it is denoted by $m_{\mathcal{A}}(\lambda,x_1,\cdots,x_n)$. The polynomial $\prod_{i=1}^r \epsilon_i(\lambda,x_1,\cdots,x_n)=|\lambda I-\mathfrak{L}(\alpha)|$ is called the characteristic polynomial of $\mathcal{A}$ with respect to $\alpha_1,\cdots,\alpha_n$ and it is denoted by $c_{\mathcal{A}}(\lambda,x_1,\cdots,x_n)$. The polynomial $(-1)^nc_{\mathcal{A}}(0,x_1,\cdots,x_n)$ is called generic polynomial of $\mathcal{A}$ with respect to the basis $\alpha_1,\cdots,\alpha_n$ and it is denoted by $g_\mathcal{A}(x_1,\cdots,x_n)$.
Elements of $\mathcal{A}$ are annihilated by $m_{\mathcal{A}}(\lambda,x_1,\cdots,x_n)$ and $c_{\mathcal{A}}(\lambda,x_1,\cdots,x_n)$. These polynomials have the following forms respectively
\begin{eqnarray*}&&c_{\mathcal{A}}(\lambda,x_1,\cdots,x_n)=|\lambda I -M|\\
&=&\lambda^n-\lambda^{n-1}\tau_1(x_1,\cdots,x_n)+\cdots + (-1)^n \tau_n(x_1,\cdots,x_n),\end{eqnarray*}
\[m_{\mathcal{A}}(\lambda,x_1,\cdots,x_n)=\lambda^r-\lambda^{r-1}\rho_1(x_1,\cdots,x_n)+\cdots + (-1)^r \rho_r(x_1,\cdots,x_n),\] where $\rho_i(x_1,\cdots,x_n)$  and $\tau_i(x_1,\cdots,x_n)$ are homogenous polynomial with degree $i$. The polynomial $\rho_r(x_1,\cdots,x_n)$ is usually called norm form of $\mathcal{A}$.

Given $\beta=\sum_{i=1}^n d_i\alpha_i \in \mathcal{A}$. Then $c_{\mathcal{A}}(\lambda,d_1,\cdots,d_n)=|\lambda I-\mathfrak{L}(\beta)|$ is called the characteristic polynomial of $\beta$ and it is denoted by $c_\beta(\lambda)$.

Let $\mathcal{A}$ be a group algebra of a finite group $G=\{g_1,\cdots,g_\ell\}$. Then then generic polynomial $(-1)^\ell g_\mathcal{A}(x_{g_1},\cdots,x_{g_\ell})$ with respect to basis $g_1,\cdots,g_\ell$ is called group determinant of $G$ over $k$. The group determinant can also be presented as

\begin{definition}\cite{F2}
Let $G=\{g_1,\cdots, g_\ell\}$ be a finite group with $g_1$ as identity of $G$. Let $x_{g_1},\cdots,x_{g_\ell}$ be variables indexed by elements of $G$. Then the matrix
$$M_G( x)=\begin{pmatrix}
x_{g_1g_1^{-1}}&x_{g_1g_2^{-1}}&\cdots &x_{g_1g_\ell^{-1}}\\
\cdots&\cdots&\cdots&\cdots\\
x_{g_\ell g_1^{-1}}&x_{g_\ell g_2^{-1}}&\cdots &x_{g_\ell g_\ell^{-1}}
\end{pmatrix}$$ is called the matrix of $G$, the determinant $\Theta_G(x)$ of $M_G(x)$ is called group determinant of $G$.
\end{definition}

Let $c_\mathcal{A}(\lambda,x_1,\cdots,x_n)$ be the characteristic polynomial of $\mathcal{A}$ with respect to the basis $\alpha_1,\cdots,\alpha_n$ and let $c_\mathcal{A}(\lambda,y_1,\cdots,y_n)$ be  the characteristic polynomial of $\mathcal{A}$ with respect to the basis $\beta_1,\cdots,\beta_n$. Assume that
$(\beta_1,\cdots,\beta_n)=(\alpha_1,\cdots,\alpha_n)A$ for an invertible matrix $A$.

\begin{theorem}\label{thm:bccpoly}
Let $A^{-1}=(a'_{ij})$. Then the polynomial $c_\mathcal{A} (\lambda,x_1,\cdots,x_n)$ is equal to the polynomial $c_\mathcal{A}(\lambda,y_1,\cdots,y_n)$ after replacing $y_i$ by $a'_{i1}x_1+\cdots +a'_{in}x_n$ for $1\leq i \leq n$.
\end{theorem}

\begin{proof}
Let $(y_1,\cdots,y_n)=A^{-1}x$, where $x=(x_1,\cdots,x_n)'$. We denote $x_1\alpha_1+\cdots +x_n\alpha_n$ and $y_1\beta_1+\cdots +y_n\beta_n$ by $x_\alpha$ and $y_\beta$ respectively. Then $y_\beta=(\alpha_1,\cdots,\alpha_n)AA^{-1}x=x_\alpha$. Assume that $(x_\alpha\alpha_1,\cdots,x_\alpha\alpha_n)=(\alpha_1,\cdots,\alpha_n)M_\alpha$ and $(y_\beta\beta_1,\cdots,y_\beta\beta_n)=(\beta_1,\cdots,\beta_n)M_\beta$. Then $M_\beta=A^{-1}M_\alpha A$. So we have
$f(\lambda,x_1,\cdots,x_n)=|\lambda I-M_\alpha|=|\lambda I-M_\beta|=g(\lambda,y_1,\cdots,y_n)$.

\end{proof}
\begin{remark}
As in Theorem \ref{thm:bccpoly}, we have similar relations among the polynomials: $\tau_j(x_1,\cdots,x_n)$ and  $\tau_j(x_1,\cdots,x_n)$, $\rho_j(x_1,\cdots,x_n)$ and $\rho_j(y_1,\cdots,y_n)$, $m_{\mathcal{A}}(\lambda,x_1,\cdots,x_n)$ and $m_{\mathcal{A}}(\lambda,y_1,\cdots,y_n)$.

\end{remark}

\section{Defining equations of $\mathcal{P}$-characters}\label{sec:defequ}

In this section,  we will prove uniform definition equations for various characters. For the reader's conveniences, we will reformulate some basic properties of the group determinant and its irreducible factors in \cite{F1} and \cite{F2}.

Let $\mathcal{P}$ be a good partition of Cl($G$) with the identity contained in the partition class $\mathcal{P}_1$. Let $\hat{C}_1$ be the class sum of $\mathcal{P}_1$. To simplify the discussion, we will assume that $\lambda \hat{C}_1$ is the identity of $k\mathcal{P}$ for some $\lambda\in k$ in the rest.

Let $\mathscr{L}:kG\rightarrow M_\ell(k)$ be the left regular representation of $kG$.  Then  the matrix $M_G(x)$ of $G$ is the generic element $\sum_{g\in G} \mathscr{L}(g)x_g$ in $\mathscr{L}(kG)$. We assume that $k$ is the splitting field of $kG$ and its center $Z(kG)$. Let $\mbox{Cl}(G)=\{\mathscr{C}_i\}_{i=1}^s$ as in Section \ref{sec:Frobenius theory}. And $\mathfrak{t}_i$ is the representative of $\mathscr{C}_i$. Then there are $s$ irreducible representations $\mathfrak{l}_1,\cdots,\mathfrak{l}_s$ of $Z(kG)$ over $k$ and the degree of each $\mathfrak{l}_i$ is $1$. Since $Z(kG)$ is semisimple, the restriction of $\mathscr{L}$ to $Z(kG)$ is equivalent to $\oplus_{i=1}^s m_i \mathfrak{l}_i$, where $m_i$ is a positive integer.

Let $M$ be a square matrix. We will denote the determination of $M$ by $|M|$.
For elements $P,Q\in G$, we will denote $M_G(x)$ by $(x_{PQ^{-1}})$ and  denote the function $\Phi(x_{g_1},x_{g_2},,\cdots,x_{g_\ell})$ by $\Phi(x_R)$ or $\Phi(x)$.

\begin{definition}
We say that a matrix $M$ over a ring $S$ has  the same symmetry as $M_G(x)$ if $M$ is a specialization of $M_G(x)$ i.e. there is a map $f: \{x_g|g\in G\}\rightarrow S$ with the property that $M=f(M_G(x))=(f(x_{g_ig_j^{-1}}))$, where $S$ is set of entries of $M$.

\begin{example}\label{exa:symmetry}
Assume that the matrix $Y=(y_{g_ig_j^{-1}})_{g_i,g_j\in G}$ satisfies $y_{g_ig_j}=y_{g_jg_i}$ i.e. if $g_i$ is conjugate to $g_l$ then $y_{g_i}=y_{g_j}$.  Then there are $s$ independent variables $\{y_{d_i}\}_{i=1}^s$, where $d_i$ is the representative of $\mathscr{C}_i$. Let $S=\{y_{d_i}:i=1,\cdots s\}$ and we define $f:\{x_g|g\in G\}\rightarrow S$ by $f(x_g)=y_{d_i}$ if $g$ is conjugate to $d_i$. Then $Y=f(M_G(x))=(f(x_{g_ig_j^{-1}}))$. So $Y$ has the same symmetry as $M_G(x)$. In fact, $Y$ is also equal to $\sum_{i=1}^s \mathscr{L}(\hat{\mathscr{C}_i})y_{d_i}$.

\end{example}

\begin{lemma}\label{lem:detofxy=yx} Let $Y=(y_{g_ig_j^{-1}})_{g_i,g_j\in G}$ be as in above example. Then $|\lambda I_\ell - Y|=\prod_{j=1}^s (\lambda -\sum_{i=1}^s\mathfrak{l}_j(\hat{\mathscr{C}}_i)y_{d_i})^{m_j}$.
\end{lemma}
\begin{proof}
The matrix $Y$ can be written into the form $\sum_{i=1}^s \mathscr{L}(\hat{\mathscr{C}_i})y_{d_i}$. Since $\mathscr{L}$ to  is equivalent to $\oplus_{i=1}^s m_i \mathfrak{l}_i$, there is an invertible matrix $T$ such that \[T\mathscr{L}(\hat{\mathscr{C}_i})T^{-1}=Diag(\mathfrak{l}_1^{m_1}(\hat{\mathscr{C}_i}),\cdots,\mathfrak{l}_s^{m_s}(\hat{\mathscr{C}_i}))\]
for $1\leq i \leq s$. So \[|\lambda I_\ell -Y|=|\lambda I_\ell -\sum_{i=1}^s T\mathscr{L}(\hat{\mathscr{C}_i})T^{-1}y_{d_i}|=\prod_{j=1}^s (\lambda -\sum_{i=1}^s\mathfrak{l}_j(\hat{\mathscr{C}}_i)y_{d_i})^{m_j}.\]
\end{proof}

\end{definition}

Let $$\Theta_G(x)=|(x_{PQ^{-1}})|=\prod_i\Phi_i^{e_i}(x)$$ be a factorization of irreducible polynomials over the complex field $\mathbb{C}$.
Let $f_i$ be the degree of $\Phi_i(x)$. Then the coefficient of $x_{g_1}^{f_i}$ in $\Phi_i(x)$ is $1$(see section 2 in \cite{F2}). Denote by $\chi_i(g_j)$ the coefficient of $x_{g_1}^{f_i-1}x_{g_j}$ in $\Phi_i(x)$ for $g_j\neq g_1$ and set $\chi_i(g_1)=f_i$. In this way, we define a function \[\chi_i:G\rightarrow \mathbb{C}\mbox{ }(g\mapsto \chi_i(g))\] for each irreducible factor $\Phi_i(x)$ of $\Theta_G(x)$. As in \cite{F1}, we call this function $\chi_i$ the character of $\Phi_i(x)$.

Let $(z_{PQ^{-1}})=(x_{PQ^{-1}})(y_{PQ^{-1}})$, where $z_{PQ^{-1}}=\sum_{R\in G} x_{PR^{-1}}y_{RQ^{-1}}$.  Then  \[\Theta_G(z)=\Theta_G(x)\Theta_G(y).\] Furthermore, the matrix $(z_{PQ^{-1}})$ has the same symmetry as $M_G(x)$. So $$\Theta_G(z)=|(z_{PQ^{-1}})|=\prod_i\Phi_i^{e_i}(z).$$ With these notations, we have the following

\begin{lemma}\label{lem:composable}
 Let $\Phi(x)$ be a homogenous polynomial in $k[x_{g_1},\cdots,x_{g_\ell}]$ with $\Phi(\varepsilon)=1$, where $\varepsilon=(1,0,\cdots,0)$. Then $\Phi(x)$ is a product of powers of irreducible factors of $\Theta_G(x)$ if and only if
  $\Phi(z)=\Phi(x)\Phi(y)$.
\end{lemma}
\begin{proof}
At first we assume that $\Phi(x)$ is irreducible factors of $\Theta_G(x)$. By above equations, we have
  $$\prod_i\Phi_i^{e_i}(z)=\prod_i\Phi_i^{e_i}(x)\Phi_i^{e_i}(y).$$
  This implies that $\Phi(z)=\Lambda(x)\Delta(y)$, where $\Lambda(x),\Delta(y)$ are products of those $\Phi_i(x)$ and $\Phi_j(y)$ respectively.
  Set $y_R=\delta_{ER}$. Then $z_R=\sum_S x_{RS^{-1}}y_S=x_R$ and $\Phi(z)=\Phi(x)=\Lambda(x)\Delta(\varepsilon)$, where $\varepsilon=(1,0,\cdots,0)$. Similarly, we have $\Phi(y)=\Lambda(\varepsilon)\Delta(y)$. Hence $$\Phi(z)=\Lambda(x)\Delta(y)=\Lambda(x)\Delta(\varepsilon)\Delta(y)\Lambda(\varepsilon).$$ Since
  $\Lambda(\varepsilon)\Delta(\varepsilon)=\Phi(\varepsilon)=1$, we have $\Phi(z)=\Phi(x)\Phi(y)$. So we have $\Phi(z)=\Phi(x)\Phi(y)$ in the case that $\Phi(x)$ is a product of powers of irreducible factors of $\Theta_G(x)$.

   If $\Phi(\varepsilon)=1$ and $\Phi(z)=\Phi(x)\Phi(y)$, then we take the matrix $(y_{PQ^{-1}})$ to be the adjoint $(x_{PQ^{-1}})^*$ of the matrix $(x_{PQ^{-1}})$. So
   \begin{eqnarray*}
     (z_{PQ^{-1}})&=&(x_{PQ^{-1}})(y_{PQ^{-1}})=(x_{PQ^{-1}})(x_{PQ^{-1}})^\ast\\
     &=&\mbox{Diag}(\Theta_G(x),\cdots,\Theta_G(x)) =\Theta_G(x)I.
   \end{eqnarray*} Then $\Phi(z)=\Phi(\Theta_G(x),0\cdots,0)=\Theta_G^f(x)$, where $f$ is the degree of $\Phi(x)$. On the other hand, we have
   $\Phi(z)=\Phi(x)\Phi(y)$. So $\Phi(x)$ is a factor of $\Theta_G^f(x)$ and it is  a product of powers of irreducible factors of $\Theta_G(x)$.

  \end{proof}

\begin{theorem}\label{thm:classfunction}\cite{F2}
Let $\chi$ be the character of an irreducible factor $\Phi(x)$ of $\Theta_G(x)$. Then $\chi(AB)=\chi(BA)$ for $A,B\in G$.
\end{theorem}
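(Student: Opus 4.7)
The plan is to extract from Lemma~\ref{lem:composable} a first-order linear PDE satisfied by $\Phi$ for each $B\in G$, and then to compare the commutators of these operators acting on $\Phi$. I may assume $A,B\neq g_1$, since otherwise $AB=BA$ is immediate.

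For $B\neq g_1$, specialize in Lemma~\ref{lem:composable} by setting $y_{g_1}=1$, $y_B=\varepsilon$, and $y_V=0$ for all other $V$. Then $z_T=x_T+\varepsilon\,x_{TB^{-1}}$, so Taylor expansion gives
\[
\Phi(z)=\Phi(x)+\varepsilon\sum_{T\in G}x_{TB^{-1}}\,\frac{\partial\Phi}{\partial x_T}(x)+O(\varepsilon^2).
\]
Under the same specialization, only the monomials $y_{g_1}^f$ and $y_{g_1}^{f-1}y_B$ of $\Phi(y)$ contribute modulo $\varepsilon^2$, with coefficients $1$ and $\chi(B)$ respectively (by the very definition of $\chi$); hence $\Phi(y)=1+\varepsilon\chi(B)+O(\varepsilon^2)$. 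Matching coefficients of $\varepsilon^1$ in $\Phi(z)=\Phi(x)\Phi(y)$ yields the operator identity
\[
L_B\Phi=\chi(B)\Phi,\qquad L_B:=\sum_{T\in G}x_{TB^{-1}}\frac{\partial}{\partial x_T}.
\]

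A direct Leibniz-rule calculation using $\partial_T x_{SA^{-1}}=\delta_{T,SA^{-1}}$ shows
\[
L_BL_A=L_{BA}+\sum_{T,S}x_{TB^{-1}}x_{SA^{-1}}\,\partial_T\partial_S,
\]
and the double-derivative term is symmetric in the dummy indices $(T,S)$, hence unchanged under $A\leftrightarrow B$. Therefore $[L_B,L_A]=L_{BA}-L_{AB}$. Applying this to $\Phi$: on the left, both $L_A$ and $L_B$ act on $\Phi$ as scalars $\chi(A),\chi(B)$ by the PDE, so $[L_B,L_A]\Phi=0$; on the right, $(L_{BA}-L_{AB})\Phi=(\chi(BA)-\chi(AB))\Phi$. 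Since $\Phi\not\equiv0$, this forces $\chi(AB)=\chi(BA)$.

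The only subtle point is the derivation of the PDE: one must verify that, under the specialization, no monomial of $\Phi(y)$ other than $y_{g_1}^f$ and $y_{g_1}^{f-1}y_B$ contributes to order $\varepsilon$. This is automatic because any surviving monomial has the form $y_{g_1}^{f-k}y_B^k$ and contributes $\varepsilon^k$, with the $k=0,1$ coefficients fixed by the definition of $\chi$. The rest of the argument is purely formal manipulation of partial derivatives.
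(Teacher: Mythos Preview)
Your proof is correct and takes a genuinely different route from the paper's. The paper specializes \emph{fully}, setting $y_R=\delta_{A,R}y_A$, so that $\Phi(y)=\theta(A)y_A^f$ for some scalar $\theta(A)$; from $\Phi(z)=\Phi(x)\Phi(y)$ and homogeneity it deduces the substitution identity $\Phi(x_{RA^{-1}})=\theta(A)\Phi(x_R)$ (and its left analogue), shows $\theta$ is a homomorphism, and concludes $\Phi(x_{A^{-1}RA})=\Phi(x_R)$, from which $\chi(AB)=\chi(BA)$ is read off at the $x_E^{f-1}$ level. You instead specialize \emph{infinitesimally}, extracting at first order in $\varepsilon$ the eigenfunction equation $L_B\Phi=\chi(B)\Phi$ directly from the definition of $\chi$, and then finish with the Lie-algebraic identity $[L_B,L_A]=L_{BA}-L_{AB}$.

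What each approach buys: the paper's argument gives more---the invariance $\Phi(x_{A^{-1}RA})=\Phi(x_R)$ of the whole polynomial, not merely of its linear part $\chi$, together with the auxiliary linear character $\theta$. Your approach is shorter and obtains the relation $\sum_T x_{TB^{-1}}\partial_T\Phi=\chi(B)\Phi$ immediately; this is precisely Frobenius's equation~(3) (with $R=B^{-1}$), which the paper derives only later and by a longer route through the cofactor identities of Theorem~\ref{thm:minorDef} and Corollary~\ref{cor:derivationofgrpdet}. A minor remark: your PDE also holds for $B=g_1$, since $L_{g_1}$ is the Euler operator and $\Phi$ is homogeneous of degree $f=\chi(g_1)$; so there is no need to treat $AB=g_1$ or $BA=g_1$ separately when invoking $L_{AB}\Phi=\chi(AB)\Phi$.
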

\begin{proof}
Let $(y_{g_1},\cdots,y_{g_\ell})=(0,\cdots,0,y_{g_s},0,\cdots,0)$, i.e. $y_R=\delta_{A,R}y_A$ in the matrix $(y_{PQ^{-1}})$, where $g_s=A$. Then $(y_{PQ^{-1}})$ is the product of $y_A$ and a permutation matrix $\mathscr{L}(A)$. So $$\Theta_G(0,\cdots,y_A,\cdots,0)=\Phi_1(y_A)\Phi_2(y_A)\cdots=|(y_{PQ^{-1}})|=\varepsilon y_A^\ell,$$ where $\varepsilon$ is $1$ or $-1$, $\ell$ is the order of $G$. Let $\Phi(y)$ be any irreducible factor of $\Theta_G(y)$. We also take $y_R=\delta_{A,R}y_A$ in $\Phi(y)$. Then $\Phi(y_A)=\theta(A)y_A^f\not=0$, where $f$ is the degree of $\Phi(z)$ and $\theta(A)$ is a constant. By Lemma \ref{lem:composable}, $\Phi(z)=\Phi(x)\Phi(y)$. We take $y_R=\delta_{A,R}y_A$ in this equation. Then $z_R=\sum_Bx_{RB^{-1}}y_B=x_{RA^{-1}}y_A$. Since $\Phi(z)$ is a homogenous polynomial, we have $\Phi(x_ {RA^{-1}}y_A)=y_A^f\Phi(x_{RA^{-1}})$ and
\[\Phi(z_R)=\Phi(x_{RA^{-1}}y_A)=y_A^{f}\Phi(x_{RA^{-1}})=\theta(A)\Phi(x_R)y_A^f.\] It follows the equation
$\Phi(x_{RA^{-1}})=\theta(A)\Phi(x_R)$. Similarly, $$\Phi(x_{A^{-1}R})=\theta(A)\Phi(x_R).\leqno(\star)$$
Given $B\in G$. Replacing $x_R$ by $x_{B^{-1}R}$ in equation ($\star$), it follows
$$\Phi(x_{B^{-1}A^{-1}R})=\theta(A)\Phi(x_{B^{-1}R})=\theta(A)\theta(B)\Phi(x_R).$$
Replacing $A$ by $AB$ in ($\star$), it follows
$\Phi(x_{B^{-1}A^{-1}R})=\theta(AB)\Phi(x_R).$
So we have  $\theta(AB)=\theta(A)\theta(B)$. In particular, $\theta(A)\theta(A^{-1})=1$.

Replacing $x_R$ by $x_{RA}$ in ($\star$) it follows
$$\Phi(x_{A^{-1}RA})=\theta(A)\Phi(x_{RA})=\theta (A) \theta(A^{-1}) \Phi(x_R) $$
This means
$$ \Phi(x_{A^{-1}RA})=\Phi(x_R).$$
Comparing the coefficients of $x_E^{f-1}x_{A^{-1}RA}$ on both sides of the last equation, it follows
$ \chi(AB)=\chi(BA), $ since $AB$ and $BA$ are conjugate in $G$.

\end{proof}

\begin{lemma}\label{lem:commut} \cite{F2}
Let $y_{g_1},\cdots,y_{g_\ell}$ be another system of variables. Then the matrix $(y_{P,Q})$ with $y_{P,Q}=y_{Q^{-1}P}$
commutes with the matrix $M_G(x)=(x_{PQ^{-1}})$.i.e.\[\sum_R x_{PR^{-1}}y_{Q^{-1}R}=\sum_S y_{S^{-1}P}x_{SQ^{-1}}.\]
\end{lemma}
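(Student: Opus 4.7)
The claim is really the matrix identity $AB = BA$, where $A = M_G(x)$ has $(P,Q)$-entry $x_{PQ^{-1}}$ and $B$ has $(P,Q)$-entry $y_{Q^{-1}P}$. Computing the entries directly, $(AB)_{P,Q} = \sum_R x_{PR^{-1}} y_{Q^{-1}R}$ is exactly the left-hand side of the asserted equation, and $(BA)_{P,Q} = \sum_S y_{S^{-1}P} x_{SQ^{-1}}$ is the right-hand side. So the plan is simply to verify this equality for each pair $(P,Q)$ by a suitable change of summation variable.

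On the left-hand side, I would introduce $T = PR^{-1}$. As $R$ ranges over $G$, so does $T$, and one recovers $R = T^{-1}P$, whence $Q^{-1}R = Q^{-1}T^{-1}P$. The sum therefore becomes $\sum_{T \in G} x_T\, y_{Q^{-1}T^{-1}P}$. On the right-hand side, I would introduce $U = SQ^{-1}$. As $S$ ranges over $G$, so does $U$, and $S = UQ$, so $S^{-1}P = Q^{-1}U^{-1}P$. The sum becomes $\sum_{U \in G} y_{Q^{-1}U^{-1}P}\, x_U$. These two expressions are identical (the summation variables $T$ and $U$ play the same role), so the lemma follows.

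There is no real obstacle here: the statement is essentially a bookkeeping observation about how the symmetry pattern of $M_G(x)$ interacts with the transposed indexing of $B$, and the two substitutions above make the match transparent. The only thing to be careful about is reading the indexing convention correctly — the entries of $B$ are $y_{Q^{-1}P}$ rather than $y_{PQ^{-1}}$, which is precisely what forces the order-reversal in the exponents of $T$ and $U$ above and makes the commutation work. No appeal to any earlier result in the paper is needed.
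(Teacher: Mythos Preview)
Your proof is correct and is essentially the same argument as the paper's: the paper fixes $P,Q$ and matches terms via the bijection $S = PR^{-1}Q$ (equivalently $PR^{-1} = SQ^{-1}$), then checks that $S^{-1}P = Q^{-1}R$, which is precisely your substitutions $T = PR^{-1}$ and $U = SQ^{-1}$ with $T = U$. The only difference is cosmetic --- you reduce both sides to a common expression, while the paper phrases it as a term-by-term matching.
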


\begin{proof}
Let $P,Q$ be fixed elements. If $x_{PR^{-1}}=x_{SQ^{-1}}$, then $SQ^{-1}=PR^{-1}$ and $S=PR^{-1}Q$. Hence $S^{-1}P=Q^{-1}R$ and $y_{S^ {-1}P}=y_{Q^{-1}R}$. This means that if $x_{PR^{-1}}=x_{SQ^{-1}}$, then $ x_{PR^{-1}}y_{Q^{-1}R}= y_{S^{-1}P}x_{SQ^{-1}}$. So $\sum_R x_{PR^{-1}}y_{Q^{-1}R}=\sum_S y_{S^{-1}P}x_{SQ^{-1}}$.
\end{proof}

It is well known that the regular representation $\mathscr{L}$ of $kG$ is equivalent to $\oplus_{i=1}^s L_i^{e_i}$, where $L_1,\cdots,L_s$ are non-equivalent irreducible representations of $kG$, $L_i^{e_i}=\overbrace{L_i\oplus\cdots\oplus L_i}^{e_i}$ and $\sum_{i=1}^s e_i^2=|G|$.

\begin{lemma}\label{lem:irrFacRep}
Let $\Theta_i(x)=|\sum_{g\in G}L_i(g)x_g|$. Then $\Theta_i(x)=\Phi_i(x)$ after suitable arranging the indexes.
\end{lemma}

\begin{proof}
By Lemma \ref{lem:composable}, $\Theta_i(x)$ is a power factor of $\Theta_G(x)$. By definition of group determinant, we have  $\Theta_G(x)=|\sum_{g\in G}\mathscr{L}(g)x_g|=\prod_{i=1}^s \Theta_i^{e_i}(x)$. By section 3 in \cite{F1} and section 9 in \cite{F2}, there are $s$ irreducible factors in the factorization of $\Theta_G(x)$ and the sum of the square of the multiplicity of  each irreducible factors is $|G|$. If $\Theta_i(x)$ is not irreducible for some $1\leq i \leq s$, the sum of the square of multiplicity of  each irreducible factors must be larger than $|G|$. So $\Theta_i(x)$ is irreducible and $\Theta_i(x)=\Phi_i(x)$ after suitable arranging the indexes.
\end{proof}

\begin{lemma}\label{lem:linearfactors}
Let $\mathcal{P}=\coprod_{i=1}^r \mathcal{P}_r$ be a good partition. If $x_g=x_{g'}$ when $g,g'$ belong to the same partition class $\mathcal{P}_j$, then $\Phi_i(x)$ is a power of a linear factor.
\end{lemma}

\begin{proof}
Let $g_j$ be the representative of $\mathcal{P}_j$.
If $x_g=x_{g'}$ when $g,g'$ belong to the same partition class $\mathcal{P}_j$, then $|\sum_{g\in G}L_i(g)x_g|=|\sum_{j=1}^r L_i(\hat{C_j})x_{g_j}|$, where $\hat{C_j}=\sum_{g\in \mathcal{P}_j} g$. Since $L_i$ is irreducible and $\hat{C_j} \in Z(kG)$, $L_i(\hat{C_j})$ is a multiple of the identity matrix. So $\Phi_i(x)$ is a power of a linear factor.
\end{proof}

\begin{lemma}\label{lem:charCorespondence} If $\Phi_i(x)=|\sum_{g\in G}L_i(g)x_g|$, then the function $\chi_i$ defined by $\Phi_i(x)$ is the character of the irreducible representation $L_i$.
\end{lemma}

\begin{proof}
By the orthogonal relations in section 3 of \cite{F2} and Theorem \ref{thm:classfunction},  the functions $\chi_i s$ defined by irreducible factors $\Phi_is$ are exactly the irreducible characters of $G$ over  $k$. By definition of $\chi_i$, the factor $\Phi_i(x)$ has the following expression: $\Phi_i(x)=x_{g_1}^{f_i}+\sum_{g\not= g_1}\chi_i(g)x_{g_1}^{f_i-1}x_g+*$. Let $g_i$ be the representative of the conjugate class $\mathscr{L}_i$. If $x_{g}=x_{g'}$ when $g,g'$ are conjugate in $G$, by Theorem \ref{thm:classfunction}, $\Phi_i(x)$ can be written into $\Phi_i(x)=x_{g_1}^{f_i}+\sum_{j=2}^s|\mathscr{L}_j|\chi_i(g_j)x_{g_1}^{f_i-1}x_{g_j}+*$. If we take the good partition to be the conjugate classes itself, by Lemma \ref{lem:linearfactors}, $\Phi_i(x)$ is a power of a monic linear factor by identifying $x_g=x_{g'}$ when $g,g'$ belong to the same conjugate class i.e. $\Phi_i(x)=(x_{g_1}+\sum_{j=1}^2\frac{|\mathscr{L}_j|\chi(g_j)}{\chi(1)}x_{g_j})^{f_i}$, where $\chi$ is the character of $L_i$. Since $f_i$ is the dimension of $L_i$, we have $f_i=\chi(1)$. By comparing the coefficient of $x_{g_1}^{f_i-1}x_g$ in two expressions of $\Phi_i(x)$, we have $\chi_i=\chi$.

\end{proof}
\begin{corollary}\label{thm:abbaExp}\cite{F2}
If we identity $x_{g}=x_{g'}$ when $g$ and $g'$ are conjugate in $G$, then $\Phi(x)=(\frac{1}{f}\sum\chi(R)x_R)^f$, where $f$ is the degree of $\Phi(x)$.
\end{corollary}
\begin{proof}
It follows from Lemma \ref{lem:linearfactors} and  Lemma \ref{lem:charCorespondence} directly.
\end{proof}

Define matrix $M_i$ to be $M_G(x)$ by taking $x_g=1$ whenever $g\in \mathcal{P}_i$ and $x_g=0$ otherwise. Then the map $$\mathfrak{r}:k\mathcal{P}\rightarrow M_l(k)$$ defined by $\mathfrak{r}(\hat{C}_i)= M_i$ is a faithful representation of the partition algebra $k\mathcal{P}$. In fact, $\mathfrak{r}$ is the restriction of $\mathscr{L}$ to the subalgebra $k\mathcal{P}$. Let $g_i$ be a representative in $\mathcal{P}_i$.

\begin{proposition}\label{lem:samesym} Let $(z_{PQ^{-1}})=(x_{PQ^{-1}})(y_{PQ^{-1}})$. Let $\mathcal{P}={\mathcal{P}_1\cup\cdots\cup\mathcal{P}_n}$ be a good partition of Cl$(G)$. If the matrices $(x_{PQ^{-1}})$ and $(y_{PQ^{-1}})$ satisfy the condition $x_R=x_T$ and $y_R=y_T$ whenever $R,T$ belong to the same partition class, then $z_{PQ^{-1}}=\sum_R x_{PR^{-1}}y_{RQ^{-1}}=\sum_S y_{PS^{-1}}x_{SQ^{-1}}$ and $(z_{PQ^{-1}})$ has the same symmetry as $(x_{PQ^{-1}})$ i.e. $z_{R}=z_{T}$ whenever $R,T$ belong to the same partition class.
\end{proposition}
\begin{proof} The condition $y_R=y_T$ whenever $R,T$ belongs to the same partition class implies $ y_{AB}=y_{BA}$. So $(y_{PQ^{-1}})=(y_{Q^{-1}P})$. By  Lemma \ref{lem:commut}, $(x_{PQ^{-1}})$ commutes with $(y_{PQ^{-1}})$ i.e. $z_{PQ^{-1}}=\sum_R x_{PR^{-1}}y_{RQ^{-1}}=\sum_S y_{PS^{-1}}x_{SQ^{-1}}$.

  If the matrices $(x_{PQ^{-1}})$ and $(y_{PQ^{-1}})$ satisfy the condition $x_R=x_T$ and $y_R=y_T$ whenever $R,T$ belong to the same partition class, then $(x_{PQ^{-1}})=\sum_{l=1}^n x_{g_l}M_l$ and $(y_{PQ^{-1}})=\sum_{l=1}^n y_{g_l}M_l$. So
\begin{align*}
(z_{PQ^{-1}})&=(x_{PQ^{-1}})(y_{PQ^{-1}})\\
&=\sum_i\sum_j x_{g_i}y_{g_j}M_iM_j\\
&=\sum_i\sum_j x_{g_i}y_{g_j}\sum_{l=1}^n a_{lij} M_l\\
&=\sum_{l=1}^n (\sum_i\sum_j x_{g_i}y_{g_j}a_{lij}) M_l.
\end{align*}
This implies that $(z_{PQ^{-1}})$ has the same symmetry as $(x_{PQ^{-1}})$ i.e. $z_{R}=z_{T}$ whenever $R,T$ belong to the same partition class.

\end{proof}

\begin{definition}
 Define function $\chi_i^{\mathcal{P}}$ of $\Phi_i(x)$ relative to the good partition $\mathcal{P}$ as following:
\[\chi_i^{\mathcal{P}}(g)=\sum_{y\in \mathcal{P}_i} \chi_i(y),\]if $g$ belongs to $\mathcal{P}_i$.
We will call the function $\chi_i^{\mathcal{P}}$ the $\mathcal{P}$-character of $G$ relative to the partition $\mathcal{P}$.
\end{definition}

By Theorem \ref{thm:abbaExp}, if  we assume that  $x_R=x_S$ whenever $R,S$ belong to the same partition class, then $$\Phi_i(x)=(\frac{1}{f_i}\sum_{j=1}^n\chi_i^\mathcal{P}(g_{j})x_{g_{j}})^{f_i},\mbox{ and } \eta=\frac{1}{f_i}\sum_{j=1}^n\chi_i^\mathcal{P}(g_{j})x_{g_{j}},$$ where $\Phi(x)$ is an irreducible factor of $\Theta_G(x)$ with degree $f$, $\mathcal{P}={\mathcal{P}_1\cup\cdots\cup\mathcal{P}_n}$ is a good partition of Cl$(G)$ and $g_j$ is a representative of $\mathcal{P}_j$.

\begin{theorem}\label{thm:defeqn}
Let $\mathcal{P}={\mathcal{P}_1\cup\cdots\cup\mathcal{P}_n}$ be a good partition of Cl$(G)$ and let $g_i$ be a representative  of  $\mathcal{P}_i$. Then $${\chi_i^\mathcal{P}(g_t)\chi_i^\mathcal{P}(g_j)}= f_i\sum_{l=1}^n a_{ltj}\chi_i^{\mathcal{P}}(g_l) (1\leq t,j,l\leq n)$$ for a character $\chi_i^\mathcal{P}$ with respect to the partition $\mathcal{P}$, where $a_{ltj}$ is defined in Lemma \ref{basicEqs}.
\end{theorem}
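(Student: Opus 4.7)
The plan is to combine the ``$f$-th power'' formula for $\Phi_i$ under the partition-class symmetry (the consequence of Theorem \ref{thm:abbaExp} stated just before the theorem) with the multiplicativity $\Phi_i(z)=\Phi_i(x)\Phi_i(y)$ from Lemma \ref{lem:composable}, and then read off the stated identity by comparing coefficients of $x_{g_t}y_{g_j}$. The whole argument is essentially a polynomial-coefficient comparison once the right substitution is made.

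First I would introduce two independent families of variables $\{x_{g_i}\}_{i=1}^n$ and $\{y_{g_i}\}_{i=1}^n$ indexed by partition-class representatives, and extend them to functions on $G$ by setting $x_R=x_{g_i}$ and $y_R=y_{g_i}$ whenever $R\in\mathcal{P}_i$. With this choice both $(x_{PQ^{-1}})$ and $(y_{PQ^{-1}})$ fulfil the hypothesis of Lemma \ref{lem:samesym}, so their product $(z_{PQ^{-1}})$ has the same partition-class symmetry and, by the explicit computation in that lemma,
\[
z_{g_l}\;=\;\sum_{i,j=1}^n a_{lij}\,x_{g_i}y_{g_j},\qquad 1\le l\le n.
\]

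Next, since $(x_{PQ^{-1}})$, $(y_{PQ^{-1}})$, $(z_{PQ^{-1}})$ all have the same symmetry as $M_G$, Lemma \ref{lem:composable} gives $\Phi_i(z)=\Phi_i(x)\Phi_i(y)$. Applying the consequence of Theorem \ref{thm:abbaExp} recalled before the theorem statement, each of the three sides is an $f_i$-th power of a linear form, so the equation reads
\[
\left(\tfrac{1}{f_i}\sum_{l=1}^n\chi_i^{\mathcal{P}}(g_l)\,z_{g_l}\right)^{\!f_i}
\;=\;
\left(\tfrac{1}{f_i}\sum_{i=1}^n\chi_i^{\mathcal{P}}(g_i)\,x_{g_i}\right)^{\!f_i}
\left(\tfrac{1}{f_i}\sum_{j=1}^n\chi_i^{\mathcal{P}}(g_j)\,y_{g_j}\right)^{\!f_i}.
\]
Since the polynomial ring is a UFD and since the coefficient of $x_{g_1}$ (resp.\ $y_{g_1}$, $z_{g_1}$) in the bracketed linear forms equals $\tfrac{1}{f_i}\chi_i^{\mathcal{P}}(g_1)=1$, the $f_i$-th roots are uniquely determined (no root-of-unity ambiguity), so
\[
\tfrac{1}{f_i}\sum_{l=1}^n\chi_i^{\mathcal{P}}(g_l)\,z_{g_l}
\;=\;
\tfrac{1}{f_i^{2}}\!\left(\sum_{t=1}^n\chi_i^{\mathcal{P}}(g_t)x_{g_t}\right)\!\!\left(\sum_{j=1}^n\chi_i^{\mathcal{P}}(g_j)y_{g_j}\right).
\]
Substituting the expression for $z_{g_l}$ and multiplying by $f_i$ gives
\[
\sum_{l,t,j} f_i\,a_{ltj}\,\chi_i^{\mathcal{P}}(g_l)\,x_{g_t}y_{g_j}
\;=\;\sum_{t,j}\chi_i^{\mathcal{P}}(g_t)\chi_i^{\mathcal{P}}(g_j)\,x_{g_t}y_{g_j}.
\]
Comparing coefficients of the independent monomial $x_{g_t}y_{g_j}$ yields the asserted identity.

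The only delicate point is step three, where one extracts $f_i$-th roots of an equality of polynomials. This is where I expect the one subtlety to lie: one must confirm that the root-of-unity factor is $1$, which follows because the three linear forms are normalised so that their $x_{g_1}$-, $y_{g_1}$-, $z_{g_1}$-coefficients are all equal to $1$ (using $\chi_i^{\mathcal{P}}(g_1)=f_i$ from the definition and the standing assumption that the identity lies in a singleton partition class). Everything else is a mechanical coefficient comparison.
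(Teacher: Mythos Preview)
Your argument is correct and follows essentially the same route as the paper: impose the partition-class symmetry on $x$, $y$, and $z$, use the multiplicativity of the irreducible factor together with the $f_i$-th power formula from Theorem~\ref{thm:abbaExp} to obtain the linear relation $f_i\sum_l\chi_i^{\mathcal P}(g_l)z_{g_l}=(\sum_t\chi_i^{\mathcal P}(g_t)x_{g_t})(\sum_j\chi_i^{\mathcal P}(g_j)y_{g_j})$, substitute $z_{g_l}=\sum_{t,j}a_{ltj}x_{g_t}y_{g_j}$, and compare coefficients. The only cosmetic difference is how that linear relation is reached: the paper factors the full determinant $\Theta_G(z)=\Theta_G(x)\Theta_G(y)$ into linear forms and identifies the $x$- and $y$-factors by specialising $y=\varepsilon$ (resp.\ $x=\varepsilon$), whereas you invoke $\Phi_i(z)=\Phi_i(x)\Phi_i(y)$ from Lemma~\ref{lem:composable} directly and extract the unique normalised $f_i$-th root; both devices yield the same equation~(11) and the rest is identical.
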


\begin{proof}
Let $\Theta_G(x)=|(x_{PQ^{-1}})|=\prod_{i=1}^{s'}\Phi_i^{e_i}(x)$ be the factorization into irreducible polynomials over $k$. Then each $\Phi_i(x)$ define a character $\chi_i$. By Theorem \ref{thm:classfunction}, $\chi_i$ is class function of $G$. So $s'$ is less than the number $s$ of conjugate classes of $G$.  If we set $x_{BA}=x_{AB}$ in $\Phi_i(x)$, by Theorem \ref{thm:abbaExp}, $\Phi_i(x)=(\frac{1}{f_i}\sum_{j=1}^s|\mathscr{C}_j|\chi_i(d_j)x_{d_j})^{f_i}$, where $d_j$ is the representative of the conjugate class $\mathscr{C}_j$. Let $\eta_i=\frac{1}{f_i}\sum_{j=1}^s|\mathscr{C}_j|\chi_i(d_j)x_{d_j}$. Since $\chi_1,\cdots,\chi_{s'}$ are orthogonal to each other by (2) of Section 3 in \cite{F1} , $\eta_1,\cdots,\eta_{s'}$ are coprime to each other. If we set $x_{BA}=x_{AB}$ in $\Theta_G(x)$, then $\Theta_G(x)=\eta_1^{f_1}\cdots \eta_{s'}^{f_{s'}}$. By Lemma \ref{lem:detofxy=yx}, we also have  $\Theta_G(x)=\prod_{j=1}^s (\eta_j')^{m_j}$, where $\eta_j'=\sum_{i=1}^s\mathfrak{l}_j(\hat{\mathscr{C}}_i)x_{d_i}$.  Since $\mathfrak{l}_1,\cdots,\mathfrak{l}_s$ are all inequivalent representations of $Z(kG)$, $\eta_1',\cdots,\eta_s'$ are coprime to each other. So, by suitable arrangement, we have $s=s'$, $\eta_i=\eta_i'$ and $m_i=f_i$ for $1\leq i \leq s$. By comparing coefficients of $\eta_i$ and $\eta_i'$, we have $\frac{|\mathscr{C}_j|\chi_i(d_j)}{f_i}=\mathfrak{l}_i(\hat{\mathscr{C}}_j)$. Since $k\mathcal{P}$ is a subalgebra of $Z(kG)$, the restriction of $\mathfrak{l}_i$ to $k\mathcal{P}$ is also an irreducible representation of $k\mathcal{P}$. So we have
\begin{align*}
\frac{\chi_i^\mathcal{P}(g_t)}{f_i}\frac{\chi_i^\mathcal{P}(g_j)}{f_i}&=\mathfrak{l}_i(\hat{C}_t)\mathfrak{l}_i(\hat{C}_j)
=\mathfrak{l}_i(\hat{C}_t\hat{C}_j)\\
&=\mathfrak{l}_i(\sum_{l=1}^n a_{ltj}\hat{C}_l)\\
&=\sum_{l=1}^n a_{ltj}\mathfrak{l}_i(\hat{C}_l)\\
&=\sum_{l=1}^n a_{ltj}\frac{\chi_i^\mathcal{P}(g_l)}{f_i}.
\end{align*}
And it follows that ${\chi_i^\mathcal{P}(g_t)\chi_i^\mathcal{P}(g_j)}= f_i\sum_{l=1}^n a_{ltj}\chi_i^{\mathcal{P}}(g_l)$.
\end{proof}

\begin{definition}\label{def:char}
The definition equations of characters relative to a good partition $\mathcal{P}={\mathcal{P}_1\cup\cdots\cup\mathcal{P}_n}$
are  the following series of equations:
\[ x_ix_j=\frac{1}{\lambda_\mathcal{P}}x_1\sum_l a_{lij}x_l, 1\leq i,j,l\leq n,\] where  $\lambda_\mathcal{P}$ is the identity constant of $k\mathcal{P}$. The affine variety defined by these equations are called $\mathcal{P}$-character variety with respect to the good partition $\mathcal{P}$. We denote it be $V_\mathcal{P}(G)$.
\end{definition}

The definition equations in \ref{def:char} unify various definition equations of characters. Next we will show some definition equations of characters in case that the corresponding good partition $\mathcal{P}$ satisfes additional conditions. These definition equations can be derived from  definition equations in \ref{def:char}.


 Let $\mathcal{P}={\mathcal{P}_1\cup\cdots\cup\mathcal{P}_n}$ be a good partition with $g_i$ as a representative of $\mathcal{P}_i$ and $g_1=E$.
 We will keep these notations in the following remarks. If $\mathcal{P}_1=\{E\}$, then $\lambda_{\mathcal{P}}=1$.

\begin{remark}
If each partition class $\mathcal{P}_j$ consists of conjugate classes with the same length, then $\chi^\mathcal{P}(g_j)= h_j\sum_{t=1}^l\chi(g_{j_t})$, where $\mathcal{P}_j=\{\mathscr{C}_{j_1},\cdots,\mathscr{C}_{j_l}\}$ and $h_j$ is the common length of conjugate classes contained in $\mathcal{P}_j$ and $\lambda_\mathcal{P}=1$.  Then the definition equations in \ref{def:char} can be written in the following form: $h_ix_ih_jx_j=x_1\sum_l a_{lij}h_lx_l, 1\leq i,j,l\leq r$. In particular, we take $\mathcal{P}$ to be the trivial partition of Cl($G$) i.e. each partition class is exactly to be a conjugate class of $G$. Then we get the definition equations in \cite{F2} $$h_ix_ih_jx_j=x_1\sum_l h_{l'ij}x_l, 1\leq i,j,l\leq r,$$
where $h_{l'ij}=\sharp\{(R,S,T)\in \mathscr{C}_i\times \mathscr{C}_j\times \mathscr{C}_{l'}\mid RST=E\}=a_{lij}h_l$.
\end{remark}

\begin{remark}
Let $\mathcal{P}$ be the partition consistting of $k$-classes. Then each $\mathcal{P}$ is a $k$-class and the conjugate classes in a $\mathcal{P}_i$ have the same length. In this case, $\lambda_\mathcal{P}=1$. Then the definition equations in \ref{def:char} can be written into $$h_ix_ih_jx_j=x_1\sum_l h_{l'ij}x_l, 1\leq i,j,l\leq r \leqno(13),$$ where $h_i$ is the length of some conjugate class in $\mathcal{P}_i$ and $h_{l'ij}=\sharp\{(R,S,T)\in \mathcal{P}_i\times \mathcal{P}_j\times \mathcal{P}_{l'}\mid RST=E\}=a_{lij}h_l$.

\end{remark}

\begin{remark}
Let $\mathcal{P}$ be partition in \ref{exa:goodpart2}. Then $\lambda_\mathcal{P}=\frac{1}{|N|}$. In order to see the characters define by the partition $\mathcal{P}$ as character of $G$, we set $\chi^P(g_i)=q_i\chi(g_i)$, where $q_i$ is the number of elements contained in $\mathcal{P}_i$. Then the equations in \ref{def:char} can be write into the following form:
\[q_ix_iq_jx_j=|N|x_1\sum_l q_{l'ij}x_l, 1\leq i,j,l\leq r,\] where $q_{l'ij}=\sharp\{(R,S,T)\in \mathcal{P}_i\times \mathcal{P}_j\times \mathcal{P}_{l'}\mid RST=E\}=a_{lij}q_l$. In fact, they are definition equations of characters of $G/N$.

\end{remark}


\section{Realization of $\mathcal{P}$-characters}\label{sec:realization}

In this section, we will determine the character variety by the representation variety of the correponding algebra of the good partition. Then orthogonal relations of these characters are derived from the representation matrix and its invertible matrix. Based on the definition equations and the orthogonal relations, we will give the formula for $m_i$ of the characters and prove that $m_i$ is the multiplicity of a linear factor in $\Theta_G(x)$ in case that $x_A=x_B$ whenever $A$ and $B$ belong to the same partition class of $\mathcal{P}$.

Let $\mathcal{P}={\mathcal{P}_1\cup\cdots\cup\mathcal{P}_n}$ be a good partition of Cl($G$) with $g_i$ as a representative of $\mathcal{P}_i$ and $g_1=E$. As in section \ref{sec:Frobenius theory}, the partition class sums  $\hat{C}_1,\cdots,\hat{C}_n$ consist of a basis of the partition algebra  $k\mathcal{P}$. Let $\mathcal{L}:k\mathcal{P}\rightarrow M_n(k)(\hat{C}_j\mapsto \mathcal{A}_j)$ be regular representation of $k\mathcal{P}$.
By Theorem \ref{thm:semisimPart}, $k\mathcal{P}$ is semisimple. Since $k\mathcal{P}$ is commutative and semisimple,  there exists an invertible matrix $\mathcal{M}$ such that
\[\mathcal{M}\mathcal{A}_i\mathcal{M}^{-1}=\begin{pmatrix}
\lambda_{i1}& 0& 0&\cdots &0\\
0& \lambda_{i2}& 0&\cdots &0\\
\cdot& \cdot& \cdot&\cdots &\cdot\\
0& 0& 0&\cdots &\lambda_{in}\\
\end{pmatrix}\] for $1\leq i \leq n$.
Then \[\mathfrak{r}_j:k\mathcal{P}\longrightarrow k(\hat{C}_i\mapsto \lambda_{ij}),1\leq j\leq n\] is an irreducible representation of $k\mathcal{P}$. Let $\gamma_t=(\lambda_{1t},\cdots,\lambda_{nt})$, where $1\leq t\leq n$.
Then $\gamma_t$ is a solution of the following series of equations:
\begin{equation*}x_jx_l=\sum_{i=1}^n a_{ijl} x_i,1\leq i,j,l\leq n.\leqno(1)\label{equ:defirrep}\end{equation*}
Let $\gamma_t(x)=\sum_{i=1}^n \lambda_{it}x_i$ for $1\leq t \leq n$.
By Theorem \ref{Property:A}, $\gamma_1,\cdots,\gamma_n$ are the exact $n$ solutions of above series of equations.
Let $\chi_t^\mathcal{P}$ be a character with respect to $\mathcal{P}$. By Theorem \ref{thm:defeqn}, $(\chi_t^\mathcal{P}(g_1),\cdots,\chi_t^\mathcal{P}(g_n))$ is a solution of the equations
\[ x_ix_j=f_t\sum_l a_{lij}x_l, 1\leq i,j,l\leq n£¬\leqno(2)\label{equ:Relizationchar}\]where $f_t$ is the degree of the corresponding irreducible factor of $\Theta_G(x)$.
 So $$(\frac{\chi_t^\mathcal{P}(g_1)}{f_t},\cdots,\frac{\chi_t^\mathcal{P}(g_n)}{f_t})$$ is  a solution of equations (1).   By Corollary \ref{cor:rvariety}, there exists  $\gamma_t$ such that $\gamma_t=(\frac{\chi_t^\mathcal{P}(g_1)}{f_t},\cdots,\frac{\chi_t^\mathcal{P}(g_n)}{f_t})$ . This implies that $\chi_t^\mathcal{P}(g_j)=f_t\lambda_{jt}$ for $1\leq j\leq n$.

\begin{theorem} The character variety $V_\mathcal{P}(G)$ consists of all multiples of elements in  $V_k(k\mathcal{P})$ i.e.
$V_\mathcal{P}(G)=\{\alpha\gamma|\alpha\in k, \gamma\in V_k(k\mathcal{P}) \}$. In particular, $V_\mathcal{P}(G)$  consists of $n$ irreducible components and the dimension of $V_\mathcal{P}(G)$ is $1$.

\end{theorem}
\begin{proof}
 Let $\chi_t=(f_t\lambda_{1t},\cdots,f_t\lambda_{nt})$ for $1\leq t\leq n$. Then
 the statement $\chi_{t}$ is a solution of the equations  in Definition \ref{def:char}  follows from above analysis and Theorem \ref{thm:defeqn}.
          Let $\beta=(\beta_1,\cdots,\beta_n)$ be the any nonzero solution of the following equations
 \[ x_ix_j=\frac{1}{\lambda_{\mathcal{P}}}x_1\sum_l a_{lij}x_l, 1\leq i,j,l\leq n.\leqno(3)\label{equ:Relizationchar}\]  Then there some $\beta_i\not= 0$ and  $0\not=\beta_i\beta_i=\frac{1}{\lambda_{\mathcal{P}}}\beta_1\sum_l a_{lij}\beta_l$. This implies that $\beta_1\not= 0$.
 So $\frac{1}{\beta_1}\beta$ is a solution of (1). By Corollary \ref{cor:rvariety}, $\beta$ is a multiple of $\chi_t$. Thus
  any solution of  the equations  in Definition \ref{def:char} is of the form $\alpha g_t$ for some $\alpha\in k$ and $1\leq t\leq n$.
  So $V_\mathcal{P}(G)=\{\alpha\gamma|\alpha\in k, \gamma\in V_k(k\mathcal{P}) \}$. Since $V_\mathcal{P}(G)$ is the union of $n$ $1$-dimensional vector spaces, the other statements follow.
\end{proof}

 By equation (\ref{eleDeterm}) in section \ref{sec:Frobenius theory} and Lemma \ref{basicEqs}, the following  equation of determinant holds true
\begin{equation*}\label{equ:chi1}|\sum_{t}\ell_{ij't}x_t-\ell_{ij'}x|_{1\leq i,j\leq n}=\prod_{t=1}^n\ell_t(\gamma_t(x)-x).\end{equation*}And so
 \begin{equation*}\label{equ:basicdeterminant}|\sum_{t}\frac{\ell_{ij't}}{\ell_i}x_t-\frac{\ell_{ij'}}{\ell_i}x|_{1\leq i,j\leq n}=\prod_{t=1}^n(\gamma_t(x)-x).\end{equation*}

Assume that $$\Theta_G(x)=|M_G(x)|=\prod_i\Phi_i^{c_i}(x)\leqno(Dequ)$$ is a factorization of irreducible polynomials over complex field $\mathbb{C}$ and the degree of $\Phi_i(x)$ is $f_i$.


\begin{theorem}\label{thm:satequ}
For $1\leq t\leq n$, the following hold:
\begin{itemize}
\item[(i)] $\chi^\mathcal{P}_t(g_1)=\lambda_\mathcal{P}f_t$;
\item[(ii)] $\lambda_{ij}$ is an algebraic integer for $1\leq i,j\leq s$;
\end{itemize}
\end{theorem}
\begin{proof}
 By taking $x_1=1$ and $x_2=\cdots=x_n=0$ in both sides of equation (3) in section \ref{sec:Frobenius theory}, we have $(\lambda_{\mathcal{P}}-x)^n=\prod_{t=1}^n(\frac{\chi^\mathcal{P}_t(g_1)}{f_t}-x)$. So  $\chi^\mathcal{P}_t(g_1)=\lambda_\mathcal{P}f_t$ and follows from (i) directly.

By taking $x_t=1$ and $x_i=0$ for $i\not=t$ on both sides of equation (3), it follows
$|(a_{i'j't})-x I|= \prod_{\alpha=1}^n(\lambda_{t\alpha}-x)$. This implies $\lambda_{t\alpha}$ is a root of monic polynomial with rational integers as coefficients since $a_{i'j't}$ is rational integer and (ii) holds.

\end{proof}

Let  $\mathcal{P}=\coprod_{i=1}^n\mathcal{P}_i$ be a good partition of  $\mbox{Cl}(G)$, where 
$\mathcal{P}_j=\{\mathscr{C}_{j_1},\cdots,\mathscr{C}_{j_l}\}$ for $1\leq j\leq n$. Then the partition algebra $k\mathcal{P}$ is a subalgebra of the center $Z(kG)$ of the group algebra $kG$. Let $\chi_1,\cdots,\chi_s$ be all irreducible characters of $G$ over $k$. Then each irreducible character $\chi_i$ defines an primitive idempotent $\mathfrak{e}_i=\frac{\chi_i(1)}{|G|}\sum_{x\in G} \chi_i(x^{-1})x(1\leq i \leq s)$ in $Z(kG)$ and $1=\mathfrak{e}_1+\cdots +\mathfrak{e}_s$. Let $E_j(1\leq j\leq n)$ be all primitive idempotents of $k\mathcal{P}$. Since $k\mathcal{P}$ and $Z(kG)$ have the same identity, then $E_j= \mathfrak{e}_{j_1}+\cdots +\mathfrak{e}_{j_{\alpha_j}}$ and $\alpha_1+\cdots +\alpha_n=s$. According to the coefficient of $g$ in $E_j$, we define a function $\theta_j:G\rightarrow k$ such that $\theta_j(g)=\sum_{i=1}^{\alpha_j} \chi_{j_i}(1)\chi_{j_i}(g^{-1})$. Then $\theta_1,\cdots,\theta_n$ are super characters determined by the good partition $\mathcal{P}$( see \cite{DI}). Since $E_1,\cdots,E_n$ and $\hat{C_1},\cdots,\hat{C_n}$ are two basis of $k\mathcal{P}$, $\theta_j$ is constant on a partition class $\mathcal{P}_i$, i.e. $\theta_j(g)=\theta_j(g')$ if $g,g'$ belongs to the same partition class. We have the following transition matrix

\[(E_1,\cdots,E_n)=(\hat{C}_1,\cdots,\hat{C_n})A,\] where \[A=\frac{1}{|G|}\begin{pmatrix}
\theta_1(g_1)& \theta_2(g_1)& \theta_3(g_1)&\cdots &\theta_n(g_1)\\
\theta_1(g_2)& \theta_2(g_2)& \theta_3(g_2)&\cdots &\theta_n(g_2)\\
\cdot& \cdot& \cdot&\cdots &\cdot\\
\theta_1(g_n)& \theta_2(g_n)& \theta_3(g_2)&\cdots &\theta_n(g_n)\\
\end{pmatrix},\] where $g_i$ is a representative of $\mathcal{P}_i$.
By the orthogonal relations of irreducible characters, the  matrix $A^{-1}$ is

\[
\begin{pmatrix}
\frac{\ell_1\theta_1(g_1^{-1})}{\theta_1(1)}& \frac{\ell_2\theta_1(g_2^{-1})}{\theta_1(1)}& \frac{\ell_3\theta_1(g_3^{-1})}{\theta_1(1)}&\cdots &\frac{\ell_n\theta_1(g_n^{-1})}{\theta_1(1)}\\
\frac{\ell_1\theta_2(g_1^{-1})}{\theta_2(1)}& \frac{\ell_2\theta_2(g_2^{-1})}{\theta_2(1)}& \frac{\ell_3\theta_2(g_3^{-1})}{\theta_2(1)}&\cdots &\frac{\ell_n\theta_2(g_n^{-1})}{\theta_2(1)}\\
\cdot& \cdot& \cdot&\cdots &\cdot\\
\frac{\ell_1\theta_n(g_1^{-1})}{\theta_n(1)}& \frac{\ell_2\theta_n(g_2^{-1})}{\theta_n(1)}& \frac{\ell_3\theta_n(g_3^{-1})}{\theta_n(1)}&\cdots &\frac{\ell_n\theta_n(g_n^{-1})}{\theta_n(1)}\\
\end{pmatrix},\] where $\ell_i$ is the length of $\mathcal{P}_i$.

The generic polynomial $g_{k\mathcal{P}}(y_1,\cdots,y_n)$ of $k\mathcal{P}$ with respect to the basis $E_1,\cdots,E_n$ is $y_1y_2\cdots y_n$. Applying Theorem \ref{thm:bccpoly}, replacing $y_i$ by $\frac{\ell_1\theta_i(g_1^{-1})}{\theta_i(1)}x_1+\frac{\ell_2\theta_i(g_2^{-1})}{\theta_i(1)}x_2+ \frac{\ell_3\theta_i(g_3^{-1})}{\theta_i(1)}x_3+\cdots +\frac{\ell_n\theta_i(g_n^{-1})}{\theta_i(1)}x_n$ we get the generic polynomial  of $k\mathcal{P}$ with respect to the basis $\hat{C}_1,\cdots,\hat{C}_n$ i.e. $$g_{k\mathcal{P}}(x_1,\cdots,x_n)=\prod_{i=1}^n g_i(x_1,\cdots,x_n),$$ where $g_i(x_1,\cdots,x_n)=\frac{\ell_1\theta_i(g_1^{-1})}{\theta_i(1)}x_1+\frac{\ell_2\theta_i(g_2^{-1})}{\theta_i(1)}x_2+ \frac{\ell_3\theta_i(g_3^{-1})}{\theta_i(1)}x_3+\cdots +\frac{\ell_n\theta_i(g_n^{-1})}{\theta_i(1)}x_n$. Since $k\mathcal{P}$ is a commutative algebra, each $g_i(x_1,\cdots,x_n)$ define the irreducible representation $\mathfrak{r}_i$ of $k\mathcal{P}$ (arranging the indexes if necessary) such that $\mathfrak{r}_i(\hat{C}_j)=\frac{\ell_j\theta_i(g_j^{-1})}{\theta_i(1)}$ for $1\leq i \leq n$. This implies that each row of the matrix $A^{-1}$ is a solution of equations (1). For a good partition $\mathcal{P}$ of Cl($G$), we have the following relations between $\mathcal{P}$-characters and super characters determined by $\mathcal{P}$.

\begin{theorem}\label{thm:diffpchaAscha}
 Let $\Phi(x)$ be an irreducible factor of $\Theta_G(x)$ and let $\chi^\mathcal{P}$ be the $\mathcal{P}$-character defined by $\Phi(x)$. Then there is a super character $\theta$ determined by $\mathcal{P}$ such that $\chi^\mathcal{P}(g)=\frac{\chi^{\mathcal{P}}(1)\ell_g\theta(g^{-1})}{\theta(1)}$,where $\ell_g$ is the length of the partition class containing $g$.
\end{theorem}
\begin{proof}
Let $g_i$ be the representative of the partition class $\mathcal{P}_i$ for $1\leq i \leq n$. By section 9 in \cite{F2}, $\chi^{\mathcal(P)}(1)$ is the degree of $\Phi(x)$. Thus we have  shown that $(\frac{\chi^{\mathcal{P}}(g_1)}{\chi^{\mathcal{P}}(1)},\cdots,\frac{\chi^{\mathcal{P}}(g_n)}{\chi^{\mathcal{P}}(1)})$ is a solution of the equations (1) at the beginning of this section. Since $|A^{-1}|\not=0$, $n$ rows of $A^{-1}$ are exactly the solutions of equations (1). So
$(\frac{\chi^{\mathcal{P}}(g_1)}{\chi^{\mathcal{P}}(1)},\cdots,\frac{\chi^{\mathcal{P}}(g_n)}{\chi^{\mathcal{P}}(1)})$ is some row of $A^{-1}$.

\end{proof}

\begin{remark}
This Theorem shows that a $\mathcal{P}$-character is not a super character, even not a multiple of some super character.
\end{remark}

For each irreducible character $\chi_i$ of $G$ over $\mathbb{C}$, by Lemma \ref{lem:charCorespondence}, there is a unique irreducible factor of $\Phi_i$ of $\Theta_G(x)$ such that the coefficient of $x_{g_1}^{\chi_i(1)-1}x_g$ in $\Phi_i$ is $\chi_i(g)$ in the case that $g_1\not=g$ and the coefficient of $x_{g_1}^{\chi_i(1)}$ is $1$. The multiplicity of $\Phi_i$ in $\Theta_G(x)$ is $\chi_i(1)$ and $\Theta_G(x)=\prod_{i=1}^s\Phi_i^{\chi_i(1)}$(see\cite{F2}). According to the decomposition $E_j= e_{j_1}+\cdots +e_{j_{\alpha_j}}$, we define $\Theta_{E_j}(x)=\prod_{t=1}^{\alpha_j}\Phi_{j_t}^{\chi_{j_t}(1)}$. Then the degree of $\Theta_{E_j}(x)$ is $d_j=\sum_{t=1}^{\alpha_j}\chi_{j_t}^2(1)$ and $\Theta_G(x)=\prod_{j=1}^n\Theta_{E_j}(x)$.  The coefficient of $x_{g_1}^{d_j-1}x_g$ in $\Theta_{E_j}(x)$ is $\theta_j(g^{-1})$.

\begin{theorem}
 If $x_g=x_{g'}$ when $g$ and $g'$ belong to the same partition class, then $\Theta_{E_j}(x)=(\sum_{i=1}^n\frac{\ell_i\theta_j(g_i^{-1})}{\theta_j(1)}x_{g_i})^{d_j}$ for $1\leq j\leq n$, where $g_i$ is the representative of the partition class $\mathcal{P}_i$.
\end{theorem}
\begin{proof}
Let $\mathscr{L}$ be the regular representation  of $kG$. Then $\mathscr{L}$ is equivalent to $\oplus_{i=1}^s L_i^{e_i}$, where $L_1,\cdots,L_s$ are non-equivalent irreducible representations of $kG$, $L_i^{e_i}=\overbrace{L_i\oplus\cdots\oplus L_i}^{e_i}$. Assume that $E_j= \mathfrak{e}_{j_1}+\cdots +\mathfrak{e}_{j_{\alpha_j}}$ be the decomposition of primitive idempotents in $Z(kG)$ and $L_{j_t}(\mathfrak{e}_i)=\delta_{j_t,i}I$. Set $\mathscr{L}_j=\oplus_{t=1}^{\alpha_j}L_{j_t}^{e_{j_t}}$. By Lemma \ref{lem:irrFacRep}, we have $|\sum_{g\in G} \mathscr{L}_j(g)x_g|=\Theta_{E_j}(x)$.  If $x_g=x_{g'}$ when $g$ and $g'$ belong to the same partition class, then $|\Theta_{E_j}(x)=\sum_{i=1}^n \mathscr{L}_j(\hat{C}_i)x_{g_i}|$. On the other hand, we have
\[(\hat{C}_1,\cdots,\hat{C_n})=(E_1,\cdots,E_n)A^{-1}.\]  Since $\mathscr{L}_j(E_i)=\delta_{ij}I$, we have \[|\Theta_{E_j}(x)=\sum_{i=1}^n \mathscr{L}_j(\hat{C}_i)x_{g_i}|=|\sum_{i=1}^n\mathscr{L}_j(\frac{\ell_i\theta_j(g_i^{-1})}{\theta_j(1)}E_j)x_{g_i}|
=(\sum_{i=1}^n\frac{\ell_i\theta_j(g_i^{-1})}{\theta_j(1)}x_{g_i})^{d_j}.\]
\end{proof}



\section{The power factors of Group Determinant}\label{sec:powerfactors}
In this section, we assume that $k$ is a subfield of the complex field $\mathbb{C}$.

Let $\Phi(x)$ be a power factors of the group determinant which is a product of powers of irreducible factors of $\Theta_G(x)$. Define a function $\chi: G\rightarrow k$ such that $\chi(g)$ is the coefficient of the monomial $x_{g_1}^{f-1}$ in $\frac{\partial(\Phi(x))}{\partial x_g}$, where $f$ is the degree of $\Phi(x)$. The function $\chi$ is called power function of $\Phi(x)$. We will show in this section that $\Phi(x)$ can be constructed from its power function $\chi$. The method is due to Frobenius.

Let $u$ be an independent variable. We denote by $\Phi(x+u\varepsilon)$ the polynomial $\Phi(x_E+u,x_{g_2},x_{g_3},\cdots,x_{g_\ell})$. Then
$\Phi(x+u\varepsilon)$ can be written into the form:
$$\Phi(x+u\varepsilon)=u^f+\Phi_1u^{f-1}+\Phi_2u^{f-2}+\cdots+\Phi_f \leqno(1)$$
and $\Phi_r$ is a homogenous polynomial in $n$ variables $x_E,x_{g_2},x_{g_3},\cdots,x_{g_\ell}$. If $u=0$, then $\Phi_f=\Phi$.

\begin{proposition} For $1\leq r \leq f$, we have
  $$(i)\ \Phi_r=\frac{1}{(f-r)!}\frac{\partial^{f-r}\Phi}{\partial x_E^{f-r}},\quad (ii)\ \frac{\partial\Phi_{r+1}}{\partial x_E}=(f-r)\Phi_r. \leqno(2)$$
\end{proposition}
\begin{proof}
If we replace $x_{g_1}$ by $x_{g_1}+u$ in $\Phi(x)$, then only the monomial of $\Phi(x)$ with the form $x_E^a x_A^{l_A} x_B^{l_B}\cdots,\ a\geq f-r$ can affords $u^{f-r}$. In $(x_E+u)^a x_A^{l_A} x_B^{l_B}\cdots$, the monomial containing $u^{f-r}$ is $C_a^{f-r}u^{f-r}x_E^{a-f+r} x_A^{l_A} x_B^{l_B}\cdots$. On the other hand, we have $$C_a^{f-r}x_E^{a-f+r} x_A^{l_A} x_B^{l_B}\cdots=\frac{1}{(f-r)!}\frac{\partial^{f-r}(x_E^a x_A^{l_A} x_B^{l_B}\cdots)}{\partial x_E^{f-r}}.$$ Thus $\frac{1}{(f-r)!}\frac{\partial^{f-r}\Phi}{\partial x_E^{f-r}}$ is the coefficient of $u^{f-r}$ in $\Phi(x+u\varepsilon)$. And (i) follows by comparing with equation (1).
By (i), we have $\Phi_{r+1}=\frac{1}{(f-r-1)!}\frac{\partial^{f-r-1}\Phi}{\partial x_E^{f-r-1}}$. So $\frac{\partial\Phi_{r+1}}{\partial x_E}=\frac{1}{(f-r-1)!}\frac{\partial^{f-r}\Phi}{\partial x_E^{f-r}}=\frac{f-r}{(f-r)!}\frac{\partial^{f-r}\Phi}{\partial x_E^{f-r}}=(f-r)\Phi_r$.
\end{proof}

\begin{proposition}
  $$\Phi_1=\sum_{R\in G}\chi(R)x_R. \leqno(3)$$
\end{proposition}
\begin{proof}
  In $\Phi(x+u\varepsilon)$ only $(x_E+u)^f$ and $\chi(R)(x_E+u)^{f-1}x_R,\ R\neq E$ can attribute to monomials containing $u^{f-1}$.
The coefficient of $x_Eu^{f-1}$ in $(x_E+u)^f$ is $f$ and the coefficient of $u^{f-1}x_R$ in $\chi(R)(x_E+u)^{f-1}x_R$ is $\chi(R)$. Thus $\Phi_1=\sum_{R}\chi(R)x_R$.
\end{proof}

Assume that $\Phi(x+u\varepsilon)=u^f+\Phi_1(x)u^{f-1}+\cdots+\Phi_f(x)$ has the following factorization
$$\Phi(x+u\varepsilon)=(u+u_1)(u+u_2)\cdots(u+u_f). \leqno(4)$$

Let $g(u)$ be a polynomial with variable $u$ with $g(u)=a(u+v_1)(u+v_2)\cdots(u+v_r)$, where $a,v_1,v_2,\ldots,v_r$ are constants. Set \[(y_{PQ^{-1}})=g((x_{PQ^{-1}}))=a((x_{PQ^{-1}})+v_1I)((x_{PQ^{-1}})+v_2I)\cdots((x_{PQ^{-1}})+v_rI),\]where $I$ is the identity matrix.
 Then $g((x_{PQ^{-1}}))$ has the same symmetry as $M_G(x)$.   By Lemma \ref{lem:composable}, we have
$$\Phi(y)=a^f\Phi(x+v_1\varepsilon)\Phi(x+v_2\varepsilon)\cdots\Phi(x+v_r\varepsilon). \leqno(4)^\prime$$
By equation (4),  $\Phi(x+v_i\varepsilon)=(v_i+u_1)(v_i+u_2)\cdots(v_i+u_f)$. Since $g(u_j)=a(v_1+u_j)(v_2+u_j)\cdots(v_n+u_j)$,
\begin{eqnarray*}
  \Phi(y)&=& a^f\prod_{i=1}^r\Phi(x+v_2\varepsilon)\\
  &=&a^f\prod_{i=1}^r (v_i+u_1)(v_i+u_2)\cdots(v_i+u_f)\\
  &=&\prod_{j=1}^f\prod_{i=1}^ra(v_i+u_j)\\
  &=&g(u_1)g(u_2)\cdots g(u_f).
\end{eqnarray*}

Thus we prove the following theorem.

\begin{theorem}\label{thm:phiy}
   With notations above, we have
  $\Phi(y)=g(u_1)g(u_2)\cdots g(u_f)$.
\end{theorem}

 Let $g(u)=u^r+v$. Then $g((x_{PQ^{-1}}))=(x_{PQ^{-1}})^r+v I$. Applying Theorem \ref{thm:phiy} to polynomial $g(u)=u^r+v$ we have
$\Phi(y)=(v+u_1^n)(v+u_2^n)\cdots(v+u_f^n) $. On the other hand, the $(P,Q)$-entry of $(x_{PQ^{-1}})^r$ is
$$x_{PQ^{-1}}^{(r)}=\sum_{S_1\cdots S_{r-1}\in\mathfrak{H}}x_{PS_1}x_{S_1^{-1}S_2}\cdots x_{S_{r-1}^{-1}Q^{-1}}.$$ And $g((x_{PQ^{-1}}))=(x_{PQ^{-1}}^{(r)})+v I$. Since $g((x_{PQ^{-1}}))$ has the same symmetry as $M_G(x)$, it follows $$\Phi(x^{(r)}+v\varepsilon)\triangleq\Phi(y+v\varepsilon)=(v+u_1^r)(v+u_2^r)\cdots(v+u_f^r). \leqno(\ast)$$
We can also write $\Phi(x^{(r)}+v\varepsilon)$ into the following form
\[\Phi(x^{(r)}+v\varepsilon)=v^f+\Phi_1(x^{(r)})v^{f-1}+\Phi_2(x^{(r)})v^{f-2}+\cdots+\Phi_f(x^{(r)}).\leqno(\ast\ast)\]
By equation (3), we have $\Phi_1(x^{(r)})=\sum_{R\in G} \chi(R)x_R^{(r)}$. Let $S_r=\Phi_1(x^{(r)})$. Then $$S_r=\sum_{R_1,\ldots,R_r}\chi(R_1\cdots R_r)x_{R_1}\cdots x_{R_r},\leqno(5)$$ where $R_1,\cdots,R_r$ runs over $G$ independently. By comparing the coefficient of $v^{f-1}$ in the equations ($\ast$) and ($\ast\ast$), we also have $$S_n=\sum_R \chi(R)x_R^{(n)}=u_1^n+u_2^n+\cdots+u_f^n. \leqno(6)$$

By comparing coefficient of $u^i$ of (4) and (6), we have

\begin{equation}\nonumber
  \begin{array}{ll}
  \Phi_1=u_1+\cdots+u_f & S_1=u_1+\cdots+u_f\\
  \Phi_2=\sum_{1\leq i\neq j\leq f}u_iu_j & S_2=u_1^2+\cdots+u_f^2\\
 \vdots & \vdots\\
  \Phi_n=\sum_{i_1\neq i_2\neq\cdots\neq i_n}u_{i_1}u_{i_2}\cdots u_{i_n} & S_n=u_1^n+\cdots+u_f^n\\
  \vdots & \vdots\\
  \Phi_f=u_1\cdots u_f & S_f=u_1^f+\cdots+u_f^f
  \end{array}
\end{equation}
By the formula on power sum and product sum in \cite{MA}, we have

\begin{theorem}\label{thm:formulaI} For $1\leq r\leq f$,
  $$(-1)^r\Phi_r=\sum\frac{(-1)^{a+b+c+\cdots}S_1^a S_2^b S_3^c\cdots}{1^a2^b3^c\cdots a!b!c!\cdots} \leqno(7)$$
  where $a,b,c,\ldots$ run over all positive integers such that satisfies $a+2b+3c+\cdots=r$.
 If $r>f$, then $\Phi_r=0$.
\end{theorem}

Set $G^r=\overbrace{G\times G\times\cdots\times G}^r$. Let $\mathfrak{S}_r$ be the symmetric group on $\{1,\cdots,r\}$.  Let $\tau=(i_1,i_2,\cdots,i_{t_1})(i_{t_1+1},\cdots,i_{t_2})\cdots(i_{t_{s-1}+1},\cdots,i_{t_s})$ be an element of $\mathfrak{S}_r$.   For $(g_1,\cdots,g_r)\in G^r$, we define
\[\chi((g_1,\cdots,g_r)^\tau)=sgn(\tau)\chi(g_{i_1}g_{i_2}\cdots g_{i_{t_1}})\chi(g_{i_{t_1+1}}\cdots g_{i_{t_2}})\cdots \chi(g_{i_{t_{s-1}+1}}\cdots g_{i_{t_s}}).\]

\begin{definition}\label{def:l-char}
The $r$-character $\chi^{(r)}$ of $\chi$ is a function on $G^r$ which is defined by
\[\chi^{(r)}((g_1,g_2,\cdots,g_r))=\sum_{\tau \in \mathfrak{S}_r}\chi((g_1,g_2,\cdots,g_r)^\tau),\] for any $(g_1,g_2,\cdots,g_r)\in G^r$.
The $1$-character $\chi^{(1)}$ is $\chi$.
\end{definition}

Let $\tau$ be an element in $\mathfrak{S}_r$. The type of decomposition of cycles of $\tau$ has  $c_1$ $1$-cycles,$\cdots$,$c_t$ $t$-cycles and $1c_1+\cdots+tc_t=r$. Then \begin{eqnarray*}sgn(\tau)
&=&(-1)^{c_1(1-1)+c_2(2-1)+c_3(3-1)+\cdots+c_t(t-1)}\\
&=&(-1)^{(1c_1+2c_2+3c_3+\cdots+tc_t)-(c_1+c_2+c_3+\cdots+c_t)}\\
&=&(-1)^{r}(-1)^{c_1+c_2+c_3+\cdots+c_t},\end{eqnarray*} and the length of the conjugate class containing $\tau$ is
$ \frac{r!}{1^{c_1} 2^{c_2} 3^{c_3}\cdots t^{c_t} c_1!c_2!c_3!\cdots c_t!}$.
Be identity (6), we have
\begin{eqnarray*}
&&\sum_{(g_1,g_2,\cdots,g_r)\in \overbrace{G\times G\cdots\times G}^r}\chi((g_1,\cdots,g_r)^\tau)x_{g_1} x_{g_2} \cdots x_{g_r}\\
&=&(-1)^r(-1)^{c_1+c_2+c_3+\cdots+c_t}S_1^{c_1} S_2^{c_2} \cdots S_t^{c_t}.
\end{eqnarray*}
Let $\tau,\tau'\in \mathfrak{S}_r$. Then \begin{eqnarray*}
&&\sum_{(g_1,g_2,\cdots,g_r)\in \overbrace{G\times G\cdots\times G}^r}\chi((g_1,\cdots,g_r)^\tau)x_{g_1} x_{g_2} \cdots x_{g_r}\\
&=&\sum_{(g_1,g_2,\cdots,g_r)\in \overbrace{G\times G\cdots\times G}^r}\chi((g_1,\cdots,g_r)^{\tau'})x_{g_1} x_{g_2} \cdots x_{g_r} \end{eqnarray*}if and only if $\tau$ and $\tau'$ are conjugate in $\mathfrak{S}_r$.

\begin{theorem}\label{phin} For $1\leq r\leq f$,
  $$ r!\Phi_r(x)=\sum_{g_1,g_2,\cdots g_r} \chi(g_1,g_2,\cdots g_r)x_{g_1}x_{g_2}\cdots x_{g_r}\leqno(8) $$
\end{theorem}

\begin{proof}
Let CS$_r$ be the set of representatives of conjugate classes of $\mathfrak{S}_r$. Set $l(\tau)=\frac{r!}{1^{c_1} 2^{c_2} 3^{c_3}\cdots t^{c_t} c_1!c_2!c_3!\cdots c_t!}$. Then
\begin{eqnarray*}
&&(-1)^r \sum_{(g_1,g_2,\cdots,g_r)\in \overbrace{G\times G\cdots\times G}^r} \chi(g_1,g_2,\cdots g_r)x_{g_1} x_{g_2} \cdots x_{g_r}\\
&=& (-1)^r \sum_{(g_1,g_2,\cdots,g_r)\in \overbrace{G\times G\cdots\times G}^r} \sum_{\tau\in \mathfrak{S}_n}\chi((g_1,\cdots,g_r)^\tau)x_{g_1} x_{g_2} \cdots x_{g_r}\\
&=&(-1)^r \sum_{\tau\in \mathfrak{S}_r}\sum_{(g_1,\cdots,g_r)\in \overbrace{G\times G\cdots\times G}^r}\chi((g_1,\cdots,g_r)^\tau)x_{g_1} x_{g_2} \cdots x_{g_r}\\
&=&(-1)^r  \sum_{\tau\in CS_r}l(\tau)\sum_{(g_1,\cdots,g_r)\in \overbrace{G\times G\cdots\times G}^r} \chi((g_1,g_2,\cdots,g_r)^\tau)x_{g_1} x_{g_2} \cdots x_{g_r}\\
&=&(-1)^r  \sum_{\tau\in \mathfrak{S}_r}l(\tau)(-1)^r(-1)^{a+b+c+\cdots}(-1)^r(-1)^{c_1+c_2+c_3+\cdots+c_t}S_1^{c_1} S_2^{c_2} \cdots S_t^{c_t}\\
&=&r!(-1)^r\Phi_r.
\end{eqnarray*}
\end{proof}

\section{Orthogonal relations and invariants of $\mathcal{P}$-characters}\label{sec:orthdegree}
In the rest, $\chi_t[l]$ will be denoted by $\chi_{lt}$.


Let $\mathfrak{p}_{ij}=\mbox{Tr}(\mathcal{A}_i\mathcal{A}_j)$ and let $\mathfrak{P}=(\mathfrak{p}_{ij})$ be a matrix. Let $\mathcal{R}$ be the matrix with $\mathfrak{r}_j(\mathfrak{a}_i)$ in the $(ij)$-entry. Then $\mathcal{R}\mathcal{R'}=\mathfrak{P}$ and $\mathcal{R}$ is invertible by Theorem \ref{thm:crsemi}.
 Let $\mathcal{S}=(\varsigma_{ij})$ be a matrix such that $\mathcal{RS'}=I$. Then $\mathcal{S'R}=I$ and it follows equations:
\begin{equation*}\label{equ:orthog} \sum_{t=1}^n \lambda_{it}\varsigma_{jt}=\delta_{ij}\mbox{ and }\sum_{i=1}^n\lambda_{ij}\varsigma_{ik}=\delta_{jk}.\leqno(4)\end{equation*}

\begin{theorem}\label{Othmatrix}
 Set $\varsigma_l=(\varsigma_{1l},\varsigma_{2l},\cdots,\varsigma_{nl})$ for $l=1,\cdots,n$.
Then
\begin{itemize}
\item[(i)]{$\varsigma_1,\cdots,\varsigma_n$ are eigenvectors of $\mathfrak{A'}$ belonging to  $\gamma_1(x),\cdots,\gamma_n(x)$ respectively i.e. $\varsigma_l\mathfrak{A'}=\gamma_l(x)\varsigma_l$ where $\mathfrak{A}=\sum_{i=1}^sx_i\mathcal{A}_i$;}
\item[(ii)]{$\varsigma_1,\cdots,\varsigma_n$  are determined by equations:
\begin{equation*}\label{equ:orthogonalM} x_{i}\gamma_l(x)=\sum_ja_{ij}(x)x_{j}, \leqno(5)\end{equation*} where $a_{ij}(x)$ is the element on $(ij)$-entry of $\mathfrak{A}$.}
\end{itemize}
\end{theorem}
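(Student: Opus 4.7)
The plan is to deduce both parts from Theorem \ref{Property:A}(ii), which exhibits each $\gamma_t$ as a \emph{left} eigenvector of $\mathfrak{A}$, together with the observation that $\mathcal{S}$ is by construction the inverse transpose of $\mathcal{R}$, so its columns are automatically the \emph{right} eigenvectors dual to the rows of $\mathcal{R}'$.

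First I would package the $n$ relations $\gamma_t \mathfrak{A}=\gamma_t(x)\gamma_t$ coming from Theorem \ref{Property:A}(ii) into one matrix identity. Since row $t$ of $\mathcal{R}'$ is precisely $\gamma_t$, this reads $\mathcal{R}'\mathfrak{A}=D\mathcal{R}'$, where $D=\mbox{Diag}(\gamma_1(x),\ldots,\gamma_n(x))$. The matrix $\mathcal{R}$ is invertible (by Theorem \ref{thm:crsemi}, since $\mathfrak{P}=\mathcal{R}\mathcal{R}'$ is invertible), so the defining equation $\mathcal{R}\mathcal{S}'=I$ forces $\mathcal{S}'=\mathcal{R}^{-1}$ and hence $\mathcal{S}=(\mathcal{R}')^{-1}$. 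Multiplying the eigenvalue identity on the left by $\mathcal{S}$ yields $\mathfrak{A}=\mathcal{S}D\mathcal{R}'$, equivalently $\mathfrak{A}\mathcal{S}=\mathcal{S}D$. Reading off the $l$-th column gives $\mathfrak{A}\,\varsigma_l'=\gamma_l(x)\,\varsigma_l'$, and transposing gives $\varsigma_l\mathfrak{A}'=\gamma_l(x)\varsigma_l$, which is part (i).

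For part (ii), the system $x_i\gamma_l(x)=\sum_j a_{ij}(x)x_j$ is nothing other than the right-eigenvector equation $\mathfrak{A}x=\gamma_l(x)x$ for the column vector $x=(x_1,\ldots,x_n)'$, and part (i) already exhibits $\varsigma_l'$ as a solution. To see that this system (together with the normalization $\gamma_l\varsigma_l'=1$ supplied by the diagonal entry $\delta_{ll}$ of equations (4)) determines $\varsigma_l$, I would invoke that the $n$ polynomials $\gamma_1(x),\ldots,\gamma_n(x)$ are pairwise distinct as elements of $k[x_1,\ldots,x_n]$, since they are the $n$ distinct linear-factor roots of the determinant identity (3); consequently each eigenspace of $\mathfrak{A}$ over the field of rational functions in $x_1,\ldots,x_n$ is one dimensional, which is also visible directly from the invertibility of $\mathcal{S}$ obtained above. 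Hence a nonzero solution of the displayed system is unique up to scalar, and the normalization pins down $\varsigma_l$.

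The main obstacle is really only bookkeeping: carefully tracking rows versus columns and the placement of primes. Once one recognises that the rows of $\mathcal{R}'$ are left eigenvectors with diagonal $D$ and that $\mathcal{S}=(\mathcal{R}')^{-1}$ is forced by the defining relation $\mathcal{R}\mathcal{S}'=I$, the simultaneous diagonalisation $\mathfrak{A}=\mathcal{S}D\mathcal{R}'$ is immediate and both conclusions fall out by reading off columns.
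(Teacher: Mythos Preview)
Your proof is correct and follows essentially the same approach as the paper: both use the left-eigenvector property $\gamma_t\mathfrak{A}=\gamma_t(x)\gamma_t$ from Theorem~\ref{Property:A} together with the inverse relation $\mathcal{R}\mathcal{S}'=I$. The paper carries out the computation componentwise---first deriving $a_{ijk}=\sum_t \varsigma_{it}\lambda_{jt}\lambda_{kt}$ from equations~(4), then verifying $\sum_j a_{ijk}\varsigma_{jl}=\varsigma_{il}\lambda_{kl}$ directly---whereas you package the same identities as the matrix diagonalisation $\mathcal{R}'\mathfrak{A}=D\mathcal{R}'$ and invert.
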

\begin{proof} Since $\lambda_{jt}\lambda_{kt}=\sum_{s} a_{sjk}\lambda_{st}$, by equations (4), the following equation hold:
\begin{equation*} a_{ijk}=\sum_t\varsigma_{it}\lambda_{jt}\lambda_{kt}.\end{equation*} Then $\sum_j a_{ijk}\varsigma_{jl}=\sum_j\varsigma_{jl}\sum_t\varsigma_{it}\lambda_{jt}\lambda_{kt}=\sum_t\varsigma_{it}\lambda_{kt}(\sum_j\lambda_{jt}\varsigma_{jl})=\varsigma_{il}\lambda_{kl}.$ So \begin{equation*}\label{equ:orthog2}\varsigma_{il}\gamma_l(x)=\sum_k\varsigma_{il}\lambda_{kl}x_k=\sum_k\sum_j a_{ijk}\varsigma_{jl}x_k=\sum_j(\sum_k a_{ijk}x_k)\varsigma_{jl}.\leqno(6)\end{equation*} So $\varsigma_l$ is solution of the equation (5) and it is eigenvalue vectors of $\mathfrak{A'}$ with respect to the eigenvalue $\gamma_l(x)$ for $1\leq l\leq n$ since  $\varsigma_l\gamma_l(x)=\varsigma_l \mathfrak{A'}$. Similar to the proof in \ref{Property:A}, we can show that $\varsigma_1,\cdots,\varsigma_n$  are determined by
$ x_{i}\gamma_l(x)=\sum_ja_{ij}(x)x_{j}$.

\end{proof}

By comparing the coefficient of $x_k$ on both side of equations (6), it follows :
\begin{equation*}\varsigma_{il}\frac{\chi_{kl}}{\chi_l(1)}=\varsigma_{il}\lambda_{kl} =\sum_j a_{ijk}\varsigma_{jl}=\sum_j \frac{\ell_{i'jk}}{\ell_i}\varsigma_{jl}. \mbox{ i.e. } \ell_i\varsigma_{il}{\chi_{kl}}=\chi_l(1)\sum_j a_{j'i'k}\ell_j\varsigma_{jl}.\end{equation*}
By multiplying $x_k$ on both sides of above equations and taking sum over $F$, it follows $$\ell_i\varsigma_{il}\sum_k\frac{\chi_{kl}}{\chi_l(1)}x_k=\sum_j \sum_k a_{j'i'k}x_k\ell_j\varsigma_{jl}.$$ This equation can be written into the follow version of matrices
$$(\ell_1\varsigma_{1l},\cdots,\ell_n\varsigma_{nl})\gamma_l(x)=(\ell_1\varsigma_{1l},\cdots,\ell_n\varsigma_{nl})(\sum_ka_{j'i'k}x_k)_{1\leq j,i\leq n}.$$ So $(\ell_1\varsigma_{1l},\cdots,\ell_n\varsigma_{nl})$ is the eigenvector of the matrix $(\sum_ka_{j'i'k}x_k)_{1\leq j,i\leq n}$ belonging to $\gamma_l(x)$.
Since $\chi_{i'l}{\chi_{kl}}=\chi_l(1)\sum_j a_{j'i'k}\chi_{j'l}$,($\chi_{1'l},\cdots,\chi_{n'l}$) is also the eigenvector of the matrix $(\sum_ka_{j'i'k}x_k)_{1\leq j,i\leq n}$ belonging to $\gamma_l(x)$. By similar proof in Theorem \ref{Othmatrix}, we can choose $\ell_i\varsigma_{il}=z_{l}\chi_{i'l}$ for some   $z_{l}$ and $z_{l}$ can be taken to be the form $z_l=\frac{e_l}{\ell}$. Then  $\mathcal{S}=(\frac{e_l}{\ell\ell_i}\chi_{i'l})$.

\begin{theorem}\label{thm:orth}(Orthogonal relation)

\begin{itemize}
\item[(i).]{$\sum_{i} \frac{\chi_{ij}\chi_{i'l}}{\ell_i}=0$,}
\item[(ii).]{$\sum_{i} \frac{\chi_{ij}\chi_{i'j}}{\ell_i}=\frac{\ell \chi_j(1)}{e_j}$,}
\item[(iii).]{$\sum_l{\frac{e_l}{\chi_l(1)}}\chi_{il}\chi_{jl}=0(i\neq j')$,}
\item[(iv).]{$\sum_l{\frac{e_l}{\chi_l(1)}}\chi_{il}\chi_{i'l}=\ell\ell_i$,}
\item[(v).]{$\sum_l{e_l}\chi_{il}=0$,}
\item[(vi).]{$\sum_le_l\chi_l(1)=\ell$.}

\end{itemize}
\end{theorem}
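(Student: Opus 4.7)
The plan is to obtain all six orthogonality relations directly from the matrix identities $\mathcal{R}\mathcal{S}'=I$ and $\mathcal{S}'\mathcal{R}=I$, once I substitute the closed-form expressions derived immediately before the theorem, namely $\lambda_{ij}=\chi_{ij}/\chi_j(1)$ (the $(i,j)$-entry of $\mathcal{R}$) and $\varsigma_{il}=\frac{e_l}{\ell\ell_i}\chi_{i'l}$.

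First I will expand $\sum_l \lambda_{il}\varsigma_{jl}=\delta_{ij}$; after substituting and pulling the factor $\frac{1}{\ell\ell_j}$ out of the sum, this reads
\[
\sum_l \frac{e_l}{\chi_l(1)}\chi_{il}\chi_{j'l} \;=\; \ell\ell_j\,\delta_{ij}.
\]
Setting $k=j'$ (so that $j=k'$, and $\ell_j=\ell_{k'}=\ell_k$ by Lemma \ref{basicEqs}(i)), the off-diagonal case $i\ne k'$ yields (iii) and the diagonal case $i=k'$ yields (iv).

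Next I will expand $\sum_i \varsigma_{ij}\lambda_{ik}=\delta_{jk}$; it becomes
\[
\sum_i \frac{\chi_{i'j}\chi_{ik}}{\ell_i} \;=\; \frac{\ell\chi_k(1)}{e_j}\,\delta_{jk}.
\]
Since $i\mapsto i'$ is a bijection on the index set and $\ell_{i'}=\ell_i$, I can reindex $i\to i'$ to move the prime from the first factor to the second; the case $j\ne k$ then gives (i) (with $k$ renamed $l$), and $j=k$ gives (ii). Finally, (v) and (vi) are the specializations of (iii) and (iv) at the identity class: taking $j=1$ in (iii) (with $1'=1$, $g_1=E$, so $\chi_{1l}=\chi_l(1)$) cancels the $\chi_l(1)$ denominators and leaves $\sum_l e_l\chi_{il}=0$ for $i\ne 1$, and taking $i=1$ in (iv) (with $\ell_1=1$) gives $\sum_l e_l\chi_l(1)=\ell$.

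There is no real obstacle to this plan: the whole argument is bookkeeping with the involution $i\mapsto i'$ and the identity $\ell_{i'}=\ell_i$ already established in Lemma \ref{basicEqs}. The one subtlety worth flagging is that one must know the explicit normalization $z_l=e_l/\ell$ for the inverse matrix $\mathcal{S}$, which the authors fix in the paragraph preceding the theorem; without that normalization one only obtains (i)--(iv) up to a column-dependent scalar and cannot pin down the right-hand constants $\ell\ell_i$ and $\ell\chi_j(1)/e_j$.
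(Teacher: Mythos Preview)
Your proposal is correct and follows essentially the same route as the paper: the paper's proof simply states that (i)--(ii) follow from $\mathcal{S}'\mathcal{R}=I$, that (iii)--(iv) follow from $\mathcal{R}\mathcal{S}'=I$, and that (v)--(vi) come from specializing (iii)--(iv) at $i=1$. Your write-up is an explicit unpacking of exactly these matrix identities, with the reindexing $i\mapsto i'$ and the normalization $\varsigma_{il}=\frac{e_l}{\ell\ell_i}\chi_{i'l}$ spelled out.
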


\begin{proof} Equations (i),(ii) follows from $\mathcal{S'R}=I$,equations (iii),(iv) follows from $\mathcal{RS'}=I$. Equations (v) and (vi) follows by taking $i=1$ in (iii) and (iv) respectively. \end{proof}

\begin{theorem}\label{thm:multox}

For $1\leq i_1,i_2,\cdots ,i_r\leq n$, the following equation holds true

$$ \ell\ell_{i_1i_2\cdots i_r}=\sum_l\frac{e_l}{\chi_l^{r-1}(1)}\chi_{i_1l}\cdots \chi_{i_rl}.$$

\end{theorem}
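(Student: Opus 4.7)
The plan is to prove the formula by induction on $r$. The base case $r=2$ is essentially the orthogonality relation already established: parts (iii) and (iv) of Theorem~\ref{thm:orth} say that $\sum_l \frac{e_l}{\chi_l(1)}\chi_{il}\chi_{jl}$ equals $0$ when $j\neq i'$ and equals $\ell\ell_i$ when $j=i'$. Since $\ell_{ij}=\ell_i$ when $j=i'$ and $0$ otherwise, this is precisely $\ell\,\ell_{ij}$. (The case $r=1$ could serve equally well, using parts (v) and (vi) of Theorem~\ref{thm:orth} together with the standing assumption $\mathcal{P}_1=\{E\}$.)

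For the inductive step, assume the identity for $r-1$ indices. The key observation is to collapse the last two character values using the definition equation from Theorem~\ref{thm:defeqn}, written in the form
\[\chi_{i_{r-1}l}\,\chi_{i_rl} \;=\; \chi_l(1)\sum_{t=1}^{n} a_{t\,i_{r-1}\,i_r}\,\chi_{tl}.\]
Substituting this into the target expression and interchanging the order of summation gives
\[\sum_l \frac{e_l}{\chi_l^{r-1}(1)}\chi_{i_1l}\cdots\chi_{i_rl}
\;=\; \sum_{t} a_{t\,i_{r-1}\,i_r}\sum_l \frac{e_l}{\chi_l^{r-2}(1)}\chi_{i_1l}\cdots\chi_{i_{r-2}l}\,\chi_{tl},\]
and by the induction hypothesis the inner sum equals $\ell\,\ell_{i_1\cdots i_{r-2}\,t}$.

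It remains to identify the resulting expression with $\ell\,\ell_{i_1\cdots i_r}$. Replacing $a_{t\,i_{r-1}\,i_r}$ by $\ell_{t'\,i_{r-1}\,i_r}/\ell_{t'}$ via Lemma~\ref{basicEqs}(iii) and using $\ell_t=\ell_{t'}$ turns the right-hand side into
\[\ell\sum_{t}\frac{1}{\ell_t}\,\ell_{i_1\cdots i_{r-2}\,t}\,\ell_{t'\,i_{r-1}\,i_r},\]
which is $\ell\,\ell_{i_1\cdots i_r}$ by Lemma~\ref{basicEqs}(v) applied with the split $a=r-2$. This closes the induction. The only nonroutine point is the alignment of the two recursions: collapsing the last two indices on the character side dictates the split $a=r-2$ on the combinatorial side, and the match relies on the symmetry $\ell_t=\ell_{t'}$.
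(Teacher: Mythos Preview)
Your proof is correct and follows essentially the same route as the paper: induction on $r$, collapsing the last two character values via the definition equation $\chi_{i_{r-1}l}\chi_{i_rl}=\chi_l(1)\sum_t a_{t\,i_{r-1}\,i_r}\chi_{tl}$, applying the induction hypothesis, and then matching with Lemma~\ref{basicEqs}(v). The only cosmetic difference is that the paper takes $r=3$ as its base case (deriving it from the orthogonality relations) whereas you take $r=2$; your choice is slightly cleaner since $r=2$ \emph{is} the orthogonality relation (iii)--(iv) of Theorem~\ref{thm:orth} directly, and the inductive step then handles $r=3$ uniformly along with all higher $r$.
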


\begin{proof}

Multiplying by $\frac{e_t\chi_{i_3t}}{\chi^2_{1t}}$ on both sides  $\chi_{i_1t}\chi_{i_2t}=\chi_{1t}\sum_{l=1}^{s}a_{li_1i_2}\chi_{lt}$ and summing over $t$, it follows
\begin{align*}
    \sum_t\frac{e_t}{\chi_{1t}^{2}}\chi_{i_1t} \chi_{i_2t}\chi_{i_3t} &= \sum_t\frac{e_t}{\chi_{1t}}\sum_l a_{li_1i_2}\chi_{lt}\chi_{i_3t}\\
    & = \sum_l a_{li_1i_2}\sum_t\frac{e_t}{\chi_{1t}}\chi_{lt}\chi_{i_3t}\\
    \mbox{By Theorem \ref{thm:orth}(iii)(iv)} &= \sum_l\ell_{l'i_1i_2}\frac{\ell\ell_{li_3}}{\ell_l} \\
    \mbox{By Lemma \ref{basicEqs}(ii)}&=\ell\ell_{i_1i_2i_3}
  \end{align*}

Next using induction on $n$. If
$$\ell \ell_{i_1i_2\cdots i_n}=\sum_l\frac{e_l}{\chi_l^{n-1}(1)}\chi_{i_1l}\cdots \chi_{i_nl},$$
then
\begin{align*}
  \sum_l\frac{e_l}{\chi_l^{n}(1)}\chi_{i_1l}\cdots \chi_{i_nl}\chi_{i_{n+1}l} &= \sum_l\frac{e_l}{\chi_l^{n}(1)}\chi_{i_1l}\cdots \chi_{i_{n-1}l}\sum_s\frac{\chi_{1l}\ell_{s'i_ni_{n+1}}}{\ell_s}\chi_{sl} \\
  &=\sum_s\frac{\ell_{s'i_ni_{n+1}}}{\ell_s}\sum_l\frac{e_l}{\chi_l^{n-1}(1)}\chi_{i_1l}\cdots \chi_{i_{n-1}l}\chi_{sl} \\
  &=\sum_s\frac{\ell_{s'i_ni_{n+1}}}{\ell_s}\ell \ell_{i_1i_2\cdots i_{n-1}s} \\
   \mbox{Lemma \ref{basicEqs}(ii)}&= \ell \ell_{i_1i_2\cdots i_{n+1}}
\end{align*}
\end{proof}

\begin{lemma}\label{lem:pij}
 For $1\leq i,j\leq n$,
$p_{ij}=\sum_{t=1}^{n}\frac{\ell_{ijtt'}}{\ell_t}$. In particular, $p_i=p_{i'}=p_{i1}=\sum_l \frac{\chi_{il}}{\chi_l(1)}$.
\end{lemma}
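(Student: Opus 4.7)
The first equality $p_{ij}=\sum_{t=1}^{n}\ell_{ijtt'}/\ell_t$ has already been established in Lemma~\ref{property:p}, so it remains only to verify the special case and the new expression $p_i=\sum_l \chi_{il}/\chi_l(1)$. The plan is to read $p_{i1}$ as an ordinary trace and then invoke the diagonalization of the regular representation of $k\mathcal{P}$ that was used throughout Section~\ref{sec:realization}.

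First I would recall the standing assumption that $\mathscr{L}(\hat{C}_1)=\mathcal{M}_1$ is the identity matrix (it represents the identity element of the partition algebra). Consequently
\[
p_{i1}=\operatorname{Tr}(\mathcal{M}_i\mathcal{M}_1)=\operatorname{Tr}(\mathcal{M}_i),
\]
which by the definition $p_i=p_{i1}=p_{1i}$ in Lemma~\ref{property:p} gives $p_i=\operatorname{Tr}(\mathcal{M}_i)$.

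Next I would use that $k\mathcal{P}$ is semisimple by Theorem~\ref{thm:semisimPart}, so there is an invertible matrix $\mathcal{U}$ simultaneously diagonalizing the regular representation: $\mathcal{U}\mathcal{M}_i\mathcal{U}^{-1}=\operatorname{diag}(\lambda_{i1},\dots,\lambda_{in})$ for every $i$, with $\lambda_{it}=\chi_{it}/\chi_t(1)$ as shown at the beginning of Section~\ref{sec:realization}. Since the trace is invariant under conjugation,
\[
p_i=\operatorname{Tr}(\mathcal{M}_i)=\sum_{t=1}^n\lambda_{it}=\sum_{t=1}^n\frac{\chi_{it}}{\chi_t(1)}=\sum_{l=1}^n\frac{\chi_{il}}{\chi_l(1)},
\]
which is the desired formula.

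Finally, the identity $p_i=p_{i'}$ is a symmetry check on the $\ell$-counts: using Lemma~\ref{basicEqs}(i)–(ii),
\[
\ell_{i'tt'}=\ell_{(i')'(t)'(t')'}=\ell_{it't}=\ell_{itt'},
\]
where the first equality inverts all arguments and the last permutes the two final indices. Summing over $t$ and dividing by $\ell_t$ yields $p_{i'}=p_i$. The only potential subtlety is making sure the diagonalization argument applies with the multiplicities I claim — in the regular representation of a commutative semisimple algebra each one-dimensional irreducible appears exactly once, so no multiplicity factor intervenes in the trace, and the computation goes through cleanly. No serious obstacle is expected.
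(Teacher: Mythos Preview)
Your proof is correct, and it takes a genuinely more direct route than the paper's own argument. The paper does not compute $p_{i1}$ as a bare trace; instead it first establishes the general identity
\[
p_{ij}=\sum_{t}\frac{\ell_{ijtt'}}{\ell_t}=\frac{1}{\ell}\sum_{t}\frac{1}{\ell_t}\sum_{l}\frac{e_l}{\chi_l(1)^3}\chi_{il}\chi_{jl}\chi_{tl}\chi_{t'l}=\sum_{l}\frac{\chi_{il}\chi_{jl}}{\chi_l(1)^2},
\]
by invoking Theorem~\ref{thm:multox} on $\ell\ell_{ijtt'}$ and then collapsing the $t$-sum via orthogonality~(iv) of Theorem~\ref{thm:orth}; only afterwards does it set $j=1$ to obtain $p_i=\sum_l\chi_{il}/\chi_l(1)$. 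Your argument bypasses Theorem~\ref{thm:multox} and the orthogonality relations entirely: since $\mathcal{M}_1=I$, $p_{i1}=\operatorname{Tr}(\mathcal{M}_i)$ is simply the sum of the diagonal entries $\lambda_{it}=\chi_{it}/\chi_t(1)$ in the simultaneous diagonalization. This is cleaner for the stated lemma. What the paper's detour buys is the intermediate formula $p_{ij}=\sum_l\chi_{il}\chi_{jl}/\chi_l(1)^2$ for all $i,j$, though note that your trace approach would yield this just as easily via $\operatorname{Tr}(\mathcal{M}_i\mathcal{M}_j)=\sum_t\lambda_{it}\lambda_{jt}$. Your justification of $p_i=p_{i'}$ from the $\ell$-symmetries is also fine (the paper records this fact already in Lemma~\ref{property:p} without spelling it out).
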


\begin{proof} The first equation follows from Lemma \ref{property:p}.
By Theorem \ref{thm:multox} and (iv) of \ref{thm:orth}, it follows
 \[p_{ij}=\sum_{t=1}^{n}\frac{\ell_{ijtt'}}{\ell_t}=\sum_{t=1}^{n}\frac{1}{\ell_t\ell}\ell\ell_{ijtt'}=\sum_{t=1}^{n}\frac{1}{\ell_t\ell}\sum_l\frac{e_l}{\chi_l^3(1)}\chi_{il}\chi_{jl}\chi_{tl}\chi_{t'l} =\sum_l\frac{\chi_{il}\chi_{jl}}{\chi_l^2(1)}.\] In particular, by taking $j=1$ in above equation, we have $p_i=p_{i1}=p_{i1'}=p_{i'1}=p_{i'}=\sum_l \frac{\chi_{il}}{\chi_l(1)}$.

\end{proof}

\begin{theorem}\label{thm:degree} The following equations hold for $1\leq t\leq n$,
$\sum_{\gamma=1}^n \frac{1}{\ell}\lambda_{\gamma t}\frac{p_\gamma}{\ell_\gamma}=\frac{1}{\chi_t(1)e_t}$
\end{theorem}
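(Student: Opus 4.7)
The plan is to expand the left-hand side by directly substituting the two identities already available: $\lambda_{\gamma t}=\chi_{\gamma t}/\chi_t(1)$, which comes from the definition of $\gamma_t$ and the identification $\chi_t^{\mathcal P}(g_\gamma)=\chi_t(1)\lambda_{\gamma t}$ established earlier in Section \ref{sec:realization}, and $p_\gamma=\sum_l \chi_{\gamma l}/\chi_l(1)$ from Lemma \ref{lem:pij}. After substitution the left side becomes a double sum indexed by $\gamma$ and $l$, which I will then reorganise so that the $\gamma$-sum plays the role of an inner product against which the orthogonality relations from Theorem \ref{thm:orth} can be applied.

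Concretely, the first step is to write
\[
\sum_{\gamma=1}^n \frac{1}{\ell}\lambda_{\gamma t}\frac{p_\gamma}{\ell_\gamma}
 \;=\; \frac{1}{\ell\,\chi_t(1)}\sum_{l=1}^n \frac{1}{\chi_l(1)}\sum_{\gamma=1}^n \frac{\chi_{\gamma t}\,p_\gamma\,\text{(with $p_\gamma$ replaced appropriately)}}{\ell_\gamma}.
\]
To make the orthogonality of Theorem \ref{thm:orth}(i),(ii) applicable I need a $\chi_{\gamma' l}$ rather than a $\chi_{\gamma l}$ inside the inner sum; this is exactly the reason for recording $p_\gamma=p_{\gamma'}=\sum_l \chi_{\gamma' l}/\chi_l(1)$ in Lemma \ref{lem:pij}. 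Using this form of $p_\gamma$ and swapping the order of summation produces the expression
\[
\frac{1}{\ell\,\chi_t(1)}\sum_{l=1}^n \frac{1}{\chi_l(1)}\sum_{\gamma=1}^n \frac{\chi_{\gamma t}\,\chi_{\gamma' l}}{\ell_\gamma}.
\]

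The second step is to invoke the orthogonality relations. Since $\ell_\gamma=\ell_{\gamma'}$, reindexing the $\gamma$-sum shows that the inner sum $\sum_{\gamma}\chi_{\gamma t}\chi_{\gamma' l}/\ell_\gamma$ is precisely the left-hand side of Theorem \ref{thm:orth}(i) when $l\ne t$ and of (ii) when $l=t$. Hence the inner sum vanishes except in the single term $l=t$, where it equals $\ell\,\chi_t(1)/e_t$. The remaining computation reduces to
\[
\frac{1}{\ell\,\chi_t(1)}\cdot\frac{1}{\chi_t(1)}\cdot\frac{\ell\,\chi_t(1)}{e_t}\;=\;\frac{1}{\chi_t(1)\,e_t},
\]
which is the claimed identity.

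I do not foresee a genuine obstacle: every ingredient (the realisation of the $\lambda_{\gamma t}$ as normalised character values, the two formulas for $p_\gamma$, and the column orthogonality of the matrix $\mathcal R$) has already been proved in the excerpt. The only point that needs care is choosing the form $p_\gamma=\sum_l \chi_{\gamma' l}/\chi_l(1)$ rather than $p_\gamma=\sum_l \chi_{\gamma l}/\chi_l(1)$, so that after the two substitutions the surviving $\gamma$-sum matches the statement of Theorem \ref{thm:orth}(i)--(ii) rather than an orthogonality statement that the theorem does not supply directly.
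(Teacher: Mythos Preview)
Your proposal is correct and follows essentially the same route as the paper's own proof: substitute $\lambda_{\gamma t}=\chi_{\gamma t}/\chi_t(1)$ and $p_\gamma=p_{\gamma'}=\sum_l \chi_{\gamma' l}/\chi_l(1)$ from Lemma~\ref{lem:pij}, interchange the order of summation, and then apply the orthogonality relations Theorem~\ref{thm:orth}(i),(ii) to collapse the inner $\gamma$-sum to the single term $l=t$. Your care in choosing the $p_{\gamma'}$ form of $p_\gamma$ is exactly the point the paper uses (implicitly) as well.
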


\begin{proof}

It follows from Lemma \ref{lem:pij} that

 \begin{align*}
 &\sum_{\gamma=1}^n \frac{1}{\ell}\lambda_{\gamma t}\frac{p_\gamma}{\ell_\gamma}
 = \sum_{\gamma=1}^n \frac{1}{\ell}\frac{\chi_{\gamma t}}{\chi_t(1)}\frac{p_\gamma}{\ell_\gamma}\\
 &= \sum_{\gamma=1}^n \frac{1}{\ell\ell_\gamma}\frac{\chi_{\gamma t}}{\chi_t(1)}\sum_j\frac{\chi_{\gamma'j}}{\chi_j(1)}
 = \sum_{j=1}^n \frac{1}{\ell}\frac{\sum_\gamma\chi_{\gamma t}\chi_{\gamma'j}}{\chi_t(1)\chi_j(1)}\\
\mbox{Theorem\ref{thm:orth}(i)(ii)} &= \frac{1}{e_t\chi_t(1)}.\end{align*}
\end{proof}

Let $M$ be the permutation matrix such that $(p_{ij'})M=(p_{ij})$. Then $M^2=I$.

\begin{theorem}\label{Mainthm}
The polynomial $D_\mathcal{P}(x)=|xI-Diag(\ell\ell_1,\cdots,\ell\ell_n)M\mathcal{SS}'|$ has the following factorization:
$D_\mathcal{P}(x)=(x-\chi_1(1)e_1)\cdots(x-\chi_n(1)e_n)$.

\end{theorem}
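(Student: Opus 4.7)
The plan is to prove that $\mathcal{R}$ diagonalizes $A:=\mathrm{Diag}(\ell\ell_1,\cdots,\ell\ell_n)M\mathcal{SS}'$ with diagonal entries $e_t\chi_t(1)$. Since $\mathcal{R}$ is invertible by Theorem \ref{thm:crsemi}, the characteristic polynomial of $A$ is then $\prod_t(x-e_t\chi_t(1))$. Concretely, I will show that $A\mathcal{R}=\mathcal{R}E$ where $E=\mathrm{diag}(e_1\chi_1(1),\ldots,e_n\chi_n(1))$, which amounts to checking, entry-by-entry, that $(A\mathcal{R})_{it}=e_t\chi_{it}$.

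The ingredients already in hand are: the explicit form $\varsigma_{jt}=\tfrac{e_t}{\ell\ell_j}\chi_{j't}$ (displayed just before Theorem \ref{thm:orth}); the identity $\lambda_{it}=\chi_{it}/\chi_t(1)$ (shown early in section \ref{sec:realization}); the orthogonality relations (i)--(ii) of Theorem \ref{thm:orth}, which together say $\sum_j\tfrac{\chi_{jt}\chi_{j's}}{\ell_j}=\delta_{st}\tfrac{\ell\chi_t(1)}{e_t}$; and the fact that the permutation matrix $M$ defined by $(p_{ij'})M=(p_{ij})$ has entries $M_{ik}=\delta_{i,k'}$.

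First I would compute $(M\mathcal{SS}')_{ij}=(\mathcal{SS}')_{i'j}=\sum_t\varsigma_{i't}\varsigma_{jt}$, substitute the formula for $\varsigma$, and use $\ell_{i'}=\ell_i$ to get
\[(LM\mathcal{SS}')_{ij}=\ell\ell_i\cdot\frac{1}{\ell^2\ell_i\ell_j}\sum_t e_t^2\chi_{it}\chi_{j't}=\frac{1}{\ell\ell_j}\sum_t e_t^2\chi_{it}\chi_{j't}.\]
Then I would multiply by $\mathcal{R}_{jt}=\lambda_{jt}=\chi_{jt}/\chi_t(1)$ and sum over $j$:
\[(A\mathcal{R})_{it}=\sum_s e_s^2\chi_{is}\cdot\frac{1}{\chi_t(1)\ell}\sum_j\frac{\chi_{j's}\chi_{jt}}{\ell_j}.\]
The inner sum equals $\delta_{st}\tfrac{\ell\chi_t(1)}{e_t}$ by the orthogonality recalled above, and collapsing the $s$-sum yields $(A\mathcal{R})_{it}=e_t\chi_{it}=e_t\chi_t(1)\lambda_{it}=(\mathcal{R}E)_{it}$, as desired.

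The main obstacle is purely bookkeeping rather than anything conceptual: one has to track carefully the involution $j\mapsto j'$ as it passes through $M$, keep the normalizations $\ell_i$, $\ell\ell_i$, $\chi_t(1)$ and $e_t$ straight, and package the two halves of Theorem \ref{thm:orth} into a single Kronecker $\delta$ formula. Once those are aligned, the orthogonality relation does the decisive work of turning the double sum into a single diagonal entry, and the similarity $A=\mathcal{R}E\mathcal{R}^{-1}$ yields $D_\mathcal{P}(x)=\det(xI-E)=\prod_{t=1}^n(x-\chi_t(1)e_t)$.
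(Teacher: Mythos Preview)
Your proof is correct. The computation goes through exactly as you outline: the explicit form of $\varsigma_{jt}$ combined with the column orthogonality relations of Theorem~\ref{thm:orth} collapses $(A\mathcal{R})_{it}$ to $e_t\chi_{it}$, so $A=\mathcal{R}E\mathcal{R}^{-1}$ and the characteristic polynomial follows.

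Your route, however, is genuinely different from the paper's. The paper does not diagonalize $A$ directly. Instead it observes that $\mathcal{R'R}=(p_{ij'})M$ and $\mathcal{RS}'=I$ force $A^{-1}$ to be similar to $(p_{ij'})\mathrm{Diag}(\tfrac{1}{\ell\ell_1},\ldots,\tfrac{1}{\ell\ell_n})$; it then rewrites this last matrix as $\tfrac{1}{\ell}\sum_\gamma \mathcal{A}'_\gamma\tfrac{p_\gamma}{\ell_\gamma}$, i.e.\ as the specialization of the generic structure-constant matrix $\mathfrak{A}'$ at $x_\gamma=p_\gamma/\ell_\gamma$, and reads off the eigenvalues from Theorem~\ref{Property:A} together with the identity $\sum_\gamma\tfrac{1}{\ell}\lambda_{\gamma t}\tfrac{p_\gamma}{\ell_\gamma}=\tfrac{1}{\chi_t(1)e_t}$ of Theorem~\ref{thm:degree}. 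Your argument is shorter and more self-contained (it bypasses Theorem~\ref{thm:degree} entirely); the paper's argument, on the other hand, exhibits $A^{-1}$ explicitly as a value of $\mathfrak{A}$, which is what makes the Casimir-element interpretation in the subsequent remark and in Section~\ref{sec:FrobPoly} transparent.
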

\begin{proof}
Since $\mathcal{R'R}\mbox{Diag}(\frac{1}{\ell\ell_1},\cdots,\frac{1}{\ell\ell_n})\mbox{Diag}(\ell\ell_1,\cdots,\ell\ell_n)\mathcal{SS'}=1$ and $\mathcal{R'R}=(p_{ij'})M$, it suffices to show the characteristic polynomial of the matrix of the form: $(p_{ij'})\mbox{Diag}(\frac{1}{\ell\ell_1},\cdots,\frac{1}{\ell\ell_n})$ has the following factorization \[\mid xI-(p_{ij'})\mbox{Diag}(\frac{1}{\ell\ell_1},\cdots,\frac{1}{\ell\ell_n})\mid =(x-\frac{1}{\chi_1(1)e_1})\cdots(x-\frac{1}{\chi_n(1)e_n}).\]


By equation (\ref{equ:pij}) in section \ref{sec:Frobenius theory} and Lemma \ref{lem:pij}, we have \[\frac{p_{ij'}}{\ell\ell_{j'}}=\sum_{\gamma}\frac{\ell_{ij'\gamma}}{\ell\ell_j}\frac{p_\gamma}{\ell_\gamma}=\frac{1}{\ell}\sum_\gamma a_{ji\gamma}\frac{p_\gamma}{\ell_\gamma}.\] So $(\frac{p_{ij'}}{\ell\ell_{j'}})=\frac{1}{\ell}\sum_\gamma \mathcal{A'}_\gamma\frac{p_\gamma}{\ell_\gamma}$. By taking $x_\gamma=\frac{p_\gamma}{\ell_\gamma}$ in the matrix $\mathfrak{A}$, it follows from  (i) of Theorem \ref{Property:A} and \ref{thm:degree} that\begin{eqnarray*}
 && \mid xI-(\frac{p_{ij'}}{\ell\ell_{j'}})\mid
 = \prod_t(x-\sum_{\gamma=1}^n \frac{1}{\ell}\lambda_{\gamma t}\frac{p_\gamma}{\ell_\gamma})\\
 &=& \prod_t(x- \frac{1}{\chi_t(1)e_t}).\end{eqnarray*}


\end{proof}
\begin{remark}
In the next section we will show that $\prod_t(x- \frac{1}{\chi_t(1)e_t})$ is the characteristic polynomial of the Casimir element of $k\mathcal{P}$.
\end{remark}

Let $\Theta_G(x)=|M_G(x)|=\prod_i\Phi_i^{c_i}(x)$ be a factorization of irreducible polynomials over complex field $\mathbb{C}$ with $f_i$ as the degree of $\Phi_i(x)$.
Then each $\Phi_i(x)$ defines a character $\chi_i^\mathcal{P}$ and $(\frac{\chi_i^\mathcal{P}(g_1)}{\chi_i^\mathcal{P}(1)},\cdots,\frac{\chi_i^\mathcal{P}(g_n)}{\chi_i^\mathcal{P}(1)})$ is a solution of equations (1). So there is a unique $\gamma_t$ such that $\gamma_t=(\frac{\chi_i^\mathcal{P}(g_1)}{\chi_i^\mathcal{P}(1)},\cdots,\frac{\chi_i^\mathcal{P}(g_n)}{\chi_i^\mathcal{P}(1)})$. In this case, we call $\Phi_i(x)$ a $\gamma_t$-factor of $\Theta_G(x)$. Since $\gamma_1,\cdots,\gamma_n$ is linearly independent, each $\Phi_i(x)$ belongs to only one $\gamma_t$ for some $1\leq t\leq n$ and all $\gamma_t$-factors consist of the set of all irreducible factors of $\Theta_G(x)$.
If $\Phi_i(x)$ is $\gamma_t$-factor, then $\Phi_i(x)=\gamma_t^{f_i}(x)$ by assuming that $x_A=x_B$ whenever $A$ and $B$ belong to the same partition class of $\mathcal{P}$. Furthermore, we have  $$\Theta_G(x)=|M_G(x)|=\prod_i\Phi_i^{c_i}(x)=\prod_{i=1}^n\gamma_i^{m_i}(x)$$ with the assumption that $x_A=x_B$ whenever $A$ and $B$ belong to the same partition class of $\mathcal{P}$.

\begin{proposition}\label{prop:corthogonal}
  $$\sum_i m_i=\ell, \sum_i m_i\frac{\chi^\mathcal{P}_i(g_j)}{\chi^\mathcal{P}_i(1)}=0$$ for $R\not=E$.
\end{proposition}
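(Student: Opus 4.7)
The plan is to compare both sides of the factorization
\[
\Theta_G(x)=\prod_{i=1}^{n}\gamma_i^{m_i}(x),
\]
which holds under the standing assumption of this section that $x_A=x_B$ whenever $A,B$ lie in the same class of $\mathcal{P}$. Differentiating with respect to the class variable $x_{g_j}$ and evaluating at the \emph{identity vector} $\varepsilon=(1,0,\ldots,0)$ should yield both requested identities simultaneously, the case $j=1$ producing $\sum_i m_i=\ell$ and the cases $j\neq 1$ producing the orthogonality relation.

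For the right-hand side, recall from earlier in this section that $\chi_t(1)=\chi_{1t}$, so $\lambda_{1t}=1$ for every $t$. Thus each $\gamma_t(x)=x_{g_1}+\sum_{j\geq 2}\lambda_{jt}x_{g_j}$ satisfies $\gamma_t(\varepsilon)=1$, giving
\[
\left.\frac{\partial}{\partial x_{g_j}}\prod_{i}\gamma_i^{m_i}(x)\right|_{\varepsilon}=\sum_{i}m_i\lambda_{ji}=\sum_{i}m_i\frac{\chi_i^{\mathcal{P}}(g_j)}{\chi_i^{\mathcal{P}}(1)}.
\]

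For the left-hand side, the substitution amalgamates all variables $x_g$ with $g\in\mathcal{P}_j$ into the single variable $x_{g_j}$, so the chain rule gives $\partial_{x_{g_j}}\Theta_G=\sum_{g\in\mathcal{P}_j}\partial_{x_g}\Theta_G$. By Corollary \ref{cor:derivationofgrpdet}, each summand equals $h\,\Theta_{P,Q}(x)$ for any pair $P,Q$ with $PQ^{-1}=g$. At $x=\varepsilon$ the matrix $M_G$ reduces to the identity, whence $\Theta_{P,Q}(\varepsilon)=\delta_{PQ}$. When $j=1$, $\mathcal{P}_1=\{E\}$ forces $P=Q$ and the derivative evaluates to $h=\ell$; when $j\neq 1$, every pair has $g\neq E$, hence $P\neq Q$, and the derivative vanishes.

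Comparing the two sides term by term produces $\sum_i m_i=\ell$ from $j=1$ and $\sum_i m_i\chi_i^{\mathcal{P}}(g_j)/\chi_i^{\mathcal{P}}(1)=0$ from each $j\neq 1$. The only delicate point is ensuring that the differentiation is carried out \emph{after} the class substitution, so that the chain-rule identity above is applied correctly; once this is observed, the computation is entirely routine and I anticipate no genuine obstacle.
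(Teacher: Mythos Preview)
Your argument is correct. Both identities follow from your differentiation at the identity point $\varepsilon$, and your use of Corollary~\ref{cor:derivationofgrpdet} together with the observation that $M_G(\varepsilon)=I$ (so off-diagonal cofactors vanish) is clean and rigorous. The standing assumption $\mathcal{P}_1=\{E\}$, made at the start of Section~\ref{sec:defequ}, justifies your treatment of the $j=1$ case.

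The paper takes a closely related but slightly different route: it replaces $x_E$ by $x_E+u$, obtaining $\Theta_G(x_E+u,\ldots)=\prod_i(u+\gamma_i(x))^{m_i}$, and then reads off the identities by comparing coefficients of $u^{\ell-1}$ (the first identity also drops out immediately from the total degree $\sum_i m_i=\deg\Theta_G=\ell$). Both arguments exploit the same underlying fact---the linear part of $\Theta_G$ near the identity matrix is the trace---but you package it via differentiation and the cofactor formula, while the paper packages it via an auxiliary variable and coefficient extraction. Your version has the minor advantage of deriving both identities uniformly from a single evaluation, without needing a separate degree count; the paper's version avoids invoking the cofactor lemma. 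Neither approach is materially harder than the other.
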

\begin{proof}
  Replace $x_E$ by $x_E+u$ on both side of the equation $\Theta_G(x)=\prod_i(\Phi_i^{ e_i})$. Then
  $$\Theta_G(x)=
    \left|\begin{array}{ccccc}
      x_E+u & x_A & x_B & \cdots & \cdots\\
       & x_E+u & x_A & \cdots & \cdots\\
       & & x_E+u & & \\
       & \ast & & \ddots & \vdots\\
       & & & & x_E+u\\
    \end{array}\right|$$$=\prod_i(\Phi_i^{ e_i}(x_E+u,x_{g_2},\cdots,x_{g_m}))$
  .

 Assume that $x_A=x_B$ whenever $A$ and $B$ belong to the same partition class of $\mathcal{P}$ in above equation. By  Theorem \ref{thm:abbaExp}, it follows
 $$\Theta_G(x)=
    \left|\begin{array}{ccccc}
      x_E+u &  &  & \cdots & \cdots\\
       & x_E+u &  & \cdots & \cdots\\
       & & x_E+u & & \\
       & \ast & & \ddots & \vdots\\
       & & & & x_E+u\\
    \end{array}\right|= \prod_{i=1}^n (u-\gamma_i(x))^{m_i} )
  .$$
  By comparing the coefficients of $x_E^{h-1}x_{g_i}$ on both sides, it follows $$0=\sum_i m_i\lambda_{ji}=\sum_i m_i\frac{\chi_i^\mathcal{P}(g_j)}{\chi_i^\mathcal{P}(1)}$$ for $g_i\neq E$. Since the degree of $\Theta(x)$ is the order $\ell$ of $G$, we have $\sum_i m_i=\ell$.
\end{proof}

\begin{theorem}\label{cor:etchi1}  Let $e_i,\chi_i(1)$ be as in Theorem \ref{thm:orth}. Then
$e_i\chi_i(1)=m_i$ for $1\leq i\leq n$.

  \end{theorem}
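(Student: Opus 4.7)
The plan is to show that the vectors $(m_1,\dots,m_n)$ and $(e_1\chi_1(1),\dots,e_n\chi_n(1))$ satisfy the same non-degenerate linear system, and then invoke uniqueness.

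First I would collect the two sets of identities that these vectors satisfy. Proposition \ref{prop:corthogonal} gives
\[
\sum_{i=1}^n m_i = \ell, \qquad \sum_{i=1}^n m_i \lambda_{ji} = 0 \quad (g_j\neq E),
\]
and since $\lambda_{1i}=1$ by part (i) of the theorem preceding Lemma \ref{lem:pij}, the first identity can be rewritten as $\sum_i m_i \lambda_{1i} = \ell$. Altogether, $(m_1,\dots,m_n)$ solves
\[
\sum_{i=1}^n v_i \lambda_{ji} = \ell\,\delta_{j1}, \qquad 1\le j\le n. \tag{$\ast$}
\]
On the other side, using $\chi_{ji}=\chi_i(1)\lambda_{ji}$, parts (v) and (vi) of Theorem \ref{thm:orth} read
\[
\sum_{i=1}^n e_i\chi_i(1)\lambda_{ji} = \sum_{i=1}^n e_i\chi_{ji}
=\begin{cases}\ell & j=1,\\ 0 & j\neq 1,\end{cases}
\]
so $(e_1\chi_1(1),\dots,e_n\chi_n(1))$ is also a solution of $(\ast)$.

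Next I would argue that $(\ast)$ has a unique solution. The coefficient matrix is $(\lambda_{ji})_{1\le j,i\le n}$, whose $i$-th column is $\gamma_i^{\mathrm{T}}=(\lambda_{1i},\dots,\lambda_{ni})^{\mathrm{T}}$. But $\gamma_1,\dots,\gamma_n$ are precisely the rows of the representation matrix $\mathcal{R}$, and $\mathcal{R}$ is invertible by Theorem \ref{thm:crsemi} together with the semisimplicity of $k\mathcal{P}$ established in Theorem \ref{thm:semisimPart}. Hence $(\ast)$ admits a unique solution, forcing $m_i=e_i\chi_i(1)$ for every $1\le i\le n$.

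The potential subtlety — really the only thing to watch — is bookkeeping: making sure the indices in the relations of Theorem \ref{thm:orth} and of Proposition \ref{prop:corthogonal} are aligned to the same ordering of the $\gamma_t$'s (equivalently the same ordering of the simultaneous eigenspaces used to diagonalize the $\mathcal{A}_i$'s), and checking that the case $j=1$ truly corresponds to $g_j=E$ with $\lambda_{1t}=1$ so that the sum $\sum_i m_i=\ell$ and the sum $\sum_i e_i\chi_i(1)=\ell$ both contribute the same inhomogeneous term. Once these are in place, the argument is purely a uniqueness statement for an invertible linear system, with no further computation required.
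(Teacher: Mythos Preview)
Your proposal is correct and follows essentially the same argument as the paper: both show that $(m_1,\dots,m_n)$ and $(e_1\chi_1(1),\dots,e_n\chi_n(1))$ solve the same linear system $(x_1,\dots,x_n)\mathcal{R}=(\ell,0,\dots,0)$, the former by Proposition~\ref{prop:corthogonal} and the latter by parts (v)--(vi) of Theorem~\ref{thm:orth}, and then conclude by invertibility of $\mathcal{R}$. Your added remarks on aligning the indexing and checking $\lambda_{1t}=1$ are exactly the bookkeeping the paper leaves implicit.
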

\begin{proof}
Since the determinant of $\mathcal{R}$ is nonzero, the equations $$(x_1,\cdots,x_n)\mathcal{R}=(\ell,0,\cdots,0)$$ has a unique solution. By (v) and (vi) of Theorem \ref{thm:orth}, $(e_1\chi_1(1),\cdots,e_n\chi_n(1))$ is the solution of above equations. By Proposition \ref{prop:corthogonal}, $(m_1,\cdots,m_n)$ is also the solution of above equations. So $(e_1\chi_1(1),\cdots,e_n\chi_n(1))=(m_1,\cdots,m_n)$.
\end{proof}

\section{Frobenius polynomial}\label{sec:FrobPoly}
In this section, we will introduce the definition of characteristic polynomial of an element in an algebra,  Frobenius polynomial and generalized Casimir element of a partition algebra $k\mathcal{P}$. We will prove a formula of the characteristic polynomial of the generalized Casimir element and show that the partition algebra is determined by its Frobenius polynomial.

Let $\mathcal{P}={\mathcal{P}_1\cup\cdots\cup\mathcal{P}_n}$ be a good partition of Cl($G$) with $g_i$ as a representative of $\mathcal{P}_i$ and $g_1=E$. As in section \ref{sec:Frobenius theory}, the partition class sums  $\hat{C}_1,\cdots,\hat{C}_n$ consist of a basis of the partition algebra  $k\mathcal{P}$.

Let $\mathcal{A}=k\mathcal{P}$. We will keep the same assumption on $k$ as in \ref{sec:Frobenius theory}. Then $\mathfrak{L}(\hat{C_j})$ is equal to the matrix $\mathcal{A}_j$ defined in section \ref{sec:Frobenius theory}. Let
$\mathfrak{A}=\sum_{i=1}^n x_i\mathcal{A}_i$ as defined in Theorem \ref{Property:A}.

The element $\sum_{i=1}^n \frac{1}{\ell_i\ell}\hat{C}_i\hat{C}_{i'}$ is called generalized Casimir element of the partition algebra $\mathcal{A}=k\mathcal{P}$.

\begin{theorem}
  The characteristic polynomial of the generalized Casimir element of $k\mathcal{P}$ is $\prod_t(\lambda- \frac{1}{\chi_t(1)e_t})$.
\end{theorem}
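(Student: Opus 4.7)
The plan is to apply Theorem~\ref{Property:A} to the regular-representation matrix $\mathfrak{L}(c)$ of the generalized Casimir element $c=\sum_i\tfrac{1}{\ell_i\ell}\hat{C}_i\hat{C}_{i'}$, and then invoke Theorem~\ref{thm:degree} to identify the resulting eigenvalues. The essential preparatory step is to rewrite $c$ explicitly as a linear combination of the basis $\hat{C}_1,\ldots,\hat{C}_n$, so that $\mathfrak{L}(c)$ takes the form $\sum_l x_l\mathcal{A}_l$ that is handled by Theorem~\ref{Property:A}.

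Concretely, I would expand $c$ using the structure constants: $\hat{C}_i\hat{C}_{i'}=\sum_l a_{lii'}\hat{C}_l$. Lemma~\ref{basicEqs}(iii) gives $a_{lii'}=\ell_{l'ii'}/\ell_{l'}=\ell_{l'ii'}/\ell_l$ (using $\ell_{l'}=\ell_l$), so
\[
c=\frac{1}{\ell}\sum_l\frac{1}{\ell_l}\Bigl(\sum_i\frac{\ell_{l'ii'}}{\ell_i}\Bigr)\hat{C}_l.
\]
The inner sum matches the formula $p_{l'}=\sum_i \ell_{l'ii'}/\ell_i$ coming from Lemma~\ref{property:p}, and then $p_{l'}=p_l$ by Lemma~\ref{lem:pij}. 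Hence
\[
c=\sum_l\frac{p_l}{\ell\ell_l}\hat{C}_l,\qquad \mathfrak{L}(c)=\sum_l\frac{p_l}{\ell\ell_l}\mathcal{A}_l.
\]

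The right-hand side is precisely $\mathfrak{A}=\sum_i x_i\mathcal{A}_i$ specialized at $x_i=p_i/(\ell\ell_i)$. By equation~(\ref{eleDeterm}) appearing in the proof of Theorem~\ref{Property:A},
\[
|\lambda I-\mathfrak{L}(c)|=\prod_t\Bigl(\lambda-\sum_i\lambda_{it}\frac{p_i}{\ell\ell_i}\Bigr)=\prod_t\Bigl(\lambda-\frac{1}{\ell}\sum_i\lambda_{it}\frac{p_i}{\ell_i}\Bigr),
\]
and Theorem~\ref{thm:degree} identifies each bracketed sum as $1/(\chi_t(1)e_t)$, yielding the claimed factorization.

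The only subtle point is recognizing that $\sum_i\ell_{l'ii'}/\ell_i$ matches the $p_{l'}$ of Lemma~\ref{property:p}; once that identification is made, the rest of the argument is a direct specialization of already-established results and presents no real obstacle. The heart of the statement is the change of viewpoint: the Casimir is, up to the factor $1/\ell$, the element $\sum_l(p_l/\ell_l)\hat{C}_l$ whose spectrum was already computed when establishing Theorem~\ref{Mainthm}.
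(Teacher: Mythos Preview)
Your argument is correct. Both you and the paper compute the eigenvalues of $\mathfrak{L}(c)$ by simultaneous diagonalization, but you take a shorter path: you first rewrite $c$ in the basis $\hat{C}_1,\ldots,\hat{C}_n$ as $\sum_l\frac{p_l}{\ell\ell_l}\hat{C}_l$ and then invoke Theorem~\ref{thm:degree} verbatim to identify each eigenvalue as $1/(\chi_t(1)e_t)$. The paper instead leaves the Casimir in the form $\sum_i\frac{1}{\ell_i\ell}\mathcal{A}_i\mathcal{A}_{i'}$, diagonalizes, and then re-derives the eigenvalue identity from scratch via Theorem~\ref{thm:multox} and the orthogonality relations of Theorem~\ref{thm:orth}. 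Your route is more economical and makes transparent the link already hinted at in the Remark following Theorem~\ref{Mainthm}: the Casimir is precisely the element whose regular-representation spectrum was computed there (specialize $x_\gamma=p_\gamma/\ell_\gamma$). The paper's longer computation has the minor advantage of being self-contained within Section~\ref{sec:FrobPoly}, but it is essentially reproving Theorem~\ref{thm:degree} in disguise.
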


\begin{proof}
Let $\mathfrak{L}$ be the regular representation of $k\mathcal{P}$. Assume that $\mathcal{M}$ is the invertible matrix such that \[\mathcal{M}\mathcal{A}_i\mathcal{M}^{-1}=\begin{pmatrix}
\lambda_{i1}& 0& 0&\cdots &0\\
0& \lambda_{i2}& 0&\cdots &0\\
\cdot& \cdot& \cdot&\cdots &\cdot\\
0& 0& 0&\cdots &\lambda_{is}\\
\end{pmatrix}\] for $1\leq i \leq s$. Then the characteristic polynomial of the Casimir element is
$|\lambda I- \mathfrak{L}(\sum_{i=1}^n \frac{1}{\ell_i\ell}\hat{C}_i\hat{C}_{i'})|$ and

\begin{align*}
  &|\lambda I- \mathfrak{L}(\sum_{i=1}^n \frac{1}{\ell_i\ell}\hat{C}_i\hat{C}_{i'})|
  = |\lambda I- (\sum_{i=1}^n \frac{1}{\ell_i\ell}\mathcal{A}_i\mathcal{A}_{i'})| \\
  &= |\lambda I- (\sum_{i=1}^n \frac{1}{\ell_i\ell}\sum_{t=1}^n a_{tii'}\mathcal{A}_t)|\\
  &=|\lambda I- (\sum_{i=1}^n \frac{1}{\ell_i\ell}\sum_{i=1}^n a_{tii'}\mbox{Diag}(\lambda_{t1},\cdots,\lambda_{tn})|\\
  &=|\lambda I- \mbox{Diag}((\sum_{i=1}^n \frac{1}{\ell_i\ell}\sum_{t=1}^n a_{tii'}\lambda_{t1},\cdots,(\sum_{i=1}^n \frac{1}{\ell_i\ell}\sum_{t=1}^n a_{tii'}\lambda_{tn})|.
\end{align*}

On the other hand, we have
\begin{align*}
  \sum_{i=1}^n \frac{1}{\ell_i\ell}\sum_{t=1}^n a_{tii'}\lambda_{tj}
  &= \sum_{i=1}^n \frac{1}{\ell_i\ell^2}\sum_{t=1}^n \frac{\ell\ell_{t'ii'}\chi_{tj}}{\ell_t\chi_j(1)} \\
  &= \sum_{i=1}^n \frac{1}{\ell_i\ell^2}\sum_{t=1}^n \frac{\chi_{tj}}{\ell_t\chi_j(1)}\sum_{l=1}^n\frac{e_l}{\chi_l^2(1)}\chi_{t'l}\chi_{il}\chi_{i'l}\\
 &= \sum_{i=1}^n \frac{1}{\ell_i\ell^2\chi_j(1)}\sum_l\frac{e_l}{\chi_l^2(1)}\chi_{il}\chi_{i'l}\sum_t\frac{\chi_{tj}\chi_{t'l}}{\ell_t}\\
 &= \sum_{i=1}^n \frac{1}{\ell_i\ell^2\chi_j(1)}\frac{e_j\chi_{ij}\chi_{ij'}}{\chi_j^2(1)}\frac{\ell \chi_j(1)}{e_j}\\
 &=\frac{1}{\ell\chi_j^2(1)}\sum_i\frac{\chi_{ij}\chi_{ij'}}{\ell_i}\\
 &=\frac{1}{\chi_j(1)e_j}.
\end{align*}

So we have $$|\lambda I- \mathfrak{L}(\sum_{i=1}^n \frac{1}{\ell_i\ell}\hat{C}_i\hat{C}_{i'})|=\prod_t(\lambda- \frac{1}{\chi_t(1)e_t}).$$

\end{proof}

\begin{definition}
The determinant $|\mathfrak{A}|$ is called {\bf Frobenius polynomial} of the partition algebra $\mathcal{P}$. It is denoted by $F_G^\mathcal{P}(x)$. We denote by $F_G(x)$ the Frobenius polynomial of the trivial partition algebra $Z(kG)$.
\end{definition}

By Theorem \ref{thm:semisimPart}, $kP$ is semisimple. Let $\mathfrak{r}_l:k\mathcal{P}\rightarrow k (\hat{C}_i\mapsto\lambda_{il})$ be simple representation of $k\mathcal{P}$ and let $\gamma_l(x)=\sum_{i=1}^n\lambda_{il}x_i)$  for $1\leq l\leq n$.

\begin{theorem}
The Frobenius polynomial $F_G^\mathcal{P}(x)$ of $k\mathcal{P}$ is equal to  the norm form of $k\mathcal{P}$.
\end{theorem}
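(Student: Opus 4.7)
The plan is to identify the norm form $\rho_r$ of $k\mathcal{P}$ with $\prod_{t=1}^{n}\gamma_t(x)$ and then invoke Theorem \ref{Property:A}(i), which says $|\mathfrak{A}|=\prod_{t=1}^{n}\gamma_t(x)$.

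First I would note that since $\mathfrak{L}(\hat{C}_j)=\mathcal{A}_j$, the generic element $\alpha=\sum_{i=1}^{n}x_i\hat{C}_i$ has regular representation $\mathfrak{L}(\alpha)=\mathfrak{A}$. By the definition of the Frobenius polynomial, $F_G^\mathcal{P}(x)=|\mathfrak{A}|$, so the task is to show $|\mathfrak{A}|=\rho_r$, i.e.\ that $|\mathfrak{A}|$ is (up to sign) the constant term of the minimal polynomial $m_{\mathcal{A}}(\lambda,x)$ of $\mathfrak{L}(\alpha)$.

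Next I would argue that $m_\mathcal{A}(\lambda,x)$ coincides with the characteristic polynomial $c_\mathcal{A}(\lambda,x)=|\lambda I-\mathfrak{A}|=\prod_{t=1}^{n}(\lambda-\gamma_t(x))$. Because $k\mathcal{P}$ is semisimple (Theorem \ref{thm:semisimPart}) and the field is assumed large enough, the matrix $\mathcal{M}$ simultaneously diagonalizes $\mathcal{A}_1,\dots,\mathcal{A}_n$, so $\mathcal{M}\mathfrak{A}\mathcal{M}^{-1}=\mbox{Diag}(\gamma_1(x),\dots,\gamma_n(x))$ over $k(x_1,\dots,x_n)$. The matrix $\mathcal{R}=(\lambda_{ij})$ is invertible (its Gram matrix $\mathfrak{P}=\mathcal{R}'\mathcal{R}$ is nonsingular by Theorem \ref{thm:crsemi}), hence the rows $\gamma_1,\dots,\gamma_n$ are linearly independent, and therefore the linear forms $\gamma_1(x),\dots,\gamma_n(x)$ are pairwise distinct elements of $k(x_1,\dots,x_n)$. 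A diagonal matrix over a field with pairwise distinct diagonal entries has minimal polynomial equal to its characteristic polynomial, so $m_\mathcal{A}(\lambda,x)=\prod_{t=1}^{n}(\lambda-\gamma_t(x))$ and in particular $r=n$.

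Finally I would extract the norm form by setting $\lambda=0$ in
\[ m_\mathcal{A}(\lambda,x)=\lambda^r-\lambda^{r-1}\rho_1(x)+\cdots+(-1)^r\rho_r(x)=\prod_{t=1}^{n}(\lambda-\gamma_t(x)), \]
which gives $(-1)^r\rho_r(x)=\prod_{t=1}^{n}(-\gamma_t(x))=(-1)^n\prod_{t=1}^{n}\gamma_t(x)$, and since $r=n$, $\rho_r(x)=\prod_{t=1}^{n}\gamma_t(x)$. Combined with Theorem \ref{Property:A}(i), $\rho_r(x)=|\mathfrak{A}|=F_G^\mathcal{P}(x)$, as required.

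The main obstacle is ensuring that the minimal polynomial really has degree $n$ (so $r=n$) and equals the characteristic polynomial; this hinges on the distinctness of the $\gamma_t(x)$, which is precisely where the semisimplicity of $k\mathcal{P}$ and the invertibility of $\mathcal{R}$ enter the argument. Everything else is a straightforward translation between $|\mathfrak{A}|$, the eigenvalue product, and the constant coefficient of $m_\mathcal{A}$.
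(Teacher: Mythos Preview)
Your proposal is correct and follows essentially the same approach as the paper: both use semisimplicity of $k\mathcal{P}$ to diagonalize $\mathfrak{A}$, observe that the linear forms $\gamma_1(x),\dots,\gamma_n(x)$ are pairwise distinct (the paper phrases this as ``prime to each other''), conclude that the minimal polynomial of the generic element coincides with its characteristic polynomial, and identify the norm form with $\prod_t\gamma_t(x)=|\mathfrak{A}|$. Your write-up is somewhat more explicit about extracting the constant term via $\lambda=0$, but the underlying argument is the same.
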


\begin{proof}
Since $kP$ is semisimple and , the regular representation $\mathcal{L}$ is equivalent to $\oplus_{j=1}^n \mathfrak{r}_j$. This implies that there exists an invertible matrix $U$ such that $U\mathcal{A}_iU^{-1}=\mbox{Diag}(\lambda_{i1},\cdot,\lambda_{in})$. So
 $\mid \mathfrak{A}\mid=\prod_{t=1}^n\gamma_t(x)$. It follows from Theorem \ref{thm:semisimPart} that \[\gamma_1(x)=\sum_{i=1}^n\lambda_{i1}x_i,\cdots, \gamma_n(x)=\sum_{i=1}^n\lambda_{in}x_i\] are prime to each other. The definition of minimal polynomial implies that $F_G^\mathcal{P}(x)=\prod_{t=1}^n\gamma_t(x)=\prod_{t=1}^n(\sum_{i=1}^n\lambda_{it}x_i)$ is the norm form of $k\mathcal{P}$.
\end{proof}


\begin{proposition}\label{contructureant}
The Frobenius polynomial $F_G^\mathcal{P}(x)$ and the matrices $\{\mathcal{A}_i\}_{i=1}^s$  are determined by each other.
\end{proposition}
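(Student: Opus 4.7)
The forward direction is immediate: once $\mathcal{A}_1,\ldots,\mathcal{A}_n$ are given, the matrix $\mathfrak{A}=\sum_i x_i\mathcal{A}_i$ is explicit, and $F_G^\mathcal{P}(x)=|\mathfrak{A}|$ is just its determinant. The work is in the converse, so I will focus my plan there.

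For the reverse direction, the plan is to recover the structure constants $a_{lij}$ (which are exactly the entries of the $\mathcal{A}_j$) from the factorization of $F_G^\mathcal{P}(x)$ into linear forms. Recall that the partition class $\mathcal{P}_1$ containing the identity is assumed to be a singleton, so $\hat{C}_1$ is the identity of $k\mathcal{P}$ and $\mathcal{A}_1=I$; consequently every linear factor $\gamma_t(x)=\sum_i \lambda_{it}x_i$ satisfies $\lambda_{1t}=1$, i.e.\ each factor is monic in $x_1$. By the previous theorem, $F_G^\mathcal{P}(x)=\prod_{t=1}^n \gamma_t(x)$ and the $\gamma_t(x)$ are pairwise coprime (they are distinct linear forms over a sufficiently large extension). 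Unique factorization in the polynomial ring then recovers the collection $\{\gamma_1,\ldots,\gamma_n\}$, and with the monic-in-$x_1$ normalization each $\gamma_t$ is individually determined. The ordering of the $\gamma_t$ is arbitrary, but this merely reorders the indirect index $t$ of the irreducible representations $\mathfrak{r}_t$ and does not affect the $\mathcal{A}_i$.

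With the $\lambda_{it}$ in hand I would form the matrix $\mathcal{R}=(\lambda_{it})$. By Theorem \ref{thm:crsemi} together with Theorem \ref{thm:semisimPart}, $\mathcal{R}$ is invertible over a splitting field, so the matrix $\mathcal{S}=(\varsigma_{it})$ defined by $\mathcal{S}'\mathcal{R}=I$ is uniquely determined by $\mathcal{R}$. The identity
\[
a_{ijk}=\sum_{t=1}^n \varsigma_{it}\lambda_{jt}\lambda_{kt},
\]
which was established in the proof of Theorem \ref{Othmatrix}, then recovers every structure constant $a_{ijk}$ purely from the data of $\mathcal{R}$ and $\mathcal{S}$. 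Finally, the entries of $\mathcal{A}_j$ are $(\mathcal{A}_j)_{l,i}=a_{lij}$ by construction, so the matrices $\mathcal{A}_1,\ldots,\mathcal{A}_n$ are reconstructed.

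The one step that requires a moment of care is the uniqueness of the linear factorization: this is where the normalization $\lambda_{1t}=1$ (coming from the assumption on $\mathcal{P}_1$) and the coprimality of the $\gamma_t$ (coming from the semisimplicity of $k\mathcal{P}$ via Theorem \ref{thm:semisimPart}) both enter. Once that is noted, the remainder is a direct application of the orthogonality identity above. I expect no further obstacle.
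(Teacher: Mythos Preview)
Your proposal is correct and follows essentially the same route as the paper: factor $F_G^\mathcal{P}(x)$ into its distinct linear forms $\gamma_t(x)=\sum_i\lambda_{it}x_i$, use semisimplicity of $k\mathcal{P}$ to conclude that the matrix $\mathcal{R}=(\lambda_{it})$ is invertible, and then solve the linear system $\lambda_{it}\lambda_{jt}=\sum_\ell a_{\ell ij}\lambda_{\ell t}$ for the structure constants. The only cosmetic difference is that the paper appeals directly to uniqueness of the solution of this system, whereas you write the solution out explicitly via $\mathcal{S}=(\mathcal{R}')^{-1}$ and the identity $a_{ijk}=\sum_t \varsigma_{it}\lambda_{jt}\lambda_{kt}$ from the proof of Theorem~\ref{Othmatrix}; you are also a bit more careful than the paper in noting why the normalization $\lambda_{1t}=1$ pins down each linear factor uniquely rather than only up to scalar.
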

\begin{proof} It is obvious that $F_G^\mathcal{P}(x)$ is determined by $\{\mathcal{A}_i\}_{i=1}^s$. Given $F_G^\mathcal{P}(x)$. Then it can be decomposed into product of irreducible polynomials $F(x)=\prod_{t=1}^n\gamma_t(x)$.  Let $\gamma_t(x)=\sum_{i=1}^n\lambda_{it}x_i$ for $1\leq i \leq n$.
Since $k\mathcal{P}$ is semisimple, by Theorem \ref{Property:A} and Theorem \ref{thm:crsemi}, the matrix
\[\begin{pmatrix}
\lambda_{1,1}&\lambda_{2,1}&.&.&.&\lambda_{n,1}\\
\lambda_{1,2}&\lambda_{2,2}&.&.&.&\lambda_{n,2}\\
.&.&.&.&.&.\\
\lambda_{1,i}&\lambda_{2,i}&.&.&.&\lambda_{n,i}\\
.&.&.&.&.&. \\
\lambda_{1,n}&\lambda_{2,n}&.&.&.&\lambda_{n,n}
\end{pmatrix}
\] is reversible. So all $a_{\ell ij}(1\leq \ell,i,j\leq n)$ are the unique solution of the equations
\begin{equation}\label{equ:1RegRep2}\lambda_{it}\lambda_{jt}=\sum_{\ell=1}^n a_{\ell ij} \lambda_{it},1\leq i,j,t\leq n.\end{equation}
By definition of $\mathcal{A}_i$,
So $\{\mathcal{A}_i\}_{i=1}^s$ is determined by $F_G^\mathcal{P}(x)$.
\end{proof}

In the next, we will introduce the generalized commutators and the definition of the number $p_{i_1\cdots i_r}$ and prove that the Frobenius polynomial is determined by $p_{ijl}$.



Let $\mathcal{P}={\mathcal{P}_1\cup\cdots\cup\mathcal{P}_n}$ be a good partition of Cl($G$).

Let $\mathcal{A}=k\mathcal{P}$ and let $\mathcal{L}$ be its regular representation  with respect to the basis $\hat{C_1},\cdots,\hat{C_n}$. Then $\mathcal{L}(\hat{C_i})=\mathcal{A}_i$ is a $n\times n$-matrix. Assume that \[m_{\mathcal{A}}(\lambda,x_1,\cdots,x_n)=\lambda^n-\lambda^{n-1}\rho_1(x_1,\cdots,x_n)+\cdots + (-1)^n \rho_n(x_1,\cdots,x_n)\] is the minimal polynomial. Then \begin{equation}\label{equ:elesym}\rho_i(x_1,\cdots,x_n)=\sigma_i(\gamma_1(x),\cdot,
\gamma_n(x))(i=1,\cdots,n)\end{equation} and $\rho_n(x_1,\cdots,x_n)=F_G^\mathcal{P}(x)$, where $\sigma_1(y_1,\cdots,y_n)=y_1+\cdots+y_n$,$\cdots$,$\sigma_n=y_1y_2...y_n$ are the elementary symmetric polynomials.

\begin{lemma}\cite{MA}\label{lemm:psum}
Let $\mathfrak{s}_i(y_1,\cdots,y_n)=y_1^i+\cdots+y_n^i$ for $i=1,\cdots,n$. Then $$(-1)^l\sigma_l(y_1,\cdots,y_n)=\sum_\lambda \frac{\varepsilon_\lambda \prod \mathfrak{s}_i^{m_i}}{z_\lambda},$$ where $\lambda=1m_1+2m_i+\cdots $ runs over all partitions of $l$,  $\varepsilon_\lambda=(-1)^{m_1+\cdots+m_l}$ and $z_\lambda=\prod i^{m_i}m_{i}£¡$.
\end{lemma}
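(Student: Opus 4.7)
The plan is to prove this classical identity (Newton's formula expressing elementary symmetric polynomials in power sums) by the generating-function method. I would introduce the generating series
\[
E(t) \;=\; \prod_{i=1}^n (1 - y_i t),
\]
and compute it in two different ways. On the one hand, expanding the product directly gives
\[
E(t) \;=\; \sum_{l \geq 0} (-1)^l\, \sigma_l(y_1,\dots,y_n)\, t^l,
\]
so the coefficient of $t^l$ on the left-hand side of the claimed identity is $(-1)^l \sigma_l$.

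For the other side, I would take the logarithm of $E(t)$ (formally, as a power series in $t$) and apply the Taylor expansion $-\log(1-u) = \sum_{k\geq 1} u^k/k$ to each factor. This yields
\[
\log E(t) \;=\; \sum_{i=1}^n \log(1 - y_i t) \;=\; -\sum_{k \geq 1} \frac{\mathfrak{s}_k(y)}{k}\, t^k,
\]
using that $\sum_i y_i^k = \mathfrak{s}_k$. Exponentiating and factoring the exponential over the independent $k$-variables gives
\[
E(t) \;=\; \prod_{k \geq 1} \exp\!\left( -\frac{\mathfrak{s}_k}{k}\, t^k \right) \;=\; \prod_{k \geq 1}\, \sum_{m_k \geq 0} \frac{(-1)^{m_k}\mathfrak{s}_k^{m_k}}{k^{m_k}\, m_k!}\, t^{k m_k}.
\]

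The final step is to extract the coefficient of $t^l$. A monomial $\prod_k t^{k m_k}$ contributes to $t^l$ exactly when $\sum_k k m_k = l$, which indexes the partitions $\lambda \vdash l$ with $m_k$ parts equal to $k$. Collecting the corresponding factors produces
\[
[t^l]\, E(t) \;=\; \sum_{\lambda \vdash l} \frac{(-1)^{m_1+m_2+\cdots}\, \prod_k \mathfrak{s}_k^{m_k}}{\prod_k k^{m_k}\, m_k!} \;=\; \sum_{\lambda \vdash l} \frac{\varepsilon_\lambda \prod_k \mathfrak{s}_k^{m_k}}{z_\lambda},
\]
which matches the right-hand side of the stated formula. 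Equating the two expressions for $[t^l] E(t)$ gives the identity.

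There is no genuine obstacle here; the only matters requiring care are bookkeeping of signs (the $(-1)^l$ on the left matches up with $\varepsilon_\lambda = (-1)^{\sum m_i}$ because $\sum k m_k = l$ forces parity considerations that work out correctly only after the $\log$-exp manipulation) and verifying that the formal power series manipulations are valid — which they are, since we are working in $\mathbb{Q}[y_1,\ldots,y_n][[t]]$ and the logarithm of $E(t)$ has no constant term.
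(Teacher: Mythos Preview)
Your proof is correct and is the standard generating-function argument (exactly the one in Macdonald~\cite{MA}, Chapter~I, \S2). The paper itself gives no proof of this lemma; it simply cites~\cite{MA}, so your argument fills in precisely what the cited reference contains.
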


\begin{definition}\label{def:gpijk}
An element $g$ of $G$ is called generalized commutator with respect to $\mathcal{P}$ if $g=xy$ with $x\in \mathcal{P}_i$ and $y\in \mathcal{P}_{i'}$ for some $1\leq i \leq n$. Assume that $\hat{C}_{t_1},\cdots,\hat{C}_{t_s}$ are all partition classes appearing in the product $\hat{C}_{i_1}\cdots \hat{C}_{i_r}$ with non-zero coefficient, where $1\leq i_1,\cdots,i_r \leq n$. Then  $p_{i_1,\cdots,i_r}$ is the number of solutions of the equation \begin{equation}\label{def:commutators} xy=g_{t_l}\end{equation} where
$x\in \mathcal{P}_i$,$y\in \mathcal{P}_{i'}$ for  $1\leq i \leq n$ and $g_{t_l}$ is a representative of $ \mathcal{P}_{i_l}$ for $1\leq l\leq s$.

\end{definition}

If $r=1\mbox{ or }2$, the definitions of $p_{i_1},p_{i_1,i_2}$ above are the same as definitions in Lemma \ref{property:p}.

\begin{lemma}
 \[\mathfrak{s}_r(\gamma_1(x),\cdots,\gamma_n(x))=\sum_{1\leq i_1,\cdots,i_r \leq n} \frac{1}{\ell}p_{i_1,\cdots,i_r}x_{i_1}\cdots x_{i_r}.\]
\end{lemma}

\begin{proof}
The coefficient of monomial $x_{i_1}x_{i_2}\cdots x_{i_r}$ in $\mathfrak{s}_r(\gamma_1(x),\cdots,\gamma_n(x))$ is the trace of $\mathcal{A}_{i_1}\cdots \mathcal{A}_{i_r}$.


The coefficient of $\hat{C}_j$ in $\hat{C}_{i_1}\hat{C}_{i_2}\cdots \hat{C}_{i_r}$ is $\frac{\ell_{j'i_1,\cdots,i_r}}{\ell_j}$. By Lemma \ref{lem:ncommutators}, each $g\in \mathcal{P}_j$ can be written into $\sum_{i=1}^n\frac{\ell_{j i i'}}{\ell_i}$ generalized commutators. So $\mathcal{P}_j$ contributes $\frac{\ell_{j'i_1,\cdots,i_r}\sum_{i=1}^n\frac{\ell\ell_{j i i'}}{\ell_i}}{\ell_j}$\par  to $p_{i_1,\cdots,i_r}$. Thus we have the following equation:
\begin{eqnarray*}
&&p_{i_1,\cdots,i_r}= \sum_{j=1}^n\frac{\ell_{j'i_1,\cdots,i_r}}{\ell_j}\sum_{i=1}^n\frac{\ell_{j i i'}}{\ell_i}\\
&=&\sum_{i=1}^n \frac{1}{\ell_i}\sum_{j=1}^n \frac{\ell_{j'i_1,\cdots,i_r}\ell_{j i i'}}{\ell_j}\\
&=&\sum_{i=1}^n \frac{\ell_{ii'i_1,\cdots,i_r}}{\ell_i}\\
&=&\sum_{i=1}^n \frac{1}{\ell\ell_i}\sum_{l=1} \frac{e_l}{\chi_l(1)^{r+1}}\chi_{i_1l}\cdots\chi_{i_rl}\chi_{il}\chi_{i'l}\\
&=&\frac{1}{\ell}\sum_{l=1} \frac{e_l}{\chi_l(1)^{r+1}}\chi_{i_1l}\cdots\chi_{i_rl}\sum_{i=1}^n\frac{1}{\ell_i}\chi_{il}\chi_{i'l}\\
&=& \sum_{l=1} \frac{\chi_{i_1l}}{\chi_l(1)}\cdots \frac{\chi_{i_rl}}{\chi_l(1)}\\
&=&\sum_{l=1}^n \lambda_{i_1 l}\lambda_{i_2 l}\cdots \lambda_{i_r l}=\mbox{Tr}(A_{i_1}\cdots A_{i_r})
\end{eqnarray*}
\end{proof}

\begin{theorem}\label{thm:pijk}

The Frobenius polynomial $F^\mathcal{P}_G(x)$ is determined by the numbers $p_{ijl}$ where $1\leq i,j,l\leq n$.

\end{theorem}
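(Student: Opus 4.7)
The plan is to recover from the data $\{p_{ijl}\}_{1\leq i,j,l\leq n}$ first the Gram-type matrix $P=(p_{ij})$, then all structure constants $a_{sij}$ of the partition algebra with respect to the basis $\hat{C}_1,\ldots,\hat{C}_n$, and finally the regular representation matrices $\mathcal{A}_j=(a_{sij})_{s,i}$; the Frobenius polynomial $F_G(x)=|\sum_j x_j\mathcal{A}_j|$ is then an explicit determinant in $x_1,\ldots,x_n$, so it is determined too.

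The first observation is that the identity class sum $\hat{C}_1$ is the identity of the partition algebra, so $\mathcal{A}_1=I$. Combined with the trace identity $p_{i_1\cdots i_r}=\mbox{Tr}(\mathcal{A}_{i_1}\cdots\mathcal{A}_{i_r})$ established in the lemma immediately preceding the theorem, this gives
$$p_{ij1}=\mbox{Tr}(\mathcal{A}_i\mathcal{A}_j\mathcal{A}_1)=\mbox{Tr}(\mathcal{A}_i\mathcal{A}_j)=p_{ij}.$$
Hence every entry of the $n\times n$ matrix $P:=(p_{ij})$ can be read off from the given three-index data.

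The key linear-algebra step comes from the structure-constant expansion $\mathcal{A}_i\mathcal{A}_j=\sum_s a_{sij}\mathcal{A}_s$. Taking the trace against $\mathcal{A}_l$ yields
$$p_{ijl}=\mbox{Tr}(\mathcal{A}_i\mathcal{A}_j\mathcal{A}_l)=\sum_{s=1}^n a_{sij}\,p_{sl},$$
i.e.\ for each fixed pair $(i,j)$ we get the matrix equation
$(p_{ij1},\ldots,p_{ijn})=(a_{1ij},\ldots,a_{nij})\,P$. By Lemma \ref{determNot0} the determinant $|P|=|(p_{ij})|$ is nonzero, so this system has the unique solution
$(a_{1ij},\ldots,a_{nij})=(p_{ij1},\ldots,p_{ijn})\,P^{-1},$
recovering every structure constant $a_{sij}$ from the collection $\{p_{ijl}\}$.

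Once all $a_{sij}$ are known, each regular representation matrix $\mathcal{A}_j=(a_{sij})_{1\leq s,i\leq n}$ is determined, and therefore $F_G(x)=|\sum_{j=1}^n x_j\mathcal{A}_j|$ is determined as well (this is also a direct application of Proposition \ref{contructureant}, which shows that $F_G^{\mathcal{P}}(x)$ and the family $\{\mathcal{A}_i\}$ determine each other). The only nontrivial input is the invertibility of $P$, which is exactly the content of Lemma \ref{determNot0}; so once that invertibility is in hand the argument is a single inversion of an $n\times n$ matrix and no serious obstacle remains.
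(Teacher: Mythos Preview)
Your proof is correct and is in fact cleaner than the paper's own argument. The paper does not invert the Gram matrix $P=(p_{ij})$ directly; instead it argues via symmetric functions: since $F_G^{\mathcal P}(x)=\rho_n(x)=\sigma_n(\gamma_1(x),\ldots,\gamma_n(x))$, and by Newton's identities (Lemma~\ref{lemm:psum}) each elementary symmetric polynomial $\sigma_l$ is a polynomial in the power sums $\mathfrak s_1,\ldots,\mathfrak s_l$, it suffices to know all $\mathfrak s_r(\gamma_1(x),\ldots,\gamma_n(x))$, whose coefficients are the $p_{i_1\cdots i_r}$. The paper then invokes an external result of Hoehnke \cite{H1} to the effect that, for a commutative algebra, the coefficients $\rho_i$ of the minimal polynomial with $i\geq 4$ are already determined by $\rho_1,\rho_2,\rho_3$; hence only the three-index data $p_{ijl}$ (together with the $p_i,p_{ij}$ they contain) are needed.

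Your route bypasses \cite{H1} entirely: from $p_{ij}=p_{ij1}$ you get $P$, from the trace identity $p_{ijl}=\sum_s a_{sij}p_{sl}$ and the invertibility of $P$ (Lemma~\ref{determNot0}) you solve for every structure constant, and then $F_G(x)=\lvert\sum_j x_j\mathcal A_j\rvert$ drops out. This is more elementary and constructive, and it makes transparent exactly which ingredient carries the weight, namely the nondegeneracy of the trace form. The paper's approach, on the other hand, packages the same nondegeneracy inside Hoehnke's theorem and phrases the result in the language of norm forms; that viewpoint is what motivates calling $\rho_1,\rho_2,\rho_3$ the ``basic'' polynomials, but it is not needed for the bare statement you were asked to prove.
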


\begin{proof}
We call polynomials  \[\rho_1(x_1,\cdots,x_n),\rho_2(x_1,\cdots,x_n)\mbox{ and }\rho_3(x_1,\cdots,x_n)\] the first-three-power-sum polynomials.

By \cite{H1}, polynomials $\rho_i(x_1,\cdots,x_n)(i\geq 4)$ can be determined by  first-three-power-sum polynomials. Since $F_G^\mathcal{P}(x)=\rho_n(x_1,\cdots,x_n)$  is determined by first-power-sum polynomials. By lemma \ref{lemm:psum} and equation (\ref{equ:elesym}),  first-power-sum polynomials are determined by \[\mathfrak{s}_1(\gamma_1(x),\cdots,\gamma_n(x)),\mathfrak{s}_2(\gamma_1(x),\cdots,\gamma_n(x)) \mbox{, } \mathfrak{s}_3(\gamma_1(x),\cdots,\gamma_n(x)),\] and they are determined by $p_{ijl}$. So  $F_G^\mathcal{P}(x)$ is determined by $p_{ijl}$.
\end{proof}

\section{On ordinary characters}\label{sec:degreepoly}

Assume that $\mathcal{P}$ is a trivial partition of Cl($G$) i.e. each partition class $\mathcal{P}_i$ of the good partition $\mathcal{P}$ consists of only one conjugate class of $G$ . The characters determined by the trivial partition are called ordinary characters in this paper. The definition equations of the ordinary characters can be simplified. So the polynomials related to the ordinary characters have different form and some special properties. We will list these differences in this section at first. Then we will introduce a series of invariants of a finite groups and show the chief factors and its multiplicity of a finite group is determined by these invariants.


 The  ordinary characters are defined by the following equations
 $$\ell_ix_i\ell_jx_j=x_1\sum_{l=1}^s \ell_{l'ij}x_l, 1\leq i,j,l\leq s .$$

Let $\Theta_G(x)=\prod_{i=1}^s\Phi_i^{e_i}(x)$ be the irreducible factorization of $\Theta_G(x)$. Then each $\Phi_i(x)$ define the unique character $\chi_i$ and there is a bijection between characters and irreducible factors of $\Theta_G(x)$.
And we have the following orthogonal relations:

\begin{itemize}
\item[(i).]{$\sum_i\ell_i\chi_{ij}\chi_{i'l}=0(l\neq j)$,}
\item[(ii).]{$\sum_i\ell_i\chi_{ij}\chi_{i'j}=\frac{\ell\chi_j(1)}{e_j}$,}
\item[(iii).]{$\sum_l{\frac{e_l}{\chi_l(1)}}\chi_{il}\chi_{jl}=0(i\neq j')$,}
\item[(iv).]{$\sum_l{\frac{e_l}{\chi_l(1)}}\chi_{il}\chi_{i'l}=\frac{\ell}{\ell_i}$,}
\item[(v).]{$\sum_l{e_l}\chi_{il}=0$,}
\item[(vi).]{$\sum_le_l\chi_l(1)=\ell$,}

\end{itemize}
where $e_j$ is defined in the same way as in \ref{sec:realization}. Furthermore, it is shown that both $e_j$ and $\chi_j(1)$ are equal to the degree $f_j$ of $\Phi_j(x)$ in section 9 of \cite{F2}.
Similarly, the following equation holds
$$\frac{\ell \ell_{i_1i_2\cdots i_r}}{\ell_{i1}\ell_{i2}\cdots \ell_{ir}}=\sum_l\frac{e_l}{\chi_l^{r-1}(1)}\chi_{i_1l}\cdots \chi_{i_rl}.$$

We can define the number $p_{i_1,\cdots,i_r}$ by commutators. There is a little difference from the definition in section \ref{sec:FrobPoly}.

\begin{definition}\label{def:pijk}
For $1\leq i_1,\cdots,i_r \leq n$,  $p_{i_1,\cdots,i_r}$ is the number of solutions of the equation \begin{equation*}\label{def:commutators} S^{-1}R^{-1}SR=g_{i_1}\cdots g_{i_r}\end{equation*} where
$S,R$ runs over $G$ and $g_{i_j}\in C_{i_j}$ for $1\leq j\leq r$.
\end{definition}

\begin{lemma}\label{lem:ncommutators}
Give $g\in C_i$, then the number of solutions of the equation $s^{-1}r^{-1}sr=g$ is $\sum_{j}\frac{\ell\ell_{ijj'}}{\ell_i\ell_j}$, where $r,s\in G$.
\end{lemma}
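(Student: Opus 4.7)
The plan is to rewrite the equation $s^{-1}r^{-1}sr = g$ in a form that lets us count solutions by first fixing $s$ and then counting the valid $r$, and finally partition the count according to which conjugacy class contains $s$.

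First I would observe that $s^{-1}r^{-1}sr = g$ is equivalent to $r^{-1}sr = sg$. Thus, for a fixed $s$, the number of $r \in G$ satisfying the equation is $|C_G(s)|$ if $sg$ is conjugate to $s$, and $0$ otherwise. Partitioning by the conjugacy class $C_j$ containing $s$ (so that $|C_G(s)| = \ell/\ell_j$ is constant on $C_j$), the total count becomes
\[
\sum_{j=1}^n \frac{\ell}{\ell_j}\cdot\bigl|\{s \in C_j : sg \in C_j\}\bigr|.
\]
So it remains to show that $|\{s \in C_j : sg \in C_j\}| = \ell_{ijj'}/\ell_i$, after which summing gives the required formula $\sum_j \frac{\ell\,\ell_{ijj'}}{\ell_i\ell_j}$.

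For the key identification, set $t = sg$, so that counting $s \in C_j$ with $sg \in C_j$ is the same as counting pairs $(s,t) \in C_j \times C_j$ with $s^{-1}t = g$, equivalently pairs $(a,b) \in C_{j'} \times C_j$ with $ab = g$ (take $a = s^{-1}$, $b = t$). Since $g \in C_i$, this is $|\{(a,b,c) \in C_{j'}\times C_j\times\{g^{-1}\}: abc = 1\}|$. The total number of such triples $(a,b,c)$ with $c$ ranging over all of $C_{i'}$ is $\ell_{j'ji'}$, and by homogeneity each $c \in C_{i'}$ contributes the same number, namely $\ell_{j'ji'}/\ell_{i'} = \ell_{j'ji'}/\ell_i$. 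Finally, using the symmetries in Lemma \ref{basicEqs}(i)--(ii) (cyclic rotation together with simultaneous inversion of indices), one checks $\ell_{j'ji'} = \ell_{ijj'}$, giving the asserted count.

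I expect no serious obstacle here; the argument is a straightforward double counting. The only minor subtlety is invoking the correct symmetry from Lemma \ref{basicEqs} so that the triple index is displayed as $ijj'$ rather than $j'ji'$ or any of its cyclic/inverse variants, and being careful that $\ell_{i'} = \ell_i$ since inversion preserves class sizes. Assembling these pieces, the identity $\sum_j \frac{\ell\,\ell_{ijj'}}{\ell_i\ell_j}$ follows immediately.
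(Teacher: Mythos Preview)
Your proof is correct and follows essentially the same double-counting strategy as the paper, relating the commutator count to the structure constants $\ell_{ijj'}$ and centralizer orders. The only cosmetic difference is that you partition by the conjugacy class of $s$ (fixing $s$ and counting $r$ via $|C_G(s)|$), whereas the paper partitions by the class of $r$ (counting pairs $(r^{-1},\,s^{-1}rs)\in C_{j'}\times C_j$ with product $g^{-1}$ and then lifting to $s$ via $|C_G(r)|$); the two are dual and yield the same sum.
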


\begin{proof}
The number of solutions of the equation $sr=rsg$ is equal to the number of ways that express $g$ into commutators. The number of solutions of the equation $sr=rsg$ is equal to the number of solutions of the equation $r^{-1}s^{-1}rsg=1$. When $r$ runs over $C_j$, the number of solutions of the equation $r^{-1}s^{-1}rsg=1$ is $\frac{l_{i j j'}}{l_i}$. If $g=r^{-1}s^{-1}rs=[r,s]$, then the equation $g=r^{-1}x^{-1} s^{-1}rsx=[r,sx]$ holds true for any $x\in C_G(s^{-1}rs)$. This implies that given a solution $(r^{-1},s^{-1}rs)\in C_{j'}\times C_{j}$ of the equation $r^{-1}s^{-1}rsg=1$ then it can contribute to $|C_G(r)|$ commutators which are equal to $g$. Thus $C_{j'}\times C_{j}$ contributes to $\frac{|C_G(r)|l_jl_{i j j'}}{l_i l_j}=\frac{ll_{i j j'}}{l_i l_j}$ commutators. We will get all commutators which are equal to g when $j$ runs over $n$ conjugate classes. So the number of commutators equal to $g$ is $\sum_{j=1}^n\frac{ll_{i jj'}}{l_i l_j} $.
\end{proof}

\begin{lemma}
 \[\mathfrak{s}_r(\gamma_1(x),\cdots,\gamma_n(x))=\sum_{1\leq i_1,\cdots,i_r \leq n} \frac{1}{\ell}p_{i_1,\cdots,i_r}x_{i_1}\cdots x_{i_r}.\]
\end{lemma}

\begin{proof}
The coefficient of monomial $x_{i_1}x_{i_2}\cdots x_{i_1}$ in $\mathfrak{s}_r(\gamma_1(x),\cdots,\gamma_n(x))$ is the trace of $A_{i_1}\cdots A_{i_r}$.


The coefficient of $\hat{C}_j$ in $\hat{C}_{i_1}\hat{C}_{i_2}\cdots \hat{C}_{i_r}$ is $\frac{\ell_{j'i_1,\cdots,i_r}}{\ell_j}$. By Lemma \ref{lem:ncommutators},  $s^{-1}r^{-1}sr=g$ has $\sum_{i=1}^n\frac{\ell\ell_{j i i'}}{\ell_i}$ solutions for $g\in C_j$. Then the contribution of $C_j$ to solutions to equation \ref{def:commutators} is $\frac{\ell_{j'i_1,\cdots,i_r}}{\ell_j}\sum_{i=1}^n\frac{\ell\ell_{j i i'}}{\ell_i}$ . So the number of solutions of equation \ref{def:commutators} is
\begin{eqnarray*}
&&p_{i_1,\cdots,i_r}= \sum_{j=1}^n\frac{\ell_{j'i_1,\cdots,i_r}}{\ell_j}\sum_{i=1}^n\frac{\ell\ell_{j i i'}}{\ell_i}\\
&=&\sum_{i=1}^n \frac{\ell}{\ell_i}\sum_{j=1}^n \frac{\ell_{j'i_1,\cdots,i_r}\ell_{j i i'}}{\ell_j}
=\sum_{i=1}^n \frac{\ell\ell_{ii'i_1,\cdots,i_r}}{\ell_i}\\
&=&\sum_{i=1}^n \ell_i\ell_{i_1}\cdots\ell_{i_1}\frac{\ell\ell_{ii'i_1,\cdots,i_r}}{\ell_i\ell_i\ell_{i_1}\cdots\ell_{i_1}}\\
&=&\sum_{i=1}^n \ell_i\ell_{i_1}\cdots\ell_{i_1}\sum_{l=1} \frac{1}{\chi_l(1)^{r}}\chi_{i_1l}\cdots\chi_{i_rl}\chi_{il}\chi_{i'l}\\
&=&\ell_{i_1}\cdots\ell_{i_1}\sum_{l=1} \frac{1}{\chi_l(1)^{r}}\chi_{i_1l}\cdots\chi_{i_rl}\sum_{i=1}^n\ell_i\chi_{il}\chi_{i'l}\\
&=&\ell \sum_{l=1} \frac{\ell_{i_1}\chi_{i_1l}}{\chi_l(1)}\cdots \frac{\ell_{i_r}\chi_{i_rl}}{\chi_l(1)}\\
&=&\ell\sum_{l=1}^n \zeta_{i_1 l}\zeta_{i_2 l}\cdots \zeta_{i_r l}=\ell\mbox{Tr}(A_{i_1}\cdots A_{i_r})
\end{eqnarray*}
\end{proof}

Let $\mathcal{P}$ be the trivial partition of Cl($G$). We will denote $F_G^\mathcal{P}(x)$ by $F_G(s)$. Similar to Theorem \ref{thm:pijk}, we prove Theorem B

\begin{theorem}\label{thm:TFPijk} Let $\mathcal{P}$ be the trivial partition of Cl($G$). Then
\begin{enumerate}
\item[(i)]{The Frobenius polynomial $F_G(x)$ and the ordinary characters table $T_G(\mathcal{P})$   are determined by each other.}
\item[(ii)]{The Frobenius polynomial $F_G(x)$ is determined by the numbers $p_{ijk}$ where $1\leq i,j,k\leq n$.}
\end{enumerate}
\end{theorem}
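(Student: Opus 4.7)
The plan is to dispatch the two claims in turn, leaning on material already developed.

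For part (ii), the statement is the specialization of Theorem \ref{thm:pijk} to the trivial partition, where $k\mathcal{P} = Z(kG)$. Here the numbers $p_{i_1,\ldots,i_r}$ as defined in Section \ref{sec:degreepoly} agree (via the trace calculation given there) with $\ell\cdot\mathrm{Tr}(\mathcal{A}_{i_1}\cdots\mathcal{A}_{i_r})$. Knowledge of $p_{ijl}$ for all triples therefore determines the three power sums $\mathfrak{s}_r(\gamma_1(x),\ldots,\gamma_n(x))$ for $r\in\{1,2,3\}$, and the result of \cite{H1} cited in the proof of Theorem \ref{thm:pijk} then determines all higher elementary symmetric polynomials in $\gamma_1(x),\ldots,\gamma_n(x)$, hence the norm form $F_G(x)$.

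For part (i) I would prove each direction separately. From $T_G$ to $F_G(x)$: column orthogonality gives $\ell/\ell_i = \sum_t |\chi_t(g_i)|^2$, so the class sizes are recovered; the factorization from Section \ref{sec:FrobPoly} then yields
\[
F_G(x) \;=\; \prod_{t=1}^{n}\Bigl(\sum_{i=1}^{n}\frac{\ell_i\,\chi_t(g_i)}{\chi_t(1)}\,x_i\Bigr),
\]
which is manifestly determined by $T_G$. Conversely, given $F_G(x)$, Proposition \ref{contructureant} recovers the structure-constant matrices $\mathcal{A}_1,\ldots,\mathcal{A}_n$; the identity-class index is pinned down by $\mathcal{A}_1 = I$, the inverse-class involution $i\mapsto i'$ is characterized as the unique index with $a_{1,i,i'}\neq 0$, and one reads off $\ell_i = a_{1,i,i'}$. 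Factoring $F_G(x) = \prod_t \gamma_t(x)$ over a splitting field with $\gamma_t(x) = \sum_i \lambda_{it} x_i$ produces the central-character values $\lambda_{it} = \ell_i\chi_t(g_i)/\chi_t(1)$. To separate the degrees from the ratios, I invoke orthogonality (ii) of Theorem \ref{thm:orth} specialized to the ordinary case (where $e_t = \chi_t(1)$), giving
\[
\chi_t(1)^2\sum_i\frac{\lambda_{it}\lambda_{i't}}{\ell_i} \;=\; \ell.
\]
Taking the positive square root determines $\chi_t(1)$, and then $\chi_t(g_i) = \chi_t(1)\lambda_{it}/\ell_i$ reconstructs the entire table.

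The main obstacle is the second direction of (i): $F_G(x)$ directly encodes only the central characters (the ratios $\lambda_{it}$), so reconstructing absolute degrees requires invoking an orthogonality relation, which in turn demands identifying the inverse-class involution purely from the structure constants. The observations that $\mathcal{A}_1 = I$ and that $a_{1,i,i'}$ is the unique nonzero entry in the identity row of $\mathcal{A}_i\mathcal{A}_{(\cdot)}$ are exactly what make this intrinsic extraction possible.
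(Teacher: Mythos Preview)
Your proposal is correct and follows essentially the same architecture as the paper: part (ii) is the trivial-partition case of Theorem \ref{thm:pijk}, and for part (i) both directions go through Proposition \ref{contructureant} and the factorization $F_G(x)=\prod_t\gamma_t(x)$. The one minor variation is in how you recover the degrees $\chi_t(1)$ from the central-character values $\lambda_{it}$: the paper appeals to Theorem \ref{thm:degree} (the formula $\sum_\gamma \tfrac{1}{\ell}\lambda_{\gamma t}\tfrac{p_\gamma}{\ell_\gamma}=\tfrac{1}{\chi_t(1)^2}$), while you invoke the orthogonality relation (ii) of Theorem \ref{thm:orth} directly via $\chi_t(1)^2\sum_i\lambda_{it}\lambda_{i't}/\ell_i=\ell$; these are equivalent consequences of the same orthogonality, and your version has the small advantage of being more self-contained. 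Your explicit identification of the identity class ($\mathcal{A}_1=I$), the inverse involution ($a_{1,i,i'}\neq 0$), and the class sizes ($\ell_i=a_{1,i,i'}$) from the structure constants is something the paper leaves implicit but indeed needs.
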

\begin{proof}
By Proposition \ref{contructureant}, the matrices $\{\mathcal{A}_i\}_{i=1}^n$ can be calculated out by $F_G(x)$.
Then there exists an invertible matrix $\mathcal{U}$ such that
\[\mathcal{U}\mathcal{A}_i\mathcal{U}^{-1}=\begin{pmatrix}
\lambda_{i1}& *& *&\cdots &*\\
0& \lambda_{i2}& *&\cdots &*\\
\cdot& \cdot& \cdot&\cdots &\cdot\\
0& 0& 0&\cdots &\lambda_{in}\\
\end{pmatrix}\leqno{(*)}\] for $1\leq i \leq n$.

Since $k\mathcal{P}$ is semisimple, the following matrix $\mathcal{R}$ is invertible.
\[\mathcal{R}=\begin{pmatrix}
\lambda_{1,1}&\lambda_{2,1}&.&.&.&\lambda_{n,1}\\
\lambda_{1,2}&\lambda_{2,2}&.&.&.&\lambda_{n,2}\\
.&.&.&.&.&.\\
\lambda_{1,i}&\lambda_{2,i}&.&.&.&\lambda_{n,i}\\
.&.&.&.&.&. \\
\lambda_{1,n}&\lambda_{2,n}&.&.&.&\lambda_{n,n}
\end{pmatrix}.
\]
In the cass that $\mathcal{P}$ is trivial partition, it is shown that $e_t=f_t$ in \cite{F2}. Since $e_t=\chi_t(1)$, by  Theorem \ref{thm:degree}, the degrees $\chi_i^\mathcal{P}(1),1\leq i \leq n$ can be calculated out. Then $$T_G=\mbox{Diag}(\chi_1^\mathcal{P}(1),\cdots,\chi_n^\mathcal{P}(1))\mathcal{R}$$ is the character table defined by the partition algebra $k\mathcal{P}$. So  $T_G$ is determined by $F_G(x)$.


Given character table  \[T_G=
\begin{array}{c|rrrrrrrrrrrrrrrrrrrrrrrrrrrrrrr}
&C_1&C_2&.&C_j&.&C_n\\
\hline

\chi^\mathcal{P}_1&\chi^\mathcal{P}_{1,1}&\chi^\mathcal{P}_{1,2}&.&\chi^\mathcal{P}_{1,j}&.&\chi^\mathcal{P}_{1,n}\\
\chi^\mathcal{P}_2&\chi^\mathcal{P}_{2,1}&\chi^\mathcal{P}_{2,2}&.&\chi^\mathcal{P}_{2,j}&.&\chi^\mathcal{P}_{2,n}\\
.&.&.&.&.&.&.\\
\chi^\mathcal{P}_i&\chi^\mathcal{P}_{i,1}&\chi^\mathcal{P}_{i,2}&.&\chi^\mathcal{P}_{i,j}&.&\chi^\mathcal{P}_{i,n}\\
.&.&.&.&.&.&. \\
\chi^\mathcal{P}_n&\chi^\mathcal{P}_{n,1}&\chi^\mathcal{P}_{n,2}&.&\chi^\mathcal{P}_{n,j}&.&\chi^\mathcal{P}_{n,n}
\end{array}.\]
Define matrices $D_j=\mbox{Diag}(\frac{\ell_j\chi^\mathcal{P}_{1,j}}{\chi_1^\mathcal{P}(1)},\cdots,\frac{\ell_j\chi^\mathcal{P}_{n,j}}{\chi_1^\mathcal{P}(1)})$ for $1\leq i \leq n$. Then $F_G^\mathcal{P}(x)=|x_1D_1+\cdots +x_nD_n|$. So  $F_G(x)$ is determined by $T_G$.

\end{proof}

\begin{definition}
Let $G$ and $H$ be finite groups. We say that $F_G(x_1,\cdots,x_n)$ is equal to $F_H(x_1,\cdots,x_n)$ by a permutation if there exists a permutation $\sigma\in S_n$ such that $F_G(x_1,\cdots,x_n)=F_H(x_{1^\sigma},\cdots,x_{n^\sigma})$.
\end{definition}

Let $\{C_{i_1},\cdots,C_{i_r}\}$ be a subset of $cl(G)$. We call $\{C_{i_1},\cdots,C_{i_r}\}$ is closed if the set $\cup_{j=1}^r C_{i_j}$ is closed under the multiplication of $G$.

\begin{theorem}
The lattice of normal subgroups of $G$ can be calculated out by the Frobenius polynomial $F_G(x)$.
\end{theorem}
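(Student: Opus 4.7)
The plan is to reduce the problem to a question about the structure constants $a_{lij}$ of $Z(\mathbb{C}G)$ in the class-sum basis, and then characterize normal subgroups as certain subsets of the index set $\{1,\ldots,n\}$ that are readable off from those constants.

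First, by Proposition \ref{contructureant} applied to the trivial partition $\mathcal{P}$, the Frobenius polynomial $F_G(x_1,\ldots,x_n)$ and the family of matrices $\mathcal{A}_1,\ldots,\mathcal{A}_n$ determine each other. Since the $(l,i)$-entry of $\mathcal{A}_j$ is precisely $a_{lij}$, recovering $F_G(x)$ is equivalent to recovering the labeled family $\{a_{lij}\}_{1\le i,j,l\le n}$ together with the index set. Next, among these labels the identity class $C_1$ is distinguished as the unique index $i$ for which $\mathcal{A}_i$ is the identity matrix, equivalently for which $\hat{C}_i$ is the multiplicative identity of $Z(\mathbb{C}G)$.

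Now observe that any normal subgroup $N$ of $G$ is a union of conjugacy classes, so it corresponds bijectively to a subset $I_N\subseteq\{1,\ldots,n\}$ with $1\in I_N$. The condition that $N$ be closed under multiplication translates, via $\hat{C}_i\hat{C}_j=\sum_l a_{lij}\hat{C}_l$ and the nonnegativity of the $a_{lij}$, to
\[
a_{lij}=0\quad\text{whenever } i,j\in I_N \text{ and } l\notin I_N.
\]
Conversely, any subset $I\subseteq\{1,\ldots,n\}$ containing the identity index and satisfying this closure condition yields a multiplicatively closed union $N_I=\bigcup_{k\in I}C_k$ containing $1$; closure under inversion is then automatic since $N_I$ is finite, and $N_I$ is normal because it is a union of conjugacy classes. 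Hence the correspondence $N\leftrightarrow I_N$ is a bijection between the set of normal subgroups of $G$ and the set of subsets of $\{1,\ldots,n\}$ containing the identity index and satisfying the displayed closure condition.

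Combining these observations gives a purely combinatorial recipe: from $F_G(x)$ we extract the structure constants $a_{lij}$ and the identity index, enumerate the subsets $I\ni 1$ satisfying the closure condition, and order them by inclusion. This recovers the full lattice of normal subgroups, since meets and joins in the normal subgroup lattice correspond to intersection and union of the index sets. There is no serious obstacle here beyond keeping the labeling of variables and classes consistent between $F_G(x)$ and the recovered structure constants; the content is essentially a repackaging of Proposition \ref{contructureant} together with the elementary characterization of normal subgroups as multiplicatively closed unions of conjugacy classes containing the identity.
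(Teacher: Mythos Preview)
Your proposal is correct and follows essentially the same route as the paper: recover the structure constants $a_{lij}$ from $F_G(x)$ via Proposition~\ref{contructureant}, then identify normal subgroups with the index sets $I\ni 1$ satisfying the multiplicative closure condition $a_{lij}=0$ for $i,j\in I$, $l\notin I$. One small slip: joins in the normal subgroup lattice do \emph{not} correspond to unions of index sets (the product $N_1N_2$ generally contains conjugacy classes outside $N_1\cup N_2$), but this is harmless since the lattice structure is already determined by the inclusion order on the recovered family of index sets.
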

\begin{proof}
By Proposition \ref{contructureant}, $a_{ij\ell}(1\leq i,j,\ell \leq n)$ can be calculated out from $F_G(x)$. The subset of conjugacy classes $\{C_{i_1},\cdots,C_{i_r}\}$ is closed if and only if $a_{\alpha\beta\gamma}=0$ where only two of $\alpha,\beta,\gamma$ are in $\{i_1,\cdots,i_r\}$. So we can determine whether $\{C_{i_1},\cdots,C_{i_r}\}$ is closed by $F_G(X)$. A subgroup $N$ of $G$ is normal if and only if those conjugacy classes belonging to $N$ are closed. Hence all closed subsets of $Cl(G)$ can be calculated out by $F_G(x)$. The relation of inclusion among closed subset of $Cl(G)$ are obvious. So the lattice of normal subgroups of $G$ can be calculated out by $F_G(x)$.
\end{proof}

Let $\rho_t:Z(\mathbb{C}G)\rightarrow \mathbb{C}$ be one dimensional representation such that $\rho_t(\hat{C_i})=\lambda_{it}$ for $1\leq t\leq n$. Then  $\rho_1,\cdots,\rho_n$ are non-isomorphic representations of $Z(\mathbb{C}G)$ and they are direct summand of the regular represention $\mathfrak{L}$ of $Z(\mathbb{C}G)$. Furthermore, $\mathfrak{L}$ is euivalent to $\oplus_{i=1}^n \rho_i$.

Let $\mathcal{A}_j=\mathfrak{L}(\hat{C}_j)$. Then there is an invertible matrix $U$ such that $$U\mathcal{A}_jU^{-1}= Diag(\lambda_{j1},\cdots,\lambda_{jn}).$$ By Theorem \ref{Property:A}, the matrix $\mathfrak{A}=\sum_{i=1}^n x_i\mathcal{A}_i=(\alpha_{ij})$ has $n$ eigenvalues  :$\gamma_1(x),\cdots,\gamma_n(x)$ and $\gamma_t$ is the  eigenvectors of $\mathfrak{A}$ belonging to $\gamma_t(x)$. This means that $\gamma_t\gamma_t(x)=\gamma_t\mathfrak{A}$.  Let $x_i=\delta_{ij}$. Then $\mathfrak{A}=\mathcal{A}_j$, $\gamma_t(x)=\lambda_{jt}$ and $\lambda_{jt}\gamma_t=\gamma_t \mathcal{A}_j$ for $1\leq t \leq n$. This implies that $\gamma_1,\cdots,\gamma_n$ are common eigenvectors of $\mathcal{A}_1,\cdots, \mathcal{A}_n$. Similarly, by Theorem \ref{Othmatrix}, $\varsigma_1,\cdots,\varsigma_n$ are common eigenvectors of $\mathcal{A}_1',\cdots, \mathcal{A}_n'$. Define diagonal matrices $D_i=\mbox{Diag}(\lambda_{i 1},\lambda_{i 2},\cdots,\lambda_{i n})$ for $1\leq i \leq n$.
Then $R'\mathcal{A}_i(R')^{-1}=D_i$, where $R=(\gamma'_1,\cdots,\gamma'_n)$. So $p_{ij}=Tr(\mathcal{A}_i\mathcal{A}_j)=Tr(D_iD_j)=\sum_{t=1}^n \lambda_{it}\lambda_{jt}$. In particular, $R'\mathcal{A}_{i'}(R')^{-1}=D_{i'}=\mbox{Diag}(\overline{\lambda}_{i 1},\overline{\lambda}_{i 2},\cdots,\overline{\lambda}_{i n})$.
Thus we have the following result:
\begin{proposition} \label{prop:mp'}
The matrix $(p_{ij'})$ is determined by $F_G(x)$.
\end{proposition}

The closed subsets of conjugacy classes form a poset under inclusion.  Let $\mathscr{P}_c(G)$ be a set  consisting of $1$-sets and closed subsets of Cl$(G)$. Under the inclusion, $\mathscr{P}_c(G)$ is a poset. We label each $1$-set $\{C_\ell\}$ by $D_\ell$ and label each closed subset by its size. Then $\mathscr{P}_c(G)$ is a labelled poset and it has a class structure of JH-type(see \cite{KS}).
By \cite{KS}, we have the following result:

\begin{corollary}\label{cor:ks}
\ \

\begin{itemize}
  \item[$(i)$] The labelled poset $\mathscr{P}_c(G)$ is determined by $F_G(x)$.
 \item[$(ii)$] For finite groups $G$ and $H$, if $F_G(x)=F_H(x)$,
 then there is an isomorphism $\tau^c_{G,H}$ of labelled posets
 from $\mathscr{P}_c(G)$ to $\mathscr{P}_c(H)$. In addition,
 $\tau^c_{G,H}$ is a correspondence of JH-type.
\end{itemize}
\end{corollary}

\begin{corollary} Let $G$ and $H$ be finite groups. If $F_G(x)$ is equal to $F_H(x)$ by a permutation  then $G$ and $H$ have the same chief factors and the same multiplicity. In particular, $S$ is isomorphic to $T$ if and only if $F_S(x)$ is isomorphic to $F_T(x)$ and $S$ is determined by its $p_{ijk}$, $1\leq i,j,k\leq n$, in case that $S$ is a finite simple group.\end{corollary}



\begin{proof}
It follows from Theorem \ref{thm:TFPijk}, Corollary \ref{cor:ks} and  Theorem 4 in \cite{KS}.
\end{proof}
\begin{theorem}
Two finite groups $G,H$ have the same character table
if and only if there is an isomorphism $\phi$ of
algebras from $Z(\mathbb{C}G)$ to $Z(\mathbb{C}H)$
such that $\phi$ induces a bijection between Cl$(G)$ and Cl$(H)$.
\end{theorem}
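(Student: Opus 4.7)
The plan is to bridge the two sides via the structure constants $a_{lij}$ of the class-sum basis of the center, which the paper has already shown to be equivalent to both the Frobenius polynomial and the character table.

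For the direction $(\Rightarrow)$, I would first invoke Lemma~\ref{Thm:conjLength} to conclude that, once orderings of characters and classes are chosen to align the two character tables, the class sizes $\ell_i$ of $G$ and $H$ coincide. Then the classical Frobenius formula
\[
a_{lij}=\frac{\ell_i\ell_j}{|G|}\sum_{\chi\in\mathrm{Irr}(G)}\frac{\chi(g_i)\chi(g_j)\overline{\chi(g_l)}}{\chi(1)}
\]
shows $a_{lij}^G=a_{lij}^H$ for all $i,j,l$. Consequently the linear map $\phi:Z(\mathbb{C}G)\to Z(\mathbb{C}H)$ sending $\hat{C}_i^{\,G}\mapsto \hat{C}_i^{\,H}$ is an algebra isomorphism, and by construction it induces the matching bijection $\mathrm{Cl}(G)\to\mathrm{Cl}(H)$.

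For $(\Leftarrow)$, I would assume $\phi$ maps $\hat{C}_i^{\,G}$ to $\hat{C}_{\sigma(i)}^{\,H}$ for some bijection $\sigma:\mathrm{Cl}(G)\to\mathrm{Cl}(H)$. I would first collect three easy algebraic observations: (i) $\phi(1)=1$ together with $\hat{C}_1=1$ forces $\sigma(1)=1$; (ii) the class-inverse operation is intrinsic to the algebra, since $i'$ is the unique index with a nonzero coefficient of $\hat{C}_1$ in $\hat{C}_i\hat{C}_j$, hence $\sigma(i')=\sigma(i)'$; (iii) comparing the coefficient of $\hat{C}_1$ in $\hat{C}_i^{\,G}\hat{C}_{i'}^{\,G}$ with its image under $\phi$ yields $\ell_i^G=\ell_{\sigma(i)}^H$. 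These together give $a_{lij}^G=a_{\sigma(l)\sigma(i)\sigma(j)}^H$ for all triples.

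At this point I would finish by applying Theorem~\ref{thm:TFPijk}: since the Frobenius polynomial and the ordinary character table determine each other, and since the Frobenius polynomial of a group is encoded precisely by the structure constants of its class-sum basis, the coincidence of structure constants (through $\sigma$) forces equality of the character tables of $G$ and $H$ after reordering rows and columns by $\sigma$ and an appropriate permutation of characters. The main subtlety — and the only place one must be careful — is the correct reading of ``same character table'': it should mean the existence of orderings of the irreducible characters and of the conjugacy classes under which the two tables agree entrywise, so that Lemma~\ref{Thm:conjLength} and the Frobenius formula can be applied simultaneously to both groups. Once this is fixed, the rest is bookkeeping with the machinery already developed in Sections~\ref{sec:realization}--\ref{sec:degreepoly}.
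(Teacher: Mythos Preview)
Your proposal is correct and follows essentially the same line as the paper's own proof: both directions go through the structure constants $a_{lij}$ of the class-sum basis, using Lemma~\ref{Thm:conjLength} for $(\Rightarrow)$ and reading off $\sigma(1)=1$, $\sigma(i')=\sigma(i)'$, and $\ell_i^G=\ell_{\sigma(i)}^H$ from the algebra structure for $(\Leftarrow)$. The only cosmetic difference is that in $(\Leftarrow)$ you conclude via Theorem~\ref{thm:TFPijk} (structure constants $\Rightarrow$ Frobenius polynomial $\Rightarrow$ character table), whereas the paper goes directly from identical structure constants to identical definition equations of characters; both routes are equivalent and your explicit verification of the three observations (i)--(iii) is in fact cleaner than the paper's somewhat compressed argument.
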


\begin{proof} It is clear that the sizes of conjugacy classes of a finite group
are determined by its character table.
If $G$ and $H$ have the same character table,
then the character table defines a natural bijiection $\phi$ from $Cl(G)$ to $Cl(H)$.
In particular, $|\phi(C)|=|C|$ for $C\in Cl(G)$.
By the equations
\begin{equation}\label{equ:defchar2}
\ell_j\ell_l\chi_{j,t}\chi_{l,t}=\chi_{t,1}\sum_{i=1}^n \ell_{i'}a_{ijl}\chi_{i,t}
\end{equation}
for $1\leq i,j,l,t\leq n$, the construction constants
 $\{a_{ijl}\}_{1\leq i,j,l\leq n}$ of $Z(\mathbb{C}G)$ and $Z(\mathbb{C}H)$ are the same.
Extending $\phi$ to $Z(\mathbb{C}G)$ linearly, we
conclude that
$$\phi(\hat{C_i}\hat{C_j})=\phi(\sum_{\ell=1}^n
a_{\ell,i,j}\hat{C_\ell})=\sum_{\ell=1}^n a_{\ell,i,j}\phi(\hat{C_\ell})
=\phi(\hat{C_i})\phi(\hat{C_j}).$$ So $\phi$ is an isomorphism from
$Z(\mathbb{C}G)$ to $Z(\mathbb{C}H)$.

\vspace{1ex}

Conversely, let $Cl(G)=\{C_1,C_2,\ldots,C_n\}$ and
$Cl(H)=\{\mathfrak{C}_1,\ldots,\mathfrak{C}_n\}$.
Since the algebra isomorphism $\phi$ also induces a
bijection from  $Cl(G)$ to $Cl(H)$,
we can assume that $\phi(C_i)=\mathfrak{C}_i$ for $1\leq i \leq n$.
Then the two algebras $Z(\mathbb{C}G)$ and $Z(\mathbb{C}H)$
have the same construction constants $\{a_{ijl}\}_{1\leq i,j,l\leq n}$.
The coefficient $a_{1jl}\not=0$ if and only if $l=j'$ and $|C_j|=a_{1jl}=|\mathfrak{C}_j|$.
Hence those $\{\ell_{ijl}\}_{1\leq i,j,l\leq n}$
in Equation (\ref{basicEqs}) for $G$ and $H$ are same.
So $G$ and $H$ have the same defining equations of characters,
and thus they have the same character table.
\end{proof}

\section{On the degrees of $k$-characters}\label{sec:k-character}
In  this section we will apply Theorem \ref{Mainthm} to the degrees of  $k$-characters, where $k$ is a subfield of the complex field $\mathbb{C}$. If $k=\mathbb{C}$, the $k$-conjugate classes of $G$ are just the conjugate classes of $G$ and $k$-characters are the ordinary characters of $G$. In this section, some Mckay-type conjectures are reformulated and new problems are also put forward.

At first, we will focus on the case that $k=\mathbb{C}$.

  Let $R=(\gamma_1',\cdots,\gamma_n')$ and $S=(\varsigma_1',\cdots,\varsigma_n')$. Then we  have $R'A_jS=diag(\lambda_{j1},\cdots,\lambda_{jn})=S'A_j'R$ for $1\leq j\leq n$ and $RS'=I$.


Let $M$ be the permutation matrix such that $(p_{ij'})M=(p_{ij})$. Then $M^2=I$.
\begin{definition}
The polynomial $D_G(x)=|xI-Diag(\ell\ell_1,\cdots,\ell\ell_n)MSS'|$ is called degree polynomial of $G$ with respect to partition $\mathcal{P}$ of Cl($G$).

\end{definition}

\begin{theorem}\label{thm:degreeMainthm}
The polynomial $D(x)=|xI-Diag(\ell\ell_1,\cdots,\ell\ell_n)MSS'|$ has the following factorization:
$D(x)=(x-\chi_1^2(1))\cdots(x-\chi_n^2(1))$.

\end{theorem}
\begin{proof}
Since we have  $RR'\mbox{Diag}(\frac{1}{\ell\ell_1},\cdots,\frac{1}{\ell\ell_n})\mbox{Diag}(\ell\ell_1,\cdots,\ell\ell_n)SS'=1$ and $RR'=(p_{ij'})M$, it suffices to show that the characteristic polynomial of $(p_{ij'})\mbox{Diag}(\frac{1}{\ell\ell_1},\cdots,\frac{1}{\ell\ell_n})$ has the following factorization \[\mid xI-(p_{ij'})\mbox{Diag}(\frac{1}{\ell\ell_1},\cdots,\frac{1}{\ell\ell_n})\mid =(x-\frac{1}{\chi_1^2(1)})\cdots(x-\frac{1}{\chi_n^2(1)}).\]

Next to show $\mid xI-(\frac{p_{ij'}}{\ell\ell_{j'}}) \mid =(x-\frac{1}{\chi_1^2(1)})\cdots(x-\frac{1}{\chi_n^2(1)})$.

By equation (\ref{equ:pij}), we have \[\frac{p_{ij'}}{\ell\ell_{j'}}=\sum_{\gamma}\frac{\ell_{ij'\gamma}}{\ell\ell_j}\frac{p_\gamma}{\ell_\gamma}=\frac{1}{\ell}\sum_\gamma a_{ji\gamma}\frac{p_\gamma}{\ell_\gamma}.\] So $(\frac{p_{ij'}}{\ell\ell_{j'}})=\frac{1}{\ell}\sum_\gamma A'_\gamma\frac{p_\gamma}{\ell_\gamma}$. It follows from Theorem \ref{Othmatrix}  that\begin{eqnarray*}
 && \mid xI-(\frac{p_{ij'}}{\ell\ell_{j'}})\mid
 = \prod_t(x-\sum_{\gamma=1}^n \frac{1}{\ell}\zeta_{\gamma t}\frac{p_\gamma}{\ell_\gamma})\\
 &=& \prod_t(x-\sum_{\gamma=1}^n \frac{1}{\ell}\frac{\ell_\gamma\chi_{\gamma t}}{\chi_t(1)}\frac{p_\gamma}{\ell_\gamma})\\
 &=& \prod_t(x-\sum_{\gamma=1}^n \frac{1}{\ell}\frac{\ell_\gamma\chi_{\gamma t}\sum_j\chi_{\gamma'j}}{\chi_t(1)\chi_j(1)})\\
 &=& \prod_t(x-\sum_{j=1}^n \frac{1}{\ell}\frac{\sum_\gamma\ell_\gamma\chi_{\gamma t}\chi_{\gamma'j}}{\chi_t(1)\chi_j(1)})\\
 &=& \prod_t(x- \frac{1}{\chi_t^2(1)}).\end{eqnarray*}


\end{proof}

 Next we will define degree element of $G$ such that the degree polynomial is the characteristic polynomial of some element $(\mathfrak{d})\in Z(\mathbb{C}G)$ and ,  $\mathfrak{d}$ is the degree element of $G$.

Let $\mathscr{L}$ be the regular representation of $\mathbb{C}G$. Then a nonsingular symmetric bilinear form $(x,y)=\frac{tr(\mathscr{L}(xy))}{|G|}$ on $Z(\mathbb{C}G)$ is defined in \cite{Z}. By using this form, Zassenhaus defined Casimir element $c$ of $Z(\mathbb{C}G)$ and show that the characteristic polynomial of $\mathscr{L}(c)$ is $\prod_{\chi\in Irr(G)}(\lambda-\frac{|G|}{\chi^2(1)})^{\chi^2(1)}$.

Since the Casimir element is independent of the choice of basis, $\{e_\chi\}_{\chi\in Irr(G)}$ is used as a basis of $Z(\mathbb{C}G)$ to calculate the Casimir element  in \cite{KM}, where $e_\chi=\frac{\chi(1)}{|G|}\sum_{x\in G}\chi(x^{-1})x$ is the primitive idempotent of $Z(\mathbb{C}G)$ corresponding the irreducible character $\chi$. The Casimir element is written into the form:
\[c=\sum_{\chi\in Irr(G)}(\frac{|G|}{\chi(1)})^2e_\chi=\sum_{x,y\in G}xyx^{-1}y^{-1}=\sum_{x\in G}\theta(x)x,\] where $\theta(x)$ denotes the number of pairs $(a,b)\in G\times G$ such that $x=aba^{-1}b^{-1}$. Then there exists element $c^{-1}\in Z(\mathbb{C}G)$ such that $c^{-1}c=1$ and $c^{-1}= \sum_{\chi\in Irr(G)}(\frac{\chi(1)}{|G|})^2e_\chi$. Let $\mathfrak{d}=|G|^2c^{-1}=\sum_{\chi\in Irr(G)}\chi^2(1)e_\chi$. This element $\mathfrak{d}$ is called degree element of $G$. Then we have

\begin{theorem}

The characteristic polynomial of $\mathfrak{L}(\mathfrak{d})$ is the degree polynomial $D_G(x)$ of $G$.

\end{theorem}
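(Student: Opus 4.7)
The plan is to diagonalize $\mathfrak{L}(\mathfrak{d})$ using the central primitive idempotents, and then compare with the factorization already established in Theorem~\ref{thm:degreeMainthm}. Since that theorem gives
\[
D_G(x)=\prod_{i=1}^n\bigl(x-\chi_i(1)^2\bigr),
\]
the task reduces to showing that the characteristic polynomial of $\mathfrak{L}(\mathfrak{d})$ is this same product, i.e.\ that the eigenvalues of $\mathfrak{L}(\mathfrak{d})$ are precisely the squared degrees $\chi_1(1)^2,\ldots,\chi_n(1)^2$.

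First, I would work in the basis $\{e_{\chi_1},\ldots,e_{\chi_n}\}$ of $Z(\mathbb{C}G)$ consisting of central primitive idempotents indexed by $\mathrm{Irr}(G)$; this is a basis because $Z(\mathbb{C}G)$ is commutative semisimple with one-dimensional simple components. The orthogonality $e_{\chi_i}e_{\chi_j}=\delta_{ij}e_{\chi_i}$ and the given expansion
\[
\mathfrak{d}=\sum_{j=1}^n\chi_j(1)^2\,e_{\chi_j}
\]
together yield $\mathfrak{d}\cdot e_{\chi_i}=\chi_i(1)^2\,e_{\chi_i}$ for every $i$. Hence the matrix of left multiplication by $\mathfrak{d}$ in this basis is the diagonal matrix $\operatorname{Diag}(\chi_1(1)^2,\ldots,\chi_n(1)^2)$.

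Second, reading off the characteristic polynomial of this diagonal matrix gives $\prod_{i=1}^n(x-\chi_i(1)^2)$, which matches $D_G(x)$ by Theorem~\ref{thm:degreeMainthm}. One could alternatively argue more intrinsically: the regular representation of the commutative semisimple algebra $Z(\mathbb{C}G)$ decomposes as the direct sum of the one-dimensional representations $\rho_{\chi_i}$, and $\rho_{\chi_i}(\mathfrak{d})=\chi_i(1)^2$ by the defining expansion of $\mathfrak{d}$, so the eigenvalues of $\mathfrak{L}(\mathfrak{d})$ are exactly $\{\chi_i(1)^2\}_{i=1}^n$ with multiplicity one.

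There is no real mathematical obstacle; the only delicate point is notational: one must read $\mathfrak{L}$ as the regular representation of $Z(\mathbb{C}G)$ (an $n\times n$ matrix), not the regular representation of the full group algebra $\mathbb{C}G$ (a $|G|\times|G|$ matrix, in which each $\chi_i(1)^2$ would appear with multiplicity $\chi_i(1)^2$ as in Zassenhaus' formula recalled just above the statement). This interpretation is forced by the fact that the degree polynomial $D_G(x)$ is of degree $n$, matching the dimension of $Z(\mathbb{C}G)$, and is consistent with the use of $\mathfrak{L}$ for the regular representation of an algebra of dimension $n$ throughout Section~\ref{sec:FrobPoly}.
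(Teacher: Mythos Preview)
Your proposal is correct and essentially the same as the paper's proof. The paper argues that $\mathfrak{L}\simeq\rho_1\oplus\cdots\oplus\rho_n$ and $\rho_{\chi}(e_{\chi'})=\delta_{\chi\chi'}$, whence $\rho(\mathfrak{d})=\operatorname{Diag}(\chi_1(1)^2,\ldots,\chi_n(1)^2)$; this is precisely your ``alternative'' argument, and your primary argument via the idempotent basis is just the same computation written on the module side rather than the representation side. Your explicit invocation of Theorem~\ref{thm:degreeMainthm} to identify $\prod_i(x-\chi_i(1)^2)$ with $D_G(x)$ makes the logical dependence clearer than the paper's terse ``and the proof is done.''
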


\begin{proof}
Since $\mathfrak{L}$ is isomorphic to $\rho=\rho_1\oplus\cdots \oplus \rho_n$, it suffice to calculate $\rho(\mathfrak{d})$. By definition, $\rho_\chi(e_{\chi'})=\delta_{\chi\chi'}$. So
$\rho(\mathfrak{d})=Diag(\chi_1^2(1),\cdots,\chi_r^2(1))$ and the proof is done.
\end{proof}

Let $M_i(G)$ be the number of irreducible characters of $G$ with degree congruent to $\pm i$ modulo $p$, where $p$ is a prime and $i$ is an integer not divisible by $p$.  In \cite{IN}, I.M.Isaacs and G.Navarro refined McKay conjecture into the following form:

{\bf Conjecture.}(Isaacs and Navarro 2002)
If $G$ is a finite groups with $N=N_G(P)$ for some $P\in\mbox{ Sylow}_p(G)$, then $M_i(G)=M_i(N)$ for arbitrary integer $i$ not divisible by $p$.

 Let $\overline{i}$ denote the image of an integer $i$ in $\mathbb{Z}_p$. Let $D_G(x)=x^\ell-a_{\ell-1}x^{\ell-1}+\cdots+a_1x+a_0$. Then $\overline{D_G(x)}=x^\ell-\overline{a_{\ell-1}}x^{\ell-1}+\cdots+\overline{a_1}x+\overline{a_0}= x^sD^{p'}_G(x)$, where $(x,D^{p'}_G(x))=1$. We call $D^{p'}_G(x)$ the $p'$-degree polynomial of $G$ with respect to $p$. Since $D_G(x)=\prod_{\chi_i\in Irr(G)} (x-\chi_i^2(1))$, we have $D^{p'}_G(x)=\prod_{\substack{ p\nmid \chi_i(1)\\\chi_i\in Irr(G)}}(x-\overline{\chi_i^2(1)})$. So each irreducible character of $G$ with degree congruent to $\pm i$ modulo $p$ contributes $1$ to the multiplicity of $x-\overline{i}^2$ in $D^{p'}_G(x)$. Since $\pm \overline{i}$ are the only roots of $x^2-\overline{i}=0$, the multiplicity of $x-\overline{i}^2$ in $D^{p'}_G(x)$ is $M_i(G)$. Hence the refined McKay conjecture can be reformulated into the form:

{\bf Conjecture.}
Let $G$ be a finite group. Then $D^{p'}_G(x)=D^{p'}_N(x)$, where $N=N_G(P)$ and $P\in\mbox{ Sylow}_p(G)$.

\begin{remark}
The polynomials $D^{p'}_G(x)$ and $D^{p'}_N(x)$ have been calculated out to be equal for the groups: $M_{11},M_{12},M_{22},M_{23},M_{24},Sz(8),Sz(32)$ with each prime divisor of its order in \cite{SL}.
\end{remark}

In the rest, we will consider the case that $k$ is not a splitting field for $G$ and its characteristic is zero.
We will formulate a new conjecture and show its connection with conjecture A in \cite{N}.
 Let $K=k(\xi)$, where $\xi$ is primitive $|G|$-th roots of unity. So $K$ is a splitting field of $G$ and its subgroups. Let $\mathscr{G}=$Gal($K/k$) be the Galois group.  For any $\sigma\in \mbox{Gal}(K/k)$, there exists a unique $t \in \mathbb{Z}/m\mathbb{Z}$ such that $\sigma(\omega)=\omega^t$ for $|G|$-th root $\omega$ of unity. In this way, we define an isomorphism $\tau$ from $\mbox{Gal}(K/k)$ to $(\mathbb{Z}/|G|\mathbb{Z})^*$. Let $\Gamma=\{\tau(\sigma)|\sigma\in \mbox{Gal}(K/k)\}$. We denote by $\sigma_t$ the inverse image of $t\in \Gamma$. We can define $k$-classes as in section \ref{sec:Frobenius theory}. The $k$-classes is a good partition.

Let  $$\Theta_G(x)=|M_G(x)|=\prod_i^r\Phi_i^{e_i}(x)$$ be the factorization of irreducible factors over $K$. Let $\chi_i$ be the character defined by $\Phi_i(x)$.
 Then $\mathscr{G}$ acts on $\Phi_1(x),\cdots,\Phi_n(x)$. Let $\Omega_1,\cdots,\Omega_r$ be the orbits of the action of $\mathscr{G}$. Let $o_i$ be the length of $\Omega_i$. If $\Phi_j(x)$ and $\Phi_l(x)$ belong to the same orbit, then they have the same degree and the same multiplicity i.e. $f_j=\chi_j(1)=\chi_l(1)=f_i=e_j=e_l$ by section 9 in \cite{F2}. Let $\Psi_i=\prod_{\Phi(x)\in \Omega_i}\Phi(x)$.  Then $\Theta_G(x)=\prod_i^r\Psi_i(x)^{e_i}$ is the irreducible factorization of $\Theta_G(x)$ over $k$, where $e_i$ is equal to the multiplicity of $\Phi(x)\Omega_i$ in $\Theta_G(x)$. Assume that $\Omega_j=\{\Phi_{j_1}(x),\cdots,\Phi_{j_{o_j}}(x)\}$. Then $\Psi_j(x)=\Phi_{j_1}(x)\cdots\Phi_{j_{o_j}}(x)$ and the degree of $\Psi_j(x)$ is $f_{j_1}o_j$. Define a function $\chi^k_i$ such that $\chi^k_i(g)$ is the coefficient of $x_E^{f_{j_1}o_j-1}x_g$ in $\Psi_j(x)$ and $\chi_i^k(1)=f_{j_1}o_j$.

 \begin{lemma}
 If $x,y$ belong to the same $k$-conjugate class, then $\chi_j^k(x)=\chi_j^k(y)$ and $\chi_j^k$ is a function over $k$.
 \end{lemma}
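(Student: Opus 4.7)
The plan is to first identify $\chi_j^k$ explicitly as the sum $\sum_{l=1}^{o_j}\chi_{j_l}$ of the $K$-valued characters belonging to the Galois orbit $\Omega_j$, and then to exploit the compatibility $\chi(g^t)=\chi^{\sigma_t}(g)$ together with the $\mathscr{G}$-invariance of the sum.

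For the identification, since all $\Phi_{j_l}\in\Omega_j$ share the common degree $f=f_{j_1}$, the product $\Psi_j=\prod_{l=1}^{o_j}\Phi_{j_l}$ is homogeneous of degree $o_j f$. Each $\Phi_{j_l}(x)$ has the form $x_E^{f}+\sum_{g\neq E}\chi_{j_l}(g)\,x_E^{f-1}x_g+R_l$, where $R_l$ contains only monomials in which $x_E$ appears to degree at most $f-2$. Expanding the product, the only way to produce the monomial $x_E^{o_j f-1}x_g$ is to select the term $x_E^{f-1}x_g$ from exactly one factor and $x_E^{f}$ from the remaining $o_j-1$ factors; any contribution from some $R_l$ would carry $x_E$ to total degree at most $o_j f-2$. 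Hence I read off $\chi_j^k(g)=\sum_{l=1}^{o_j}\chi_{j_l}(g)$.

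For the two claims I would use two standard identities. First, $\mathscr{G}$ acts on polynomials over $K$ by acting on coefficients, so it permutes the factors $\Phi_i$; since $\Omega_j$ is a single $\mathscr{G}$-orbit by construction, the induced action $\chi_{j_l}\mapsto\chi_{j_l}^\sigma$ (where $\chi^\sigma(g)=\sigma(\chi(g))$) merely permutes $\{\chi_{j_1},\dots,\chi_{j_{o_j}}\}$ for every $\sigma\in\mathscr{G}$. Second, from the standard eigenvalue description of an ordinary character on torsion elements one has $\chi(g^t)=\sigma_t(\chi(g))=\chi^{\sigma_t}(g)$ for every $t\in\Gamma$. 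Combining the step-1 identity with these two facts: if $x$ is $G$-conjugate to $y^t$, then
\[\chi_j^k(x)=\sum_{l}\chi_{j_l}(y^t)=\sum_{l}\chi_{j_l}^{\sigma_t}(y)=\sum_{l}\chi_{j_l}(y)=\chi_j^k(y),\]
proving constancy on $F$-classes; and for any $\sigma\in\mathscr{G}$,
\[\sigma(\chi_j^k(g))=\sum_{l}\sigma(\chi_{j_l}(g))=\sum_{l}\chi_{j_l}^{\sigma}(g)=\chi_j^k(g),\]
so $\chi_j^k(g)\in K^{\mathscr{G}}=F$ by Galois theory.

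The only real obstacle is the combinatorial identification $\chi_j^k=\sum_{l}\chi_{j_l}$ at the polynomial level; after that, both assertions are immediate consequences of the fact that a sum over a Galois orbit is both Galois-fixed and insensitive to the order-$t$ twist induced by any $\sigma_t\in\mathscr{G}$.
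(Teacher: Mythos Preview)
Your proof is correct and follows essentially the same route as the paper: identify $\chi_j^k$ with the orbit sum $\sum_l\chi_{j_l}$, use the eigenvalue identity $\chi(g^t)=\chi^{\sigma_t}(g)$, and then invoke that $\mathscr{G}$ permutes the orbit. The paper phrases the permutation step via a transversal $T$ of the stabilizer of $\chi_{j_1}$ rather than working directly with the set $\{\chi_{j_l}\}$, but this is cosmetic. Your argument is in fact more complete in two respects: you supply the polynomial-level justification for $\chi_j^k=\sum_l\chi_{j_l}$ (which the paper simply asserts), and you spell out the $F$-valuedness claim explicitly, whereas the paper only records the Galois invariance $(\chi_j^k)^g=\chi_j^k$ in passing.
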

 \begin{proof}
 Let $\chi_i$ be the character defined by $\Phi_i(x)$.  Let $T$ be a transversal of the stabilizer of $\chi_{j_1}$ in $\mbox{Gal}(K/k)$.  Then $\chi_j^k=\sum_{j_i=j_1}^{j_{o_j}}\chi_{j_i}=\sum_{\sigma\in T} \chi_{j_1}^\sigma$. So $\chi_j^k=(\chi_j^k)^g$ for any $g\in \mbox{Gal}(K/k)$. If $x$ is $k$-conjugate to $y$ then $y$ is conjugate to $x^q$ for some $q\in \Gamma$. It suffices to show that $\chi_j^k(x)=\chi_i^k(x^q)$. By Section 12 in \cite{F2}, $\chi_{j_1}(x)=\rho_1+\cdots +\rho_r$ and $\chi_{j_1}(x^q)=\rho_1^q+\cdots +\rho_r^q$, where $\rho_1,\cdots ,\rho_r$ are $\circ(x)$-th roots of unity. This means that $\chi_{j_1}(x^q)=\rho_1^{\sigma_q}+\cdots +\rho_r^{\sigma_q}=\chi_{j_1}^{\sigma_q}(x)$. So $\chi_j^k(x^q)=\sum_{\sigma\in T} \chi_{j_1}^{\sigma}(x^q)=\sum_{\sigma\in T} (\chi_{j_1}(x^q))^{\sigma}=\sum_{\sigma\in T} (\chi_{j_1}^{\sigma_q}(x))^{\sigma}=\sum_{\sigma\in T} (\chi_{j_1}^{\sigma_q\sigma}(x))$. Since $\{\sigma_q\sigma|\sigma\in T\}$ is another transversal of the stabilizer of $\chi_{j_j}$ in $\mbox{Gal}(K/k)$, we have $\chi_j^k(x)=\sum_{\sigma\in T} \chi_{j_1}^\sigma(x)=\sum_{\sigma\in T} \chi_{j_1}^{\sigma_q\sigma}(x)=\chi_j^k(x^q)$.
 \end{proof}

Let $\mathcal{P}$ be the $k$-conjugate classes. Let $\mathcal{P}_1,\cdots,\mathcal{P}_n$ be all $k$-classes with $g_j\in \mathcal{P}_j$ as representative.

\begin{lemma}\label{lem:kvaluechip}
Let $\chi^\mathcal{P}$ be the character defined by $\mathcal{P}$ and an irreducible factor $\Phi(x)$ of $\Theta_G(x)$. Then $\chi^\mathcal{P}(g)\in k$ for $g\in G$.
\end{lemma}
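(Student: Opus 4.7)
The plan is to show directly that the sum $\chi^\mathcal{P}(g_j) = \sum_{y\in\mathcal{P}_j} \chi(y)$ is fixed by every element of $\mathrm{Gal}(K/k)$, hence lies in $k$. Write the $F$-class $\mathcal{P}_j$ as a disjoint union $\mathscr{C}_{j_1}\sqcup\cdots\sqcup\mathscr{C}_{j_t}$ of ordinary conjugate classes. By the definition of $F$-conjugacy I can fix a representative $y_1\in\mathscr{C}_{j_1}$ and choose $q_s\in\Gamma$ so that $y_1^{q_s}$ is a representative of $\mathscr{C}_{j_s}$. A short separate check shows that the classes in a single Galois orbit all have the same size: the centralizer of $y_1^{q_s}$ inside $G$ is contained in the centralizer of $y_1$ (and vice-versa by applying the inverse power), so $|\mathscr{C}_{j_1}|=\cdots=|\mathscr{C}_{j_t}|=:h$.

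Next I would use the classical identity $\chi(y_1^{q_s})=\sigma_{q_s}(\chi(y_1))$, which follows by diagonalizing the action of $y_1$ in a $K$-representation affording $\chi$: if $\chi(y_1)=\sum_i\omega_i$ with each $\omega_i$ a root of unity, then $\chi(y_1^{q_s})=\sum_i\omega_i^{q_s}=\sigma_{q_s}\!\bigl(\sum_i\omega_i\bigr)$. Combining these gives
\[
\chi^\mathcal{P}(g_j) \;=\; h\sum_{s=1}^{t}\chi(y_1^{q_s}) \;=\; h\sum_{s=1}^{t}\sigma_{q_s}(\chi(y_1)).
\]

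The crucial step is to check that $\{\sigma_{q_s}\}_{s=1}^{t}$ is a transversal for the stabilizer $H=\mathrm{Stab}_{\mathrm{Gal}(K/k)}(\mathscr{C}_{j_1})$, and that $H$ fixes $\chi(y_1)$. Indeed, $\eta\in H$ means $y_1^{\tau(\eta)}$ is $G$-conjugate to $y_1$, hence $\eta(\chi(y_1))=\chi(y_1^{\tau(\eta)})=\chi(y_1)$ because $\chi$ is a class function. Therefore, for any $\tau\in\mathrm{Gal}(K/k)$, writing $\tau\sigma_{q_s}=\sigma_{q_{s'}}\eta_s$ with $\eta_s\in H$ gives $\tau\sigma_{q_s}(\chi(y_1))=\sigma_{q_{s'}}(\chi(y_1))$, so $\tau$ merely permutes the $t$ summands. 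This forces $\chi^\mathcal{P}(g_j)$ to be Galois-invariant, and hence in $k$. The cases $g=1$ and $g\notin\mathcal{P}_j$ for the chosen $j$ are immediate, since $\chi^\mathcal{P}(1)=f\in\mathbb Z\subset k$.

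The main obstacle I foresee is purely notational: keeping the two parallel actions of $\mathrm{Gal}(K/k)$ straight (on group elements by $g\mapsto g^t$, and on character values by field automorphisms), since the paper uses slightly non-standard conventions for these. A clean alternative worth recording, should the first route prove cumbersome, is the following conceptual argument. The value $\chi^\mathcal{P}(g_j)/\chi(1)$ equals the central character $\omega_\chi(\hat{C}_j)$, where $\hat{C}_j$ is the $F$-class sum and therefore lies in $k\mathcal{P}\subseteq Z(kG)$. By Theorem~\ref{thm:semisimPart} together with the analysis preceding Corollary~\ref{lem:kclass}, the algebra $k\mathcal{P}$ is a commutative semisimple $k$-algebra split by the idempotents $\varepsilon_1,\dots,\varepsilon_n$ with each $k\varepsilon_i\cong k$; consequently every $k$-algebra homomorphism $k\mathcal{P}\to K$ takes values in $k$. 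Applying this to the restriction of $\omega_\chi$ gives $\omega_\chi(\hat{C}_j)\in k$, and multiplication by the integer $\chi(1)$ yields the claim.
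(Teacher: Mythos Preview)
Your main argument is correct and follows essentially the same route as the paper: both write $\chi^\mathcal{P}(g_j)$ as the common class length times $\sum_s \chi(g^{t_s})$, invoke the identity $\sigma(\chi(g))=\chi(g^{\tau(\sigma)})$, and observe that applying any $\sigma\in\mathrm{Gal}(K/k)$ just permutes the representatives $g^{t_s}$ among themselves. Your version is slightly more careful in explicitly verifying that the conjugacy classes inside a single $F$-class share the same length and in phrasing the permutation step via cosets of the stabilizer $H$, whereas the paper simply asserts that $g^{t_1q},\dots,g^{t_sq}$ are again representatives; the substance is identical.

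Your alternative argument, however, is genuinely different from the paper's and worth highlighting. The paper never uses the structural fact (established just before Corollary~\ref{lem:kclass}) that $k\mathcal{P}=\bigoplus_i k\varepsilon_i\cong k^n$ as a $k$-algebra; you exploit this to conclude that the restriction of the central character $\omega_\chi$ to $k\mathcal{P}$ is a $k$-algebra map into $k$, whence $\chi^\mathcal{P}(g_j)=\chi(1)\,\omega_\chi(\hat C_j)\in k$ immediately. This bypasses the explicit Galois computation entirely and makes transparent why the specific partition by $F$-classes is the right one: it is precisely the partition for which the class sums already lie in a split commutative $k$-algebra. The paper's direct approach, on the other hand, is self-contained at the level of character values and does not rely on the idempotent decomposition, which may be preferable if one wants the lemma to stand independently of Section~\ref{sec:Frobenius theory}.
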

\begin{proof}
Assume that $g$ belongs $F$-class $\mathcal{P}_j$ and $\mathscr{C}_{j_1},\cdots,\mathscr{C}_{js}$ are conjugate classes contained in $\mathcal{P}_j$. Then we can choose representatives of $\mathscr{C}_{j_1},\cdots,\mathscr{C}_{js}$ in $\{g^t|t\in \Gamma\}$. Let $g^{t_i}$ be the representative of $\mathscr{C}_{j_i}$.Then $\chi^\mathcal{P}(g)=l(\sum_{i=1}^s\chi(g^{t_i})$, where $l$ is the length of  $\mathscr{C}_{j_1}$ and $\chi$ is a character defined by $\Phi(x)$. Let $\sigma\in \mathscr{G}$ and $q$ is the element in $\Gamma$ corresponding to $\sigma$.  Then there exist some $o(g)$-th root of unity $\rho_1,\cdots ,\rho_r$ such that $\chi(g)=\rho_1+\cdots +\rho_r$ and  $\chi^\sigma(g)=\rho_1^q+\cdots +\rho_r^q=\chi(g^q)$ by section 12 in \cite{F2}. Since any number in $\Gamma$ is prime to the order of $g$, $g^{t_1q},\cdots g^{t_sq}$ are still representatives of $\mathscr{C}_{j_1},\cdots,\mathscr{C}_{js}$. So $(\chi^\mathcal{P}(g))^\sigma=l(\sum_{i=1}^s\chi^\sigma(g^{t_i})=
l(\sum_{i=1}^s\chi(g^{t_iq})=\chi^\mathcal{P}(g)$. So $\chi^\mathcal{P}(g)\in k$ for $g\in G$.
\end{proof}

\begin{lemma}\label{lem:k-eigenvalue}
Set $x_A=x_B$ in the matrix $M_G(x)$ whenever $A$ and $B$ belong to the same $k$-class. Then $M_G(x)$ is similar to a diagonal matrix with linear polynomials of $k[x]$ in the diagonal.
\end{lemma}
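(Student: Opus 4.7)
The plan is to apply the semisimple-algebra machinery from Section \ref{sec:reppartalg} to the good partition $\mathcal{P}$ consisting of the $F$-classes of $G$, and then observe that rationality of the relevant character values forces the simultaneous diagonalisation to be defined over $F$ rather than only over the splitting field $K$. Under the substitution $x_A=x_B$ whenever $A$ and $B$ lie in the same $F$-class, Lemma \ref{lem:samesym} already writes $M_G(x)=\sum_{i=1}^n x_{g_i}M_i$, where the $M_i$ are the $h\times h$ $0/1$-matrices for which $\mathfrak{r}:k\mathcal{P}\to M_h(F)$, $\hat{C}_i\mapsto M_i$, is a faithful representation of the partition algebra. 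In particular the $M_i$ pairwise commute and have all entries in $F$.

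By Theorem \ref{thm:semisimPart}, $k\mathcal{P}$ is semisimple over $F$, so $\mathfrak{r}$ is a semisimple representation and each $M_i$ is diagonalisable over an algebraic closure, its eigenvalues being the scalars $\lambda_{it}$ coming from the one-dimensional representations $\mathfrak{r}_t(\hat{C}_i)=\lambda_{it}$. From Section \ref{sec:realization} we have the identity $\chi_t^\mathcal{P}(g_i)=\chi_t(1)\lambda_{it}$, where $\chi_t(1)=f_t$ is a positive integer. The key step is to invoke Lemma \ref{lem:kvaluechip} to conclude that $\chi_t^\mathcal{P}(g_i)\in F$, which forces every eigenvalue $\lambda_{it}$ into $F$. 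Combined with the absence of non-zero nilpotents in $k\mathcal{P}$ (Theorem \ref{thm:crsemi}), this makes the minimal polynomial of each $M_i$ a product of distinct linear factors over $F$, so each $M_i$ is diagonalisable over $F$ itself.

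A commuting family of matrices each of which is diagonalisable over $F$ is simultaneously diagonalisable over $F$. Choosing $U\in GL_h(F)$ that realises this simultaneously for $M_1,\ldots,M_n$, one obtains
\[
U M_G(x) U^{-1}=\sum_{i=1}^n x_{g_i}\,(U M_i U^{-1})=\mathrm{Diag}\bigl(\gamma_1(x),\ldots,\gamma_h(x)\bigr),
\]
whose diagonal entries are (repetitions of) the linear polynomials $\gamma_t(x)=\sum_{i=1}^n \lambda_{it}x_{g_i}\in F[x]$.

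The main obstacle is the descent from $K$ to $F$: a priori, the simultaneous diagonalisation guaranteed by semisimplicity only lives over the splitting field $K$. This is exactly what Lemma \ref{lem:kvaluechip} is designed to overcome, since the Galois-invariance of the $F$-class sums translates directly into Galois-invariance of the eigenvalues $\lambda_{it}$ and hence rationality. Everything else — the existence of the faithful representation, the diagonalisability, and the shape of the resulting diagonal — follows formally from results already proved in Sections \ref{sec:reppartalg} and \ref{sec:realization}.
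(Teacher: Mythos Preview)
Your proof is correct and follows essentially the same route as the paper: both set up the faithful representation $\mathfrak{r}\colon k\mathcal{P}\to M_h(k)$, $\hat C_i\mapsto M_i$, invoke the semisimplicity of $k\mathcal{P}$ (Theorem~\ref{thm:semisimPart}) to get diagonalisability, and use Lemma~\ref{lem:kvaluechip} as the rationality input guaranteeing that the diagonal entries lie in $k[x]$. The only cosmetic difference is that the paper reads off the eigenvalues by factoring the determinant $\Theta_G(x)=\prod_i(\frac{1}{f_i}\sum_j\chi_i^{\mathcal P}(g_j)x_{g_j})^{f_ie_i}$ via Theorem~\ref{thm:abbaExp}, whereas you argue directly that $\lambda_{it}=\chi_t^{\mathcal P}(g_i)/\chi_t(1)\in F$ and then appeal to the standard fact that a commuting family of matrices individually diagonalisable over $F$ is simultaneously diagonalisable over $F$; your version makes the descent to $F$ slightly more explicit, but the substance is the same.
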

\begin{proof}
Define matrix $M_i$ to be $M_G(x)$ by taking $x_g=1$ whenever $g\in \mathcal{P}_i$ and $x_g=0$ otherwise. Then the map $$\mathfrak{r}:k\mathcal{P}\rightarrow M_\ell(k)(\hat{C}_i\mapsto M_i)$$ is a faithful representation of the partition algebra $k\mathcal{P}$.
Under the assumption that $x_A=x_B$ in the matrix $M_G(x)$ whenever $A$ and $B$ belong to the same $F$-class,  $M_G(x)=\sum_{i=1}^n x_{g_i}M_i$.
Since $k\mathcal{P}$ is semisimple,  $M_G(x)$ is similar to a diagonal matrix.

Next it suffices to show $|M_G(x)|=\Theta_G(x)$ is a product of linear polynomials of $k[x]$  under the assumption that $x_A=x_B$ in the matrix $M_G(x)$ whenever $A$ and $B$ belong to the same $k$-class.  By Theorem \ref{thm:abbaExp}, $\Theta_G(x)=|M_G(x)|=\prod_i^r\Phi_i^{e_i}(x)=\prod_i^r(\sum_{j=1}^n \chi_i^\mathcal{P}(g_j)x_{g_j})$. By Lemma \ref{lem:kvaluechip}, $\chi_i^\mathcal{P}$ is a function over $k$. So $\chi_i^\mathcal{P}(g_j)x_{g_j}$ is a linear polynomial over $k$.
\end{proof}

\begin{theorem}\label{thm:k-abbaext}
Let $\Omega_j=\{\Phi_{j_1}(x),\cdots,\Phi_{j_{o_j}}(x)\}$ and $\Psi_j(x)=\prod_{\Phi_{j_i}(x)\in \Omega} \Phi_{j_i}(x)$. If  we set $x_A=x_B$ in the $\Psi_j(x)$ whenever $A$ and $B$ belong to the same $k$-class, then
\begin{itemize}
\item[(i)]{$\Psi_j(x)=(\frac{1}{o_jf}\sum_{i=1}^n\chi_j^k(g_i)x_{g_i})^{o_jf}$, where $f$ is the degree of one polynomial in $\Omega_j$;}
\item[(ii)]{$\chi^\mathcal{P}_{j_m}=\chi^\mathcal{P}_{j_n}$ for $1\leq m,n \leq o_j$;}
\item[(iii)]{$o_j\chi^\mathcal{P}_{j_m}=\chi^k_j$ for $1\leq m \leq o_j$.}
\end{itemize}
\end{theorem}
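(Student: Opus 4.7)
The plan is to establish (ii) first via a direct Galois-theoretic identification, derive (iii) as an immediate summation, and then obtain (i) by multiplying the formulas provided by Theorem~\ref{thm:abbaExp}.

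For (ii), I would fix $m,n$ and pick $\sigma \in \mathscr{G}$ with $\chi_{j_n}=\chi_{j_m}^\sigma$, corresponding to $t \in \Gamma$. Since every character value is a sum of $\exp(G)$-th roots of unity and $\sigma$ acts on each such root by raising it to the $t$-th power, one has $\chi_{j_n}(y)=\sigma(\chi_{j_m}(y))=\chi_{j_m}(y^t)$ for every $y \in G$. The $F$-class $\mathcal{P}_l$ is by definition closed under each operation $y \mapsto y^s$ with $s \in \Gamma$, and because $\sigma$ is an automorphism, $t$ is a unit modulo $\exp(G)$; hence $y \mapsto y^t$ is a bijection of $\mathcal{P}_l$ onto itself. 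Reindexing the defining sum yields $\chi_{j_n}^{\mathcal{P}}(g_l)=\sum_{y\in\mathcal{P}_l}\chi_{j_n}(y)=\sum_{y\in\mathcal{P}_l}\chi_{j_m}(y^t)=\sum_{y'\in\mathcal{P}_l}\chi_{j_m}(y')=\chi_{j_m}^{\mathcal{P}}(g_l)$ for $g_l\neq 1$, while at $g_1=E$ both sides equal $f$.

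For (iii), sum the identity from (ii) over $m=1,\ldots,o_j$ and interchange the order of summation to get $o_j\chi_{j_m}^{\mathcal{P}}(g_l)=\sum_{m=1}^{o_j}\sum_{y\in\mathcal{P}_l}\chi_{j_m}(y)=\sum_{y\in\mathcal{P}_l}\chi_j^k(y)$, which is exactly $\chi_j^k$ interpreted in the same partition-class-sum sense that underlies the definition of $\chi^{\mathcal{P}}$. Then for (i), Theorem~\ref{thm:abbaExp} applies to each $\Phi_{j_i}$ under the $F$-class substitution (which is stronger than $x_{AB}=x_{BA}$) and gives $\Phi_{j_i}(x)=(\tfrac{1}{f}\sum_l\chi_{j_i}^{\mathcal{P}}(g_l)x_{g_l})^f$. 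Multiplying over $i=1,\ldots,o_j$, replacing every $\chi_{j_i}^{\mathcal{P}}$ by the common value supplied by (ii), and writing that common value as $\chi_j^k/o_j$ via (iii), I obtain $\Psi_j(x)=(\tfrac{1}{f}\sum_l\tfrac{\chi_j^k(g_l)}{o_j}x_{g_l})^{o_jf}=(\tfrac{1}{o_jf}\sum_l\chi_j^k(g_l)x_{g_l})^{o_jf}$.

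The only step that demands real care is the bijection argument underlying (ii); once $y\mapsto y^t$ is seen to permute each $F$-class — which is immediate from the definition of $F$-conjugacy together with the invertibility of $t$ modulo $\exp(G)$ — the remainder of the proof is bookkeeping through Theorem~\ref{thm:abbaExp}.
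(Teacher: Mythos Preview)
Your argument is correct, and it reverses the logical order of the paper's proof. The paper first establishes (i) directly: it invokes Lemma~\ref{lem:k-eigenvalue} to see that, after the $F$-class substitution, all eigenvalues of $M_G(x)$ are linear forms over $k$, and then reruns the argument of Theorem~\ref{thm:abbaExp} with $\Psi_j$ playing the role of an irreducible factor over $k$, obtaining the closed form for $\Psi_j$ immediately. Only afterwards does it apply Theorem~\ref{thm:abbaExp} to each $K$-factor $\Phi_{j_m}$, compare the resulting linear forms with the single linear form in (i), and read off (ii) and (iii) from that comparison.

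Your route is more elementary in that it never needs the ``analogue of Theorem~\ref{thm:abbaExp} over $k$'': you extract (ii) straight from the Galois permutation of $F$-classes (essentially the same mechanism that drives Lemmas~\ref{lem:kvaluechip} and~\ref{lem:k-eigenvalue}), get (iii) by summing, and then assemble (i) from the already-available $K$-level instances of Theorem~\ref{thm:abbaExp}. The paper's order has the conceptual advantage of treating $\Psi_j$ as an irreducible factor in its own right over $k$, so (i) is a one-line application of the general machinery; your order avoids the hand-wave ``by the similar proof'' and makes the dependence on Galois theory explicit. One small point worth stating plainly: your identification in (iii) of $\sum_{y\in\mathcal{P}_l}\chi_j^k(y)$ with $\chi_j^k(g_l)$ requires reading $\chi_j^k$ in the same partition-sum convention as $\chi^{\mathcal P}$ (equivalently, using that $\chi_j^k$ is constant on $F$-classes by Lemma~\ref{lem:kvaluechip} together with the factor $\ell_l$); you flagged this, and it matches the paper's intended reading of the formula in (i).
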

\begin{proof}
With the assumption that $x_A=x_B$ in the matrix $M_G(x)$ whenever $A$ and $B$ belong to the same $k$-class, the eigenvalues of  $M_G(x)$
are linear polynomials over $k$ by Lemma \ref{lem:k-eigenvalue}. By the similar proof of Theorem \ref{thm:abbaExp}, we have $\Psi_j(x)=\frac{1}{o_jf}(\sum_{i=1}^n\chi_j^k(g_i)x_{g_i})^{o_jf}$.

Set $x_A=x_B$ in the $\Phi_{j_m}(x)$ and $\Phi_{j_n}$ whenever $A$ and $B$ belong to the same $k$-class, then $\Phi_{j_m}(x)=\frac{1}{f}\sum_{i=1}^n\chi_{j_m}^{\mathcal{P}}(g_i)x_{g_i}$ and $\Phi_{j_n}(x)=\frac{1}{f}\sum_{i=1}^n\chi_{j_n}^{\mathcal{P}}(g_i)x_{g_i}$ are polynomials over $k$ by Lemma \ref{lem:kvaluechip} and \ref{thm:abbaExp}. Since $\Psi_j(x)=\Phi_{j_1}(x)\cdots\Phi_{j_{o_j}}(x)$, it follows from (i) that $\chi^\mathcal{P}_{j_m}=\chi^\mathcal{P}_{j_n}=\frac{1}{o_j}\chi^k_j$.
\end{proof}

With the assumption that $x_A=x_B$ when $A$ and $B$ belongs to the same $k$-class, the multiplicity of $\frac{1}{f_{j_m}}\sum_{i=1}^n\chi^\mathcal{P}_{j_m}(g_i)x_{g_i} $ in $\Theta_G(x)=\prod_i^r\Psi_i(x)^{e_i}$ is $o_je_{j_m}f_{j_m}$ by Theorem \ref{thm:abbaExp} and Theorem \ref{thm:k-abbaext}. Since $e_{j_m}=f_{j_m}=\chi_{j_m}(1)$, by Theorem \ref{Mainthm} and \ref{cor:etchi1}, $D_\mathcal{P}(x)=(x-\chi_1(1)^2o_1)\cdots(x-\chi_n(1)^2o_n)$.  Let $P$ be a Sylow$_p$-subgroup of $G$ with $N=N_G(P)$. Let $\mathcal{P}'$ be the partition consisting of $k$-classes of $N$.
Similar to definition in section \ref{sec:degreepoly}, we define $p'$-degree polynomials $D_G^{k,p'}(x)$ of $D_\mathcal{P}(x)$ for $G$ and $p'$-degree polynomials $D_N^{k,p'}(x)$ of $D_{\mathcal{P}'}(x)$ for $N$. We call $D_G^{k,p'}(x)$ and $D_N^{k,p'}(x)$
the $p'$-degree polynomials with respect to $(G,\mathcal{P})$ and $(N,\mathcal{P}')$ respectively.
Similar to refined McKay's conjecture, it is natural to ask whether $D_G^{k,p'}(x)=D_G^{k,p'}(x)$ for $k$-classes of $G$ and $N$.

{\bf Conjecture.} Let $P$ be a Sylow$_p$-subgroup of $G$ with $N=N_G(P)$. Let $\mathcal{P}$ and $\mathcal{P'}$ be the good partitions $F$-conjugate classes of $G$ and $N$ respectively. Then $D_G^{k,p'}(x)=D_N^{k,p'}(x)$, where $D_G^{p'}(x)$ and $D_N^{p'}(x)$
are $p'$-degree polynomials with respect to $(G,\mathcal{P})$ and $(N,\mathcal{P}')$.

Let  $n$ be the exponent of $G$ and let $\mathbb{Q}_n$ be the cyclotomic field. Then $\mathbb{Q}_n$ is a splitting field of $G$ and its subgroups. Therefore, the Galois group $\mathscr{G} =
\mbox{Gal}(Q_n/Q)$ permutes the set Irr$_\mathbb{C}(G)$ of the irreducible complex characters of $G$. Let Irr$_{p'}(G)$ be the set of complex irreducible characters of $G$ with degree not divisible by $p$. Navarro proposed the following conjecture in \cite{N}.

{\bf Conjecture.} \cite{N} Let $G$ be a finite group of order $n$ and let $p$ be a prime.
Let $e$ be a nonnegative integer and let $\sigma \in \mbox{Gal}(Q_n/Q)$ be any Galois automorphism sending every $p'$-root of unity $\xi$ to $\xi^{p^e}$. Then $\sigma$ fixes the same number
of characters in Irr$_{p'}(G)$ as it does in Irr$_{p'}(N_G(P))$, where $P$ is a Sylow$_p$-subgroup of $G$.

 Let $\xi\in \mathbb{C}$  be a primitive $n$-th root of unity.  Then $\mathbb{Q}_n =\mathbb{Q}(\xi)$ is the
cyclotomic field.  We write $\xi=\omega\delta$,
where  the order of $\omega$ is $p^d$ ,the order of $\delta$ is $m$ , $n=p^dm$ and $(p,m)=1$.  Let $\mathcal{H}$ be the subgroup of $\mathscr{G}=\mbox{Gal}(\mathbb{Q}_n / \mathbb{Q}) $ consisting
of those elements $\sigma\in \mathscr{G}$ for which there is a nonnegative integer $e$ such that $\sigma(\lambda)=\lambda^{p^e}$
whenever
$\lambda$ is a $p'$-root of unity in $<\xi>$.
Let $\mathcal{K} = \{\tau\in \mathscr{G}|\tau(\delta) = \delta\}$.  Then $\mathcal{K}$ is isomorphic to
the group Gal$(\mathbb{Q}_{p^d} /\mathbb{Q})$ of order $\phi(p^d)$. Let $\mathcal{ J} = \{\tau\in \mathscr{G}|\tau(\omega)=\omega\}$.
 Then $\mathcal{ H} = \mathcal{K}\times <\sigma>$, where $\sigma\in \mathcal{J}$ such that $\sigma(\delta)=\delta^p$,
and the order of $\sigma$ is the order of $p$ modulo $m$.

\begin{proposition} Let $\mathbb{Q}^\mu$ be the fixed field of $\mu$ in $\mathbb{Q}_n$ for $\mu \in \mathscr{G}$. Let $N$ be the normalizer of some Sylow$_p$-subgroup of $G$. Let $\mathcal{P}$ and $\mathcal{P'}$ be the $\mathbb{Q}^\mu$-classes of $G$ and $N$ respectively. If  $\mu$ is a $p$-element contained in $\mathcal{K}$,
then the following are equivalent:
\begin{itemize}
  \item[(i)] {The automorphism $\mu$ fixes the same number
of characters in Irr$_{p'}(G)$ as it does in Irr$_{p'}(N)$;}
  \item[(ii)]{The polynomial $D_G^{p'}(x)$ is equal to the polynomial $D_N^{p'}(x)$, where $D_G^{p'}(x)$ and $D_N^{p'}(x)$
are $p'$-degree polynomials with respect to $(G,\mathcal{P})$ and $(N,\mathcal{P}')$.}
\end{itemize}

\end{proposition}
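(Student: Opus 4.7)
The plan is to reduce both conditions to combinatorial statements about $\mu$-fixed $p'$-degree characters by writing $D_G^{p'}(x)$ and $D_N^{p'}(x)$ as explicit products of linear factors in $\mathbb{F}_p[x]$. First I would combine Theorem~\ref{Mainthm} with Theorem~\ref{cor:etchi1} to rewrite $D_\mathcal{P}(x)=\prod_{i=1}^{n}(x-\chi_i(1)e_i)$, and then specialize to the $\mathbb{Q}^\mu$-class partition using the multiplicity identification carried out in the paragraph following Theorem~\ref{thm:k-abbaext}, which gives $\chi_i(1)e_i=\chi_i(1)^2 o_i$, where $o_i$ is the length of the corresponding $\langle\mu\rangle$-orbit on $\Irr_{\mathbb{C}}(G)$ via Brauer's permutation lemma. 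This yields
\[
D_\mathcal{P}(x) \;=\; \prod_{\mathcal{O}}\bigl(x-\chi_{\mathcal{O}}(1)^2\, o_{\mathcal{O}}\bigr),
\]
with the product ranging over $\langle\mu\rangle$-orbits $\mathcal{O}$ on $\Irr_{\mathbb{C}}(G)$, and an analogous formula holds for $N$ with $\mathcal{P}'$.

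Next I would use the hypothesis that $\mu\in\mathcal{K}$ is a $p$-element to observe that each orbit length $o_{\mathcal{O}}$ divides $|\mu|$ and is hence a $p$-power, so $p\mid\chi_{\mathcal{O}}(1)^2 o_{\mathcal{O}}$ unless $o_{\mathcal{O}}=1$ and $p\nmid\chi_{\mathcal{O}}(1)$, i.e.\ unless $\mathcal{O}=\{\chi\}$ for some $\chi\in\Irr_{p'}(G)^\mu$. Reducing modulo $p$ and stripping the factor $x^s$ therefore forces
\[
D_G^{p'}(x) \;=\; \prod_{\chi\in\Irr_{p'}(G)^\mu}\!\bigl(x-\overline{\chi(1)^2}\bigr),\qquad D_N^{p'}(x) \;=\; \prod_{\psi\in\Irr_{p'}(N)^\mu}\!\bigl(x-\overline{\psi(1)^2}\bigr),
\]
both viewed in $\mathbb{F}_p[x]$. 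The direction $\mathrm{(ii)}\Rightarrow\mathrm{(i)}$ drops out from these formulas by comparing degrees, since these degrees equal $|\Irr_{p'}(G)^\mu|$ and $|\Irr_{p'}(N)^\mu|$, respectively.

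The main obstacle is the converse $\mathrm{(i)}\Rightarrow\mathrm{(ii)}$, which asks that equality of cardinalities already forces the multisets $\{\overline{\chi(1)^2}\}$ to coincide on the two sides. My approach would be to exploit the hypothesis $\mu\in\mathcal{K}$: since $\mu$ fixes every $p'$-root of unity, every character value at a $p$-regular element already lies in $\mathbb{Q}^\mu$, so $\mu$-fixedness is controlled purely by the $p$-part of character values. Combined with the constraint $p\nmid\chi(1)$ and the Sylow-normaliser correspondence built into the $N_G(P)$ hypothesis, this should pin down each residue $\overline{\chi(1)^2}\in\mathbb{F}_p^\times$ in a way matched between $G$ and $N$, so that a Glauberman-- or Isaacs--Navarro-type pairing upgrades the equality of totals to equality of the individual multiplicities $M_i^\mu(G)=M_i^\mu(N)$. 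Verifying this matching of squared-degree multisets mod $p$ is the genuinely hard step and is where the deepest input, presumably of Isaacs--Navarro--McKay type, would enter.
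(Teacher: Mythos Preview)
Your derivation of the explicit product formula
\[
D_G^{p'}(x)=\prod_{\chi\in\Irr_{p'}(G)^\mu}\bigl(x-\overline{\chi(1)^2}\bigr)
\]
via Theorem~\ref{Mainthm}, Theorem~\ref{cor:etchi1}, and the observation that every $\langle\mu\rangle$-orbit length is a $p$-power is exactly the paper's argument, and your deduction of $(\mathrm{ii})\Rightarrow(\mathrm{i})$ by comparing degrees is likewise what the paper does (implicitly).

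For the converse $(\mathrm{i})\Rightarrow(\mathrm{ii})$ the paper contains no more than you do: after displaying the two product formulas it simply writes ``So (i) is equivalent to (ii)'' with no further justification. You are correct that this direction does \emph{not} follow from the formulas alone, since equality of $|\Irr_{p'}(G)^\mu|$ and $|\Irr_{p'}(N)^\mu|$ does not by itself force the multisets $\{\overline{\chi(1)^2}:\chi\in\Irr_{p'}(G)^\mu\}$ and $\{\overline{\psi(1)^2}:\psi\in\Irr_{p'}(N)^\mu\}$ to coincide in $\mathbb{F}_p$. Your instinct that closing this gap would require an Isaacs--Navarro--type refinement (matching degrees modulo $p$, not just counting) is sound; but note that invoking such input makes the implication conditional on a deep conjecture, whereas the proposition is stated unconditionally. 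In short, you have reproduced the paper's proof in full and have, in addition, correctly located a gap that the paper glosses over rather than resolves.
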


\begin{proof} Let $\Theta_G(x)=\prod_i^r\Phi_i^{e_i}(x)$ be the irreducible factorization over $\mathbb{Q}_n$. The Galois group Gal$(\mathbb{Q}_n/ \mathbb{Q}^\mu)$ is isomorphic to $<\mu>$ and it is of $p$-power order. Then Gal$(\mathbb{Q}_n/ \mathbb{Q}^\mu)$ acts on $\{\Phi_i(x)|i=1,\cdots,r\}$. Let $\Omega_1,\cdots,\Omega_t$ be the orbits of this action. Then
 the length of each orbits  is $1$ or $p^a$ for some $a\not=1$. By Theorem \ref{Mainthm} and \ref{cor:etchi1}, $D_\mathcal{P}(x)=(x-\chi_1(1)^2o_1)\cdots(x-\chi_n(1)^2o_n)$, where $o_i$ is the length of $\Omega_i$. So $$D_G^{p'}(x)=\prod_{\stackrel{p\nmid \chi_i(1)}{ o_i=1}}(x-\overline{\chi_i(1)^2o_i}).$$ We can get similar expression of $D_N^{p'}(x)$. So (i) is equivalent to (ii).
\end{proof}

Given a good partition $\mathcal{P}$ of Cl($G$). We have define $D_\mathcal{P}(x)$ in section \ref{sec:realization}. Similarly, we can define $p'$-degree polynomial $D_\mathcal{P}^p(x)$ of $D_\mathcal{P}(x)$. The following problem seems reasonable.

 {\bf Problem.} Let $P$ be a Sylow$p$-subgroup of $G$ with $N=N_G(P)$.
Given a good partition $\mathcal{P},\mathcal{P'}$ of  Cl($G$) and Cl($N$) respectively. If each partition class $\mathcal{P}_i$ of $\mathcal{P}$ satisfies that $\mathcal{P}_i\cap N$ is an union of some partition classes of $\mathcal{P'}$, then $D_\mathcal{P}^p(x)=D_\mathcal{P'}^p(x)$.

\subsubsection{References}
\begingroup

\renewcommand{\section}[2]{}%
\def\refname{{REFERENCES}}

\endgroup

\end{document}